\definecolor{stroke1}{HTML}{2574A9} 
\date{}
    \def\IfEmptyTF#1%
\relax\detokenize{#1}\relax%
\NewDocumentCommand{\mathOrText}{m}
{%
    \ensuremath{#1}\xspace%
}
\let\originalleft\left
\let\originalright\right
\renewcommand{\left}{\mathopen{}\mathclose\bgroup\originalleft}
\renewcommand{\right}{\aftergroup\egroup\originalright}
    \DeclareRobustCommand{\bfseries}%
    {%
        \not@math@alphabet\bfseries\mathbf%
        \fontseries\bfdefault\selectfont%
        \boldmath%
    }
\crefname{ineq}{inequality}{inequalities}
\crefname{term}{term}{terms}
\crefname{cond}{condition}{conditions}
\crefname{assume}{assumption}{assumptions}
\let\oldfootnote\footnote
\newlength{\spaceBeforeFootnote} 
\newlength{\spaceAfterFootnote}  
\RenewDocumentCommand{\footnote}{o o o m}%
{%
    \IfNoValueTF{#1}%
    {%
        \oldfootnote{#4}%
    }%
    {%
        \setlength{\spaceBeforeFootnote}{\IfEmptyTF{#1}{0}{#1} em}%
        \IfNoValueTF{#2}%
        {%
            \hspace*{\spaceBeforeFootnote}\oldfootnote{#4}%
        }%
        {%
            \setlength{\spaceAfterFootnote}{\IfEmptyTF{#2}{0}{#2} em}%
            \hspace*{\spaceBeforeFootnote}\IfNoValueTF{#3}{\oldfootnote{#4}}{\oldfootnote[#3]{#4}}\hspace*{\spaceAfterFootnote}%
        }%
    }%
}
\declaretheoremstyle
[
   	spaceabove = \topsep,
   	spacebelow = \topsep,
   	headfont = \bfseries,
   	headformat = \textcolor{stroke1}{$\blacktriangleright$} \NAME~\NUMBER \NOTE,
   	notefont = \bfseries,
   	notebraces = {(}{)},
   	bodyfont = \normalfont,
   	postheadspace = 0.5 em,
   	qed = \textcolor{stroke1}{\bfseries$\blacktriangleleft$},
]
{myTheoremStyle}
\declaretheorem
[
   	style = myTheoremStyle,
   	name = Lemma,
    sharenumber = conjecture,
]
{lemma}
\declaretheorem
[
   	style = myTheoremStyle,
   	name = Corollary,
    sharenumber = conjecture,
]
{corollary}
\declaretheorem
[
   	style = myTheoremStyle,
   	name = Theorem,
    sharenumber = conjecture,
]
{theorem}
\declaretheorem
[
   	style = myTheoremStyle,
   	name = Definition,
    sharenumber = conjecture,
]
{definition}
\NewDocumentCommand{\functionTemplate}{m m m m o}%
{%
    \IfNoValueTF{#5}%
    {%
        \mathOrText{#1\left#2{#4}\right#3}%
    }%
    {%
        \mathOrText{#1#5#2{#4}#5#3}%
    }%
}
\newcommand*{\leftBracketType}{(}
\newcommand*{\rightBracketType}{)}
\NewDocumentCommand{\createFunction}{m m o o}%
{%
    \renewcommand*{\leftBracketType}{\IfNoValueTF{#3}{(}{#3}}%
    \renewcommand*{\rightBracketType}{\IfNoValueTF{#4}{)}{#4}}%
    \NewDocumentCommand{#1}{o o}%
    {%
        \IfNoValueTF{##1}%
        {%
            \mathOrText{#2}%
        }%
        {%
            \functionTemplate{#2}{\leftBracketType}{\rightBracketType}{##1}[##2]%
        }%
    }%
}
\DeclareDocumentCommand{\probabilisticFunctionTemplate}{m m O{} o}
{%
    \functionTemplate{#1}%
    {\lbrack}%
    {\rbrack}%
    {#2\IfEmptyTF{#3}{}{\ \IfNoValueTF{#4}{\left}{#4}\vert\ \vphantom{#2}#3\IfNoValueTF{#4}{\right.}{}}}%
    [#4]%
}
\newcommand*{\N}{\mathOrText{\mathds{N}}}
\newcommand*{\Z}{\mathOrText{\mathds{Z}}}
\newcommand*{\R}{\mathOrText{\mathds{R}}}
\newcommand*{\indicatorFunctionSymbol}{\mathds{1}}
\RenewDocumentCommand{\Pr}{m O{} o}%
{%
    \probabilisticFunctionTemplate{\mathrm{Pr}}{#1}[#2][#3]%
}
\NewDocumentCommand{\E}{m O{} o}%
{%
    \probabilisticFunctionTemplate{\mathds{E}}{#1}[#2][#3]%
}
\NewDocumentCommand{\Var}{m O{} o}%
{%
    \probabilisticFunctionTemplate{\mathrm{Var}}{#1}[#2][#3]%
}
\DeclareDocumentCommand{\bigO}{m o}%
{%
    \functionTemplate{\mathrm{O}}{(}{)}{#1}[#2]%
}
\DeclareDocumentCommand{\smallO}{m o}%
{%
    \functionTemplate{\mathrm{o}}{(}{)}{#1}[#2]%
}
\DeclareDocumentCommand{\bigTheta}{m o}%
{%
    \functionTemplate{\upTheta}{(}{)}{#1}[#2]%
}
\DeclareDocumentCommand{\bigOmega}{m o}%
{%
    \functionTemplate{\upOmega}{(}{)}{#1}[#2]%
}
\DeclareDocumentCommand{\smallOmega}{m o}%
{%
    \functionTemplate{\upomega}{(}{)}{#1}[#2]%
}
\DeclareDocumentCommand{\eulerE}{o}%
{%
    \mathOrText{\mathrm{e}\IfNoValueTF{#1}{}{^{#1}}}%
}
\DeclareDocumentCommand{\poly}{m o}%
{%
    \functionTemplate{\mathrm{poly}}{(}{)}{#1}[#2]%
}
\createFunction{\id}{\mathrm{id}}
\NewDocumentCommand{\ind}{m o o}%
{%
    \IfNoValueTF{#2}%
    {%
        \mathOrText{\indicatorFunctionSymbol_{#1}}%
    }%
    {%
        \functionTemplate{\indicatorFunctionSymbol_{#1}}{(}{)}{#2}[#3]%
    }%
}
\DeclareDocumentCommand{\dom}{m o}%
{%
    \functionTemplate{\mathrm{dom}}{(}{)}{#1}[#2]%
}
\DeclareDocumentCommand{\rng}{m o}%
{%
    \functionTemplate{\mathrm{rng}}{(}{)}{#1}[#2]%
}
\DeclareDocumentCommand{\d}{o}%
{%
    \mathrm{d}\IfNoValueTF{#1}{}{^{#1}}%
}
\DeclareDocumentCommand{\set}{m m o}%
{%
    \mathOrText{\IfNoValueTF{#3}{\left}{#3}\{#1\ \IfNoValueTF{#3}{\left}{#3}\vert\ \vphantom{#1}#2\IfNoValueTF{#3}{\right.}{}\IfNoValueTF{#3}{\right}{#3}\}}%
}
\newcommand*{\indicator}[1]{\mathOrText{\mathds{1}_{#1}}}
\DeclareDocumentCommand{\randomProcess}{m o}
{%
    \mathOrText{X^{(#1)}\IfNoValueTF{#2}{}{_{#2}}}%
}
\DeclareDocumentCommand{\transformedProcess}{o}
{%
    \mathOrText{Y\IfNoValueTF{#1}{}{_{#1}}}%
}
\DeclareDocumentCommand{\filtration}{o}
{%
    \mathOrText{\mathcal{F}\IfNoValueTF{#1}{}{_{#1}}}%
}
\newcommand*{\timePoint}{\mathOrText{t}}
\newcommand*{\defeq}{\mathOrText{\coloneqq}}
\newcommand*{\hyperbolicExponent}{\mathOrText{\gamma}}
\newcommand*{\numberOfVertices}{\mathOrText{n}}
\newcommand*{\numberOfVerticesShifted}{\mathOrText{n'}}
\newcommand*{\infectionRate}{\mathOrText{\lambda}}
\newcommand*{\infectionConstant}{\mathOrText{c}}
\newcommand*{\infectionConstantMinusOne}{\mathOrText{c'}}
\newcommand*{\deimmunizationRate}{\mathOrText{\varrho}}
\newcommand*{\contactProcess}{\mathOrText{C}}
\newcommand*{\contactProcessProjection}{\mathOrText{C'}}
\newcommand*{\infectedDiscrete}[1]{\mathOrText{I_{\timeContinuous{#1}}}}
\newcommand*{\susceptibleDiscrete}[1]{\mathOrText{S_{\timeContinuous{#1}}}}
\newcommand*{\recoveredDiscrete}[1]{\mathOrText{R_{\timeContinuous{#1}}}}
\newcommand*{\susceptibleDiscreteShifted}[1]{\mathOrText{P_{\timeContinuous{#1}}}}
\newcommand*{\infectedEquilibrium}{\mathOrText{I^*}}
\newcommand*{\timeDiscrete}{\mathOrText{t}}
\newcommand*{\timeContinuous}[1]{\mathOrText{\tau_{#1}}}
\newcommand*{\eventInfection}[1]{\mathOrText{E_{si,#1}}}
\newcommand*{\eventInfectionOutside}[1]{\mathOrText{E_{o,#1}}}
\newcommand*{\eventRecover}[1]{\mathOrText{E_{ir,#1}}}
\newcommand*{\eventSusceptible}[1]{\mathOrText{E_{rs,#1}}}
\newcommand*{\rateInfection}[1]{\mathOrText{r_{si,#1}}}
\newcommand*{\rateInfectionOutside}[1]{\mathOrText{r_{o,#1}}}
\newcommand*{\rateRecover}[1]{\mathOrText{r_{ir,#1}}}
\newcommand*{\rateSusceptible}[1]{\mathOrText{r_{rs,#1}}}
\newcommand*{\rateTotal}[1]{\mathOrText{r'_{#1}}}
\newcommand*{\rateExtra}[1]{\mathOrText{r_{c,#1}}}
\newcommand*{\degree}{\mathOrText{d}}
\newcommand*{\degreeGap}{\mathOrText{\varepsilon_d}}
\newcommand*{\spectralGap}{\mathOrText{\delta}}
\newcommand*{\rateTotalOutside}[1]{\mathOrText{r_{#1}}}
\newcommand*{\probabilityInfection}[1]{\mathOrText{p_{si,#1}}}
\newcommand*{\probabilityInfectionOutside}[1]{\mathOrText{p_{o,#1}}}
\newcommand*{\probabilityRecover}[1]{\mathOrText{p_{ir,#1}}}
\newcommand*{\probabilitySusceptible}[1]{\mathOrText{p_{rs,#1}}}
\newcommand*{\driftRecover}[1]{\mathOrText{F_{ir,#1}}}
\newcommand*{\driftSusceptible}[1]{\mathOrText{F_{rs,#1}}}
\newcommand*{\driftInfection}[1]{\mathOrText{F_{si,#1}}}
\DeclareDocumentCommand{\lyapunovHelper}{o}
{%
    \mathOrText{f\IfNoValueTF{#1}{}{(#1)}}%
}
\DeclareDocumentCommand{\lyapunovFunction}{m o}
{%
    \mathOrText{F\IfNoValueTF{#2}{}{(#2)}}%
}
\newcommand*{\lyapunovConstant}{\mathOrText{\alpha}}
\newcommand*{\lyapunovParameter}{\mathOrText{\beta}}
\newcommand*{\potentialSIRSClique}[2]{\mathOrText{F_{#2}}}
\newcommand*{\driftSIRSClique}[2]{\mathOrText{D_{#2}}}
\DeclareDocumentCommand{\potentialFunctionIMinusR}{o}
{%
    \mathOrText{H\IfNoValueTF{#1}{}{(#1)}}%
}
\newcommand*{\potentialIMinusR}[1]{\mathOrText{H_{#1}}}
\newcommand*{\potentialIMinusRAlone}{\mathOrText{H}}
\newcommand*{\filtrationDiscrete}[1]{\mathOrText{\filtration_{\timeContinuous{#1}}}}
\newcommand*{\filtrationContinuous}[1]{\mathOrText{\filtration_{#1}}}
\newcommand*{\infectedContinuous}[1]{\mathOrText{I_{#1}}}
\newcommand*{\susceptibleContinuous}[1]{\mathOrText{S_{#1}}}
\newcommand*{\recoveredContinuous}[1]{\mathOrText{R_{#1}}}
\newcommand*{\infectedSet}[1]{\mathOrText{I_{#1}'}}
\newcommand*{\susceptibleSet}[1]{\mathOrText{S_{#1}'}}
\newcommand*{\recoveredSet}[1]{\mathOrText{R_{#1}'}}
\newcommand*{\erdosGraph}{Erdős--Rényi graph\xspace}
\newcommand*{\erdosGraphs}{Erdős--Rényi graphs\xspace}
\DeclareDocumentCommand{\vol}{m}
{%
    \mathOrText{\text{vol}(#1)}%
}
\DeclareDocumentCommand{\edges}{m m}
{%
    \mathOrText{E(#1,#2)}%
}
\title{Analysis of the survival time of the SIRS~process via expansion}
\author{
    Tobias Friedrich$^{*}$ \and Andreas Göbel$^{*}$ \and Nicolas Klodt$^{*}$ \and Martin~S. Krejca$^{\dagger}$ \and Marcus Pappik$^{*}$
}
\begin{document}

\maketitle

\vspace*{-2.5\baselineskip}
\begin{abstract}
We study the SIRS process, a continuous-time Markov chain modeling the spread of infections on graphs.
In this process, vertices are either susceptible, infected, or recovered.
Each infected vertex becomes recovered at rate 1 and infects each of its susceptible neighbors independently at rate~$\infectionRate$, and each recovered vertex becomes susceptible at a rate~$\deimmunizationRate$, which we assume to be independent of the graph size.
A central quantity of the SIRS process is the time until no vertex is infected, known as the \emph{survival time}.
The survival time of the SIRS process is studied extensively in a variety of contexts.
Surprisingly though, to the best of our knowledge, no rigorous theoretical results exist so far.
This is even more surprising given that for the related SIS process, mathematical analysis began in the 70s and continues to this day.

We address this imbalance by conducting the first theoretical analyses of the SIRS process on various graph classes via their expansion properties.
Our analyses assume that the graphs start with at least one infected vertex and no recovered vertices.
Our first result considers stars, which have poor expansion.
We prove that the expected survival time of the SIRS process on stars is at most polynomial in the graph size for \emph{any} value of $\infectionRate$.
This behavior is fundamentally different from the SIS process, where the expected survival time is exponential already for small infection rates.
Due to this property, for the SIS process, stars constitute an important sub-structure for proving an expected exponential survival time of more complicated graphs.
For the SIRS process, this argument is not sufficient.

Our main result is an exponential lower bound of the expected survival time of the SIRS process on expander graphs.
Specifically, we show that on expander graphs~$G$ with~$n$ vertices, degree close to~$\degree$, and sufficiently small spectral expansion, the SIRS process has expected survival time at least exponential in~$n$ when $\infectionRate \geq c/\degree$ for a constant $c > 1$.
This result is complemented by established results for the SIS process, which imply that the expected survival time of the SIRS process is at most logarithmic in~$n$ when $\infectionRate \leq c/\degree$ for a constant $c < 1$.
Combined, our result shows an almost-tight threshold behavior of the expected survival time of the SIRS process on expander graphs.
Additionally, our result holds even if~$G$ is a subgraph.
This allows, for the SIRS process, the use of expanders as sub-structures for lower bounds, similar to stars in the SIS process.
Notably, our result implies an almost-tight threshold for \erdosGraphs and a regime of exponential survival time for hyperbolic random graphs, one of the most popular graph models, as it incorporates many properties found in real-world networks.
The proof of our main result draws inspiration from Lyapunov functions used in mean-field theory to devise a two-dimensional potential function and applying a negative-drift theorem to show that the expected survival time is exponential.
\end{abstract}

\newpage

\section{Introduction}
\label{sec:introduction}
In the domain of modeling infectious diseases, a vast body of literature studying various stochastic processes on graphs exists (see, for example, the extensive survey by~\textcite{Pastor-SatorrasCVMV15Survey}). In this article, we focus on the SIRS process, a continuous-time Markov chain where each vertex is either susceptible, infected, or recovered. Each infected vertex becomes recovered at rate~$1$ and infects each of its susceptible neighbors independently at an \emph{infection rate}~$\infectionRate$, while each recovered vertex becomes susceptible at a \emph{deimmunization rate}~$\deimmunizationRate$.

A question central to understanding the SIRS process is \emph{how long} it takes until no vertex in the graph is infected, known as the \emph{survival time}\footnote{Sometimes also referred to as the \emph{extinction} time.} of the process. Due to relevance of the SIRS process, its survival time has been studied extensively. This includes empirical results~\cite{Wang_2017,PhysRevLett.86.2909,ferreira2016collective}, mean-field approaches~\cite{Bancal10}, and results that consider deterministic variants of the process~\cite{saif2019epidemic}. However, surprisingly, to the best of our knowledge, no rigorous, theoretical results exist for the SIRS process in the literature.

This lack of theoretical results for the SIRS process stands in stark contrast to the plethora of theoretical results for a similar but slightly simpler process, known as the SIS process.
In the SIS process, each vertex is either susceptible or infected.
Each infected vertex becomes susceptible at rate~$1$ and infects each of its neighbors independently at an \emph{infection rate}~$\infectionRate$.
Thus, with a grain of salt, the SIS process can be viewed as a special case of the SIRS process in which recovered vertices turn immediately susceptible (that is, the deimmunization rate~$\deimmunizationRate$ is infinite).
The survival time of the SIS process is well understood on a variety of graphs.
Early results on the SIS process consider its survival time on $\Z^d$~\cite{Harris74} and on infinite $d$-regular trees~\cite{Pemantle1992TheCP,10.2307/2959578,Stacey96}, while recent breakthroughs characterize the survival time on Galton--Watson trees~\cite{huang2020contact,bhamidi2021survival,NamNS22SISinfinite}.
On finite structures, the results of \textcite{NamNS22SISinfinite} consider \erdosGraphs, while the SIS process has also been studied on scale-free graphs\footnote{Generated by the preferential-attachment model~\cite{barabasi1999emergence}.}~\cite{berger2005spread,BorgsCGS10Antidote}.
These results rely on the survival time on simple subgraphs, such as stars.
Further, \textcite{ganesh2005effect} connect the survival time to the spectral radius and the isoperimetric constant of the host graph, which immediately translates to a variety of simple~graphs.

We note that, for the same graph, the survival time---which is a random variable---of a SIS process is an upper bound for the survival time of a SIRS process when starting with identical configurations, as the two processes can be coupled such that an infected vertex in the latter is also always infected in the former.
This allows to carry over \emph{some} results from the SIS to the SIRS process.
However, our knowledge about the SIRS process remains in a very unsatisfactory state for multiple reasons.
First, we only have upper bounds on the survival time for the SIRS process, which begs the question for how tight they are.
And second, far more importantly, the survival time in the SIS process for a graph~$G$ is a lower bound for any graph~$H$ containing~$G$ as a subgraph, as adding more vertices does not reduce the number of infected vertices at any point in time.
In contrast, it is not known whether the SIRS process also has this property.
Adding more vertices to a graph in the SIRS process can lead to some vertices being earlier infected and thus potentially earlier recovered, which in turn can block an infection that would have occurred otherwise.
Thus, it is not straightforward to generalize results for the SIRS process to supergraphs.

\paragraph{Our contribution.}
We conduct the first rigorous, theoretical study of the expected survival time of the SIRS process on a large variety of graph classes, most prominently expanders.
In all of our results, we assume that the deimmunization rate is independent of the graph size and that the process starts with at least one infected vertex and no recovered vertices.
Our results showcase the similarities and the differences between the SIS and the SIRS process, highlighting the impact of the state \emph{recovered}.
Furthermore, for our lower bounds, we prove that our results carry over to supergraphs of the graphs we analyze.
This makes our results applicable to a great number of different graph classes.

More specifically, in \Cref{sec:stars}, we show that the expected survival time of the SIRS process on stars is polynomial,\footnote[-0.1]{In the number of vertices.} regardless of the infection rate (\Cref{lem:starSurvival}).
This strongly contrasts the SIS process, where the survival time is superpolynomial for already very small infection rates.
This shows that recovered vertices can have a huge impact on the survival time.
The reason for this drastic difference in the expected survival time between both processes is that the star is only connected through a single, central vertex.
Thus, if the center is recovered, the infection only survives if not all leaves become recovered during this time interval. The latter event does not have sufficiently high probability of occurring for the infection to survive superpolynomially long.

In \Cref{sec:SIRS_clique}, we complement these findings by proving that the expected survival time of the SIRS process on expanders is at least exponential if the infection rate is greater than the inverse of the expander's average degree (\Cref{thm:cliqueSIRS}).
This result is very similar for the SIS process~\cite{ganesh2005effect}.
In contrast to stars, expanders have many edges between arbitrary subsets of vertices.
Thus, if the number of infected vertices is sufficiently high, there exist enough edges between all susceptible and all infected vertices, regardless of the number of (remaining) recovered vertices.
These edges give the process a high probability to not decrease the number of infected vertices, which leads to the overall long expected survival time.

Since we prove our result for expanders to carry over to supergraphs, this result implies respective expected survival times for other well known graph classes, such as \erdosGraphs (\Cref{cor:ER_graphs}) and hyperbolic random graphs (\Cref{cor:SIRS_on_HRGs}), which we discuss in \Cref{sec:graph_classes}.
Combined, our results emphasize that while the SIRS and SIS process behave very differently on some of their subgraphs (namely stars), they have similar behavior if the graph is sufficiently connected.
In the following, we discuss our results in more detail.

\subsection{Expected survival time on stars}

For stars, we prove the following upper bound on the expected survival time of the SIRS process.

\begin{restatable*}{theorem}{StarSurvival}
    \label{lem:starSurvival}
    Let $G$ be a star with $\numberOfVertices \in \N_{>0}$ leaves, and let \contactProcess be a SIRS process on $G$ with infection rate \infectionRate and with deimmunization rate \deimmunizationRate. Let $T$ be the survival time of \contactProcess.
    Then for sufficiently large~$n$, it holds that $\E{T} \leq \big(\!\ln(\numberOfVertices)+2\big) (4\numberOfVertices^{ \deimmunizationRate}+1) \in \bigO{\numberOfVertices^{ \deimmunizationRate} \ln(\numberOfVertices)}$.
\end{restatable*}

Note that this bound is independent of~\infectionRate and that it results in a polynomial expected survival time as long as~\deimmunizationRate is at most constant with respect to~\numberOfVertices. Although we only prove an upper bound, our bound matches, up to a logarithmic factor, empirical investigations of the star~\cite{ferreira2016collective}, suggesting that our bound is almost tight.
Note that these experimental results consider the infection rate~\infectionRate to be constant in terms of \numberOfVertices, while our results apply for any \infectionRate.
Our results also show a behavior similar to the deterministic variant of the process considered by \textcite{saif2019epidemic}.

The analysis mainly relies on the method of investigating independent phases in which the center is not infected, bounding the probability of the infection process dying out during that time, as is common~\cite{BorgsCGS10Antidote,berger2005spread}.
A phase lasts at most until all leaves triggered their recovery at least once, which occurs in expectation after a time of about $\ln(\numberOfVertices)$.
Thus, if the center just recovered, it needs to become susceptible more quickly than that bound, as otherwise all leaves are recovered.
Since deimmunization triggers at rate~\deimmunizationRate, the probability that the center does not become susceptible in this time interval is about $\eulerE^{-\deimmunizationRate \ln \numberOfVertices}$, resulting in a probability of about $\numberOfVertices^{-\deimmunizationRate}$ that the infection dies out. Since these phases are independent, the infection process survives, in expectation, about $\numberOfVertices^{\deimmunizationRate}$ of these trials, each lasting about~$\ln(\numberOfVertices)$ time in expectation.

Note that the deimmunization rate and the state \emph{recovered} are important for this argument to hold.
Without this additional state, that is, in the SIS process, it is quite likely that the center becomes quickly reinfected before all leaves are not infected, which leads to an exponential expected survival time once~$\infectionRate \geq \numberOfVertices^{-1/2 + \varepsilon}$ in this setting~\cite{ganesh2005effect}, for all positive constants~$\varepsilon$.

\subsection{Expected survival time on expanders}

Before we state our main result, we formally introduce the notion of expansion we use for our results.
To this end, let $G = (V, E)$ be a graph with~$\numberOfVertices$ vertices $\{v_i\}_{i = 1}^{n}$, and let~$L$ be its normalized Laplacian, which is defined as
\begin{equation*}
    L_{i,j} =
    \begin{cases}
          1 & \text{if}\ i=j, \\
          -\frac{1}{\sqrt{\mathrm{deg}(v_i)\mathrm{deg}(v_j)}} & \text{if there is an edge between $v_i$ and $v_j$}, \\
          0 &\text{otherwise}.
    \end{cases}
\end{equation*}
Let~$L$ have eigenvalues $\lambda_1 \leq ... \leq \lambda_\numberOfVertices$.
The \emph{spectral expansion} of~$L$ is defined as $\spectralGap = \max_{i \geq 2}|1 - \lambda_i|$.
We call~$G$ an $(\numberOfVertices, (1\pm \degreeGap)\degree, \spectralGap)$-expander if and only if it has $\numberOfVertices$ vertices, a spectral expansion of $\spectralGap$ and only vertices with degree between $(1- \degreeGap)\degree$ and $(1+\degreeGap)\degree$.

As noted above, in contrast to stars, expanders feature many edges between arbitrary subsets of vertices. The key property we require for our results from $(\numberOfVertices, (1\pm \degreeGap)\degree, \spectralGap)$-expanders is that the number of edges between any two sets~$X$ and~$Y$ of vertices is close to $\frac{\degree}{\numberOfVertices} |X| |Y|$.

Our results hold for any expander~$G'$ that is subgraph of a graph~$G$ on which we analyze the SIRS process~$C$.
More formally, we define the \emph{projection}~$C'$ of~$C$ onto~$G'$ to be the process on~$G'$ such that, at each point in time, each vertex of~$G'$ in~$C'$ is in the same state as it is in~$C$. The survival time of a projected process is the first point in time that the projected process has no infected vertices. Given these definitions, our main result follows.

\begin{restatable*}{theorem}{SIRSClique}
    \label{thm:cliqueSIRS}
    Let $G$ be a graph, and let $G'$ be a subgraph of $G$ that is an $(\numberOfVertices,(1 \pm \degreeGap)\degree,\spectralGap)$-expander. Let $\degree\rightarrow\infty$ and $\spectralGap,\degreeGap\rightarrow0$ as $\numberOfVertices\rightarrow\infty$. Let \contactProcess be the SIRS process on~$G$ with infection rate~$\infectionRate$ and with constant deimmunization rate \deimmunizationRate. Further, let \contactProcess start with at least one infected vertex in $G'$ and no recovered vertices in~$G'$. Last, let~\contactProcessProjection be the projection of \contactProcess onto~$G'$, and let~$T$ be the survival time of~\contactProcessProjection.
    If $\infectionRate \geq \frac{\infectionConstant}{\degree}$ for a constant $\infectionConstant \in \R_{>1}$, then for sufficiently large \numberOfVertices, it holds that $\E{T} = 2^{\bigOmega{\numberOfVertices}}$.
\end{restatable*}

We note that \Cref{thm:cliqueSIRS} is almost tight with respect to the range of $\infectionRate$. \textcite[Theorem~3.1]{ganesh2005effect} show that the survival time of the SIS process is at most logarithmic in $n$ when the spectral radius of a graph is less than $1/\infectionRate$. Note that the spectral radius of a graph is upper bounded by the maximum degree of the graph. This results in a logarithmic expected survival time of the process on $(\numberOfVertices,(1 \pm \degreeGap)\degree,\spectralGap)$-expanders when $\infectionRate \leq \frac{1-\varepsilon}{d}$, for some constant~$\varepsilon$. Recall our discussion earlier in the introduction that the expected survival time of the SIS process is an upper bound to the expected survival time of the SIRS process. Hence, the expected survival time of the SIRS process for $\infectionRate\leq \frac{1-\varepsilon}{d}$ is at most logarithmic in~$n$ on $(\numberOfVertices,(1 \pm \degreeGap)\degree,\spectralGap)$-expanders.

The proof of \Cref{thm:cliqueSIRS} consists of two main parts. First, we prove that a linear number of vertices in~$G'$ becomes infected. Then, we show that the number of infected vertices stays linear for an expected exponential amount of time. For both parts, we make use of potential functions, which map the configuration of the process to a single real number that allows us to quantify how likely the process is to die out. In order to get the result on the projection of the process, we use that the influence of $G \setminus G'$ only increases the rate at which vertices in $G'$ get infected. In the considered configurations, this rate increase only helps the process to get into the desired region of the potential.

In more detail, the first part shows that the process reaches a configuration with at least $\varepsilon \numberOfVertices$ infected vertices with probability at least $\frac{1}{\numberOfVertices+2}$ (\Cref{lem:farFromEdgeSIRS}). Note that if this event does not occur, then the process might die out fast. For bounding the probability of this event, we use a fairly simple potential $\potentialIMinusR{\timeDiscrete}$ expressing the difference in the number of infected vertices minus $\varepsilon$ times the recovered vertices.
We show that $\potentialIMinusR{\timeDiscrete}$ is a submartigale. Applying the optional-stopping theorem to~$\potentialIMinusR{\timeDiscrete}$ concludes this first part of the proof.

In the second part, we define a more advanced potential function~\potentialSIRSClique{}{} (\Cref{def:lyapunovPotential}), which gets large when the number of infected vertices gets small. We show that there is a region of the potential in which the process is a strict supermartingale with a constant negative drift (\Cref{lem:constantDriftSIRS}). We show that in this region, higher infection rates decrease the drift (\Cref{lem:increasedInfection}). We then use the expansion properties of the base graph that guarantee that the infected vertices always have enough susceptible neighbors such that new vertices get infected and the potential decreases in expectation. This allows us to apply a concentration bound by \textcite{oliveto2011simplified} (\Cref{pre:NegativeDrift}) for strict supermartingales, known as \emph{negative-drift theorem}, based on an intricate theorem by \textcite{Hajek82HittingTime}.
The negative-drift theorem results in the lower exponential bound of the expected survival time.

Our definition of~\potentialSIRSClique{}{} is based on a Lyapunov function~\lyapunovHelper used by \textcite{korobeinikov2002lyapunov}, which they utilize in order to derive results on the global stability of the SIRS process via mean-field theory. The mean-field theory assumes a fully mixed graph, which roughly corresponds to a clique for our process. In order to show global stability, the authors show a negative drift towards an equilibrium point. However, this drift is~$0$ for some configurations in our setting, which is not small enough to apply the drift theorem. We adjust their function appropriately to create a region in the potential that has a sufficiently large negative drift. We also alter the analysis of the function to work in the stochastic process and on expander graphs.

\subsection{Expected survival time on special graph classes}

The generality of \Cref{thm:cliqueSIRS} makes it applicable to various other interesting graph classes.
The only requirement is that they contain an expander as a subgraph.
We illustrate this generality for two important random-graph models, namely, Erd\H{o}s--Rényi and hyperbolic random graphs.

\subsubsection*{\erdosGraphs}

The first random-graph model we are interested in is~$G_{\numberOfVertices,p}$---the classical random-graph model of \textcite{erdHos1959random}. The expansion properties of this model have been previously studied in literature. As \textcite[Theorem~1.2]{coja2007laplacian} shows, \erdosGraphs have a very small spectral expansion. Furthermore, due to Chernoff bounds, the vertex degrees in \erdosGraphs are tightly distributed around their average degree~$\degree$. Therefore, \erdosGraphs fulfill, with high probability, our definition of an $(\numberOfVertices,(1 \pm \degreeGap)\degree,\spectralGap)$-expander. This leads to the following corollary of \Cref{thm:cliqueSIRS}.

\begin{restatable*}{corollary}{ERGraphs}\label{cor:ER_graphs}
Let $G \sim G_{\numberOfVertices,p}$ be an \erdosGraph with $(\numberOfVertices-1)p \in \smallOmega{\ln\numberOfVertices}$. Consider the SIRS process $\contactProcess$ on $G$ with constant deimmunization rate \deimmunizationRate, and let $T$ be the survival time of $C$ when the process starts with at least one infected vertex.
If $\infectionRate\geq \frac{\infectionConstant}{\degree}$ for a constant $c \in \R_{>1}$, then $\E{T} = 2^{\bigOmega{\numberOfVertices}}$ asymptotically almost surely with respect to~$G$. If $\infectionRate\leq \frac{\infectionConstant}{\degree}$ for a constant $c \in (0, 1)$, then $\E{T} \in \bigO{\log \numberOfVertices}$ asymptotically almost surely with respect to~$G$.
\end{restatable*}

Comparing \Cref{cor:ER_graphs} with the respective result for the SIS process (cf. \cite[Theorem~$5.5$]{ganesh2005effect}) shows that the two processes, SIS and SIRS, behave similarly on \erdosGraphs.

\subsubsection*{Hyperbolic random graphs}\label{sec:Hyperbolic}

Many properties of complex real-world networks, such as the internet and social networks, are captured by hyperbolic random graphs~\cite{boguna2010sustaining,verbeek2014metric}. For this reason, since their introduction~\cite{KPKVB10}, hyperbolic random graphs are a very popular model in network theory that has been extensively studied (e.g.~\cite{gugelmann2012random,bode2015largest,muller2019diameter}). Therefore, hyperbolic random graphs provide a highly relevant structure for studying the survival time of the SIRS process. The exact definition of the model is not required to understand our results, hence we refer the reader to the work by \textcite{KPKVB10} for a formal definition.

The key parameter~$\hyperbolicExponent$ of a hyperbolic random graph controls the power-law exponent that the degree distribution follows. The interesting parameter range is $\hyperbolicExponent \in (2,3)$. Beyond this range, the graphs generated from this model lose key properties present in real-world networks. As the model commonly generates some very small disconnected components of a few vertices, the usual approach in literature is to focus on the giant component of the graph. Two of these properties are key for our results: the existence of a polynomial-sized clique as a subgraph and a polylogarithmic bound on the diameter of the giant component. Using these two properties, we identify the following parameter regime for the exponential expected survival time of the SIRS process on hyperbolic random graphs.

\begin{restatable*}{corollary}{HRGGraphs}
    \label{cor:SIRS_on_HRGs}
Let $G$ be a hyperbolic random graph with $\numberOfVertices$ vertices that follows a power-law degree distribution with exponent $\hyperbolicExponent \in (2,3)$, and let~$\contactProcess$ be the SIRS process on~$G$ with infection rate~$\infectionRate$ and with constant deimmunization rate~\deimmunizationRate. Further, let $\contactProcess$ start with at least one infected vertex in the giant component and no recovered vertices, and let $T$ be the survival time of $\contactProcess$.
Then there exists a constant $\infectionConstant \in \R_{>0}$ such that if $\infectionRate \geq \infectionConstant \numberOfVertices^{(\hyperbolicExponent-3)/2}$, then $\E{T} = 2^{\bigOmega{\numberOfVertices^{(3-\hyperbolicExponent)/2}}}$.
\end{restatable*}

\subsection{Outlook}\label{sec:intro_outlook}

Although our results cover already a great range of interesting graph classes, this article is just the first step to understanding the SIRS process more thoroughly.
Our analyses pose exciting new challenges for different scenarios, which we briefly delineate in the following.

Our upper bound of the expected survival time on stars (\Cref{lem:starSurvival}) is off from empirical results~\cite{ferreira2016collective} by a logarithmic factor.
This shows that there is potential for improvement in the analysis.
Ideally, proving a matching lower bound would answer the question for the exact expected survival time.

Combined, our results for stars (\Cref{lem:starSurvival}) and expanders (\Cref{thm:cliqueSIRS}) show that adding edges to a graph leads, eventually, from a polynomial expected survival time to an exponential.
However, it is not clear so far when this transition happens.
An interesting next step is to look into connected stars instead of single stars.
Connected stars appear as subgraphs in important real-world network models, such as the Chung--Lu~\cite{ChungL03ChungLuGraphs} or the preferential-attachment model~\cite{barabasi1999emergence}, motivating this research question.

With respect to expanders with vertex degrees concentrated around~$d$, our result (\Cref{thm:cliqueSIRS}) implies that~$1/d$ is the threshold for the infection rate~\infectionRate at which the expected survival time transitions from logarithmic to exponential.
However, our bounds require~\infectionRate to be bounded away from~$1/d$ by a constant.
It is not clear, given a value $\varepsilon \in \smallO{1}$, what happens if $\infectionRate = \frac{1 \pm \varepsilon}{d}$.
A more detailed analysis could provide insights into how rapidly the transition at the threshold occurs.

A different extension of our results is to consider deimmunization rates that are dependent on the graph size.
Comparing the behavior of the SIS and the SIRS process on stars suggests that an increased deimmunization rate leads to far longer expected survival times.
Thus, an interesting question is whether the survival time exhibits a threshold behavior with respect to the deimmunization rate.

Multi-dimensional potentials, as the one we use for the SIRS process on expanders, are rare in the analysis of stopping times of stochastic processes. Our approach draws inspiration from Lyapunov stability to devise a potential function for the stochastic process under study and then applies drift theory to convert this into a rigorous proof. Lyapunov functions are used in mean-field theory to show stable points of dynamical systems~\cite{lyapunov1992general}, and epidemic processes constitute only a glimpse of their applicability. We believe that our approach might inspire further rigorous results of determining stopping times of other stochastic processes, not limited to epidemic models.

\section{Preliminaries}
\label{sec:preliminaries}
We study the SIRS process, which is a continuous-time Markov chain on graphs in which the vertices change between different states, following events triggered by Poisson processes. We analyze how this process behaves asymptotically in the number of vertices \numberOfVertices of the graph. Especially, when we use big-O notation or refer to variables as constants, this is with respect to~$\numberOfVertices$.
When we use big-O notation inside of a term in a relation, this means that there exists a function from the big-O expression such that the relation holds, for example, the equation $a = 2^{\bigOmega{n}}$ means that there exists a function $f \in \bigOmega{n}$ such that $a = 2^{f(n)}$ holds.
If not stated otherwise, all variables we consider may depend on~$\numberOfVertices$. Whenever we talk about Poisson processes, we refer to one-dimensional Poisson point processes that output a random subset of the non-negative real numbers.

We first define our infection models and some related terms that we use throughout the paper. We then state the probabilistic tools we use in the proofs.

\subsection{Infection Processes}

Let $G=(V,E)$ be a graph with vertex set $V$ and edge set $E$. Further, let $\infectionRate,\deimmunizationRate \in \R_{>0}$. In the SIRS process, for each edge $e \in E$, we define a Poisson process $M_e$ with parameter $\infectionRate$, and for each vertex $v \in V$, we define the two Poisson processes $N_v$ with parameter $1$ and $O_v$ with parameter $\deimmunizationRate$. We refer to these processes as \emph{clocks}, and when an event occurs in one of them, we say that the relevant clock \emph{triggers}. We use $Z$ to denote the set of all of these clocks, that is, $Z = \left(\bigcup_{e \in E}{\{M_e\}}\right) \cup \left(\bigcup_{v \in V}{\{N_v,O_v\}}\right)$. Let $P$ denote the stochastic process in which all of the clocks in $Z$ evolve simultaneously and independently, starting at time 0. Note that almost surely there is no time point at which two clocks trigger at once. There are almost surely a countably infinite number of trigger times in $P$, which we index by the increasing sequence $\{\gamma_i\}_{i\in\N_{\geq0}}$, where $\gamma_0=0$.

A SIRS process $\contactProcess = (\contactProcess_\timePoint)_{\timePoint \in \R_{\geq 0}}$ has an underlying graph $G=(V,E)$, an infection rate $\infectionRate$, a deimmunization rate $\deimmunizationRate$, and an initial partition of $V$ into susceptible, infected, and recovered vertices with the respective sets $\susceptibleSet{0}$, $\infectedSet{0}$, and $\recoveredSet{0}$. At every time $\timePoint \in \R_{\geq 0}$, the configuration $\contactProcess_\timePoint$ is a partition of~$V$ into $\susceptibleSet{\timePoint}$, $\infectedSet{\timePoint}$, and $\recoveredSet{\timePoint}$. The configuration only changes at times in $P$. Let $i \in \N_{>0}$. We consider the following configuration transitions in $\gamma_i$:
\begin{itemize}
    \item If for some $e=\{u,v\}\in E$ we have $\gamma_i \in M_e$, $u \in \infectedSet{\gamma_{i-1}}$, and $v \in \susceptibleSet{\gamma_{i-1}}$, then $\susceptibleSet{\gamma_{i}} = \susceptibleSet{\gamma_{i-1}} \setminus \{v\}$, $\infectedSet{\gamma_{i}} = \infectedSet{\gamma_{i-1}} \cup \{v\}$, and $\recoveredSet{\gamma_{i}} = \recoveredSet{\gamma_{i-1}}$. We say that $v$ \emph{gets infected} at time point $\gamma_{i}$ by $u$.

    \item If for some $v \in V$ we have $\gamma_i \in N_v$ and $v \in \infectedSet{\gamma_{i-1}}$ then $\susceptibleSet{\gamma_{i}} = \susceptibleSet{\gamma_{i-1}}$, $\infectedSet{\gamma_{i}} = \infectedSet{\gamma_{i-1}} \setminus \{v\}$ and $\recoveredSet{\gamma_{i}} = \recoveredSet{\gamma_{i-1}} \cup \{v\}$. We say that $v$ \emph{recovers} at time point $\gamma_{i}$.

    \item If for some $v \in V$ we have $\gamma_i \in O_v$ and $v \in \recoveredSet{\gamma_{i-1}}$, then $\susceptibleSet{\gamma_{i}} = \susceptibleSet{\gamma_{i-1}} \cup \{v\}$, $\infectedSet{\gamma_{i}} = \infectedSet{\gamma_{i-1}}$ and $\recoveredSet{\gamma_{i}} = \recoveredSet{\gamma_{i-1}} \setminus \{v\}$. We say that $v$ \emph{gets susceptible} at time point $\gamma_{i}$.
\end{itemize}
If none of the above three cases occurs, the configuration of $\contactProcess$ at $\gamma_{i}$ is the same as the configuration of $\contactProcess$ at $\gamma_{i-1}$. Note that at all times between $\gamma_{i-1}$ and $\gamma_{i}$, $\contactProcess$ retains the same configuration as in $\gamma_{i-1}$.

In our proofs, we only consider the time points in $P$ at which the configuration changes. To this end, let $P'= \{\gamma_0\} \cup \{\gamma_i \mid i \in \N_{>0} \land \contactProcess_{\gamma_{i}} \neq \contactProcess_{\gamma_{i-1}}\}$. We index the times in $P'$ by the increasing sequence $\{\timeContinuous{i}\}_{i\in\N}$. For all $i \in \N$, we call $\timeContinuous{i}$ the $i$-th \emph{step} of the process.

If at any point in time no vertex is infected, then from that point onward, no vertex is infected.
We say that the infection \emph{dies out} or \emph{goes extinct} at the first (random) time~$T$ with $\infectedSet{T} = \emptyset$. We call~$T$ the \emph{survival time} of the SIRS process.

We only keep track of the number of vertices in each of the sets. To this end, we define for all $\timePoint \in \R_{\geq0}$ the random variables $\susceptibleContinuous{\timePoint} = |\susceptibleSet{\timePoint}|$, $\infectedContinuous{\timePoint} = |\infectedSet{\timePoint}|$, and $\recoveredContinuous{\timePoint} = |\recoveredSet{\timePoint}|$. These random variables change depending on the clocks in $P$. We say that an event \emph{happens at a rate of $r \in \R_{> 0}$} if and only if the set of clocks that cause this event when they trigger has a sum of rates equal to~$r$.

We define the \emph{projection} $C'$ of $C$ onto $G'$ as the process on~$G'$ such that, at each point in time, each vertex of~$G'$ in~$C'$ is in the same state as it is in~$C$. When considering such a projection, we use $\susceptibleContinuous{\timePoint}$, $\infectedContinuous{\timePoint}$, and $\recoveredContinuous{\timePoint}$ to only count the vertices of $C'$ in the corresponding state. Also~$\timeContinuous{i}$ only contains times at which the state of a vertex in $C'$ changes.
The survival time of a projected process is the first point in time that the projected process has no infected vertices.
Note that the survival time~$T'$ of~$C'$ is a lower bound for the survival time $T$ of $C$, as all infected vertices of $C'$ are also infected in $C$.

We use stochastic domination to transfer results from one random variable to another. We say that a random variable $(X_t)_{t \in \R}$ \emph{dominates} another random variable $(Y_t)_{t \in \R}$ if and only if there exists a coupling $(X'_t, Y'_t)_{t \in \R}$ in a way such that for all $t \in \R_{\geq 0}$ we have $X'_t \geq Y'_t$.

\subsection{Probabilistic Tools}

We use general concepts from probability theory (see for example \cite{feller1957introduction,mitzenmacher2017probability}). In addition, we use the following theorems.

We use the optional-stopping theorem for submartingales to bound the probability of reaching a specific configuration. For an event $E$, the symbol $\indicator{E}$ denotes the indicator random variable that is $1$ if $E$ is true and $0$ otherwise.

\begin{theorem}[Optional stopping {\cite[Theorem~13.2]{mitzenmacher2017probability}}]\label{pre:optionalStopping}
Let $(X_t)_{t \in \N}$ be a submartingale and $T$ a stopping time, both with respect to a filtration $(\filtration_t)_{t \in \N}$. Assume that the following two conditions hold:
\begin{enumerate}
\item $\E{T} < \infty$.
\item There is a $c \in \R$ such that for all $t\in \N$ we have $\E{|X_{t+1}-X_t|}[\filtration_t] \cdot \indicator{t<T} \leq c \cdot \indicator{t<T}$.
\end{enumerate}
Then $\E{X_T} \geq \E{X_0}$.
\end{theorem}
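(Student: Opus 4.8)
The plan is to deduce this Wald-type optional-stopping theorem from the elementary version for bounded stopping times, by truncating $T$ at a deterministic horizon $t$ and then letting $t \to \infty$ via dominated convergence, where the dominating random variable is manufactured directly from the two hypotheses. First note that $\E{T} < \infty$ forces $T < \infty$ almost surely, so $X_T$ is almost surely well defined. Fix $t \in \N$ and look at the stopped variable $X_{t \wedge T}$. Since $T$ is a stopping time we have $\{T > s\} \in \filtration_s$ for every $s$, and combining this with the identity $X_{(t+1) \wedge T} - X_{t \wedge T} = (X_{t+1} - X_t)\indicator{T > t}$ and the submartingale property of $(X_t)_{t \in \N}$ shows that $(X_{t \wedge T})_{t \in \N}$ is again a submartingale; in particular $\E{X_{t \wedge T}} \geq \E{X_0}$ for every $t$. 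It therefore suffices to establish $\lim_{t \to \infty} \E{X_{t \wedge T}} = \E{X_T}$.

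To justify passing the limit inside the expectation I would exhibit an integrable envelope. Put $D_s \defeq X_{s+1} - X_s$ and $Y \defeq \sum_{s \in \N} |D_s|\,\indicator{s < T}$, the total variation of the stopped path, which satisfies $|X_{t \wedge T} - X_0| \leq Y$ for all $t$. By Tonelli's theorem $\E{Y} = \sum_{s \in \N} \E{|D_s|\,\indicator{s < T}}$. For fixed $s$ the indicator $\indicator{s < T}$ is $\filtration_s$-measurable, so the tower rule together with hypothesis~2 gives $\E{|D_s|\,\indicator{s < T}} = \E{\indicator{s < T}\,\E{|D_s|}[\filtration_s]} \leq c\,\Pr{s < T}$. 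Summing over $s$ and using $\sum_{s \in \N} \Pr{s < T} = \sum_{s \in \N} \Pr{T > s} = \E{T}$ yields $\E{Y} \leq c\,\E{T} < \infty$.

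Now $T < \infty$ almost surely gives $X_{t \wedge T} \to X_T$ almost surely as $t \to \infty$, and $|X_{t \wedge T}| \leq |X_0| + Y$ with $|X_0| + Y$ integrable (a submartingale has integrable marginals, and $\E{Y} < \infty$ was just shown). Dominated convergence then yields $\E{X_T} = \lim_{t \to \infty} \E{X_{t \wedge T}} \geq \E{X_0}$, which is the claim.

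\textbf{Main obstacle.} The only step needing genuine care is the construction of the integrable envelope $Y$ and the bound $\E{Y} < \infty$: this is exactly where the two hypotheses are spent, hypothesis~2 supplying the per-step constant $c$ along the trajectory up to $T$ and hypothesis~1 making $\sum_s \Pr{T > s} = \E{T}$ finite, and it is what legitimizes the interchange of limit and expectation. Everything else---the stopped process being a submartingale and the finite-horizon inequality $\E{X_{t \wedge T}} \geq \E{X_0}$---is routine martingale bookkeeping.
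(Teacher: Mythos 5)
The paper does not prove this statement at all: it is imported verbatim from Mitzenmacher and Upfal as a black-box tool (\cite[Theorem~13.2]{mitzenmacher2017probability}), so there is no in-paper argument to compare against. Your proof is a correct, self-contained derivation, and it is the standard one for this version of optional stopping: stop at a bounded horizon, observe that $(X_{t\wedge T})_{t\in\N}$ is again a submartingale so $\E{X_{t\wedge T}} \geq \E{X_0}$, then pass $t\to\infty$ by dominated convergence. The one place that needs care is exactly where you put it: building the integrable envelope $Y=\sum_s |X_{s+1}-X_s|\indicator{s<T}$, using that $\indicator{s<T}$ is $\filtration_s$-measurable (since $\{T\le s\}\in\filtration_s$) to invoke the tower property and hypothesis~2, and then closing the sum with $\sum_s\Pr{T>s}=\E{T}<\infty$ from hypothesis~1. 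All the steps, including the a.s.\ finiteness of $T$, the telescoping bound $|X_{t\wedge T}-X_0|\le Y$, and the integrability of $|X_0|+Y$, are correct. Nothing to fix.
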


We use the following theorem in order to show an exponential expected survival time for the SIRS process. We state it in a fashion that better suits our purposes.

\begin{theorem}[Negative drift {\cite[Theorem~$4$]{oliveto2011simplified}~\cite{OlivetoW12NegativeDriftErratum}}]\label{pre:NegativeDrift}
Let $(X_t)_{t \in \N}$ be a random process over~\R, adapted to a filtration $(\filtrationContinuous{t})_{t \in \N}$. Let there be an interval $[a,b] \subseteq \R$, two constants $\delta,\varepsilon \in \R_{>0}$ and, possibly depending on $l \defeq b-a$, a function $r(l)$ satisfying $1 \leq r(l) \in \smallO{l/\log(l)}$. Let $T = \inf\{ t \in \N \mid X_t \geq b \}$. Suppose that for all $t \in \N$ the following two conditions hold:
\begin{enumerate}
    \item $\E{X_{t+1}-X_t}[\filtrationContinuous{t}] \cdot \indicator{a < X_t < b} \leq -\varepsilon \cdot \indicator{a < X_t < b}$.
    \item For all $j \in \R_{\geq 0}$ we have $\Pr{|X_{t+1}-X_t|\geq j}[ \filtrationContinuous{t}] \cdot \indicator{t<T} \leq \frac{r(l)}{(1+\delta)^j} \cdot \indicator{t<T}.$
\end{enumerate}
Then there exists a constant $c \in \R_{>0}$ such that
\begin{align*}
    \Pr{T \leq 2^{cl/r(l)}}[\filtrationContinuous{0}][\big] \cdot \indicator{X_0 \leq a} &= 2^{-\bigOmega{l/r(l)}} \cdot \indicator{X_0 \leq a}.\qedhere
\end{align*}
\end{theorem}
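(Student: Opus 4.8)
The plan is to prove this negative-drift theorem by the standard exponential-potential argument, which is in essence the special case of the hitting-time theorem of \textcite{Hajek82HittingTime} adapted to the two conditions above. Write $Z_t \defeq X_{t+1} - X_t$ and $l \defeq b-a$, and fix the exponential rate $\lambda \defeq \eta_0/r(l)$, where $\eta_0 = \eta_0(\varepsilon,\delta) > 0$ is a small constant chosen later; we insist in particular that $\lambda \leq \tfrac{1}{2}\ln(1+\delta)$, so that $\beta \defeq \ln(1+\delta) - \lambda \geq \tfrac{1}{2}\ln(1+\delta)$ is a positive constant. The scaling $\lambda = \bigTheta{1/r(l)}$ is not optional: it is exactly what neutralizes the factor $r(l)$ in Condition~2, and it is also what produces the exponent $l/r(l)$ in the conclusion.

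The first step is two one-step moment-generating estimates, both valid whenever $t < T$. \emph{(a) Negative drift on the exponential scale:} on the event $\{a < X_t < b\}$, the elementary bound $e^x - 1 - x \leq \tfrac{1}{2}x^2 e^{|x|}$ gives $\E{e^{\lambda Z_t}}[\filtrationContinuous{t}] \leq 1 + \lambda\,\E{Z_t}[\filtrationContinuous{t}] + \tfrac{\lambda^2}{2}\E{Z_t^2 e^{\lambda|Z_t|}}[\filtrationContinuous{t}]$; the linear term is at most $-\lambda\varepsilon$ by Condition~1, while Condition~2 bounds $\E{Z_t^2 e^{\lambda|Z_t|}}[\filtrationContinuous{t}]$ by a convergent geometric-type sum (convergent because $e^\lambda < 1+\delta$) of order $\bigO{r(l)}$, so the quadratic term is $\bigO{\lambda^2 r(l)} = \bigO{\lambda\eta_0}$. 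Choosing $\eta_0$ small enough that this is at most $\lambda\varepsilon/2$ yields $\E{e^{\lambda Z_t}}[\filtrationContinuous{t}] \leq 1 - \lambda\varepsilon/2 \leq e^{-\lambda\varepsilon/2}$. \emph{(b) Bounded upward jumps:} unconditionally, writing $Z_t^+ = \max\{Z_t,0\}$ and using the layer-cake identity, $\E{e^{\lambda Z_t^+}}[\filtrationContinuous{t}] = 1 + \lambda\int_0^\infty e^{\lambda u}\,\Pr{Z_t > u}[\filtrationContinuous{t}]\,\mathrm{d}u \leq 1 + \lambda r(l)/\beta = 1 + \bigO{\eta_0}$, a constant we call $D$.

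For the second step let $T = \inf\{t : X_t \geq b\}$ and, for $s \geq 1$, set $A_s \defeq \E{e^{\lambda(X_s-b)}\indicator{T > s-1}}[\filtrationContinuous{0}]$, together with $A_0 \defeq \E{e^{\lambda(X_0-b)}}[\filtrationContinuous{0}] = e^{\lambda(X_0-b)}$, so that $A_0 \leq e^{-\lambda l}$ on the event $\{X_0 \leq a\}$. The event $\{T > s\}$ is $\filtrationContinuous{s}$-measurable and forces $X_s < b$; splitting it according to whether $X_s \leq a$ or $a < X_s < b$ and applying (a) on the second piece (which contributes at most $e^{-\lambda\varepsilon/2}A_s$) and (b) on the first, where $e^{\lambda(X_{s+1}-b)} \leq e^{-\lambda l}e^{\lambda Z_s^+}$ because $X_s \leq a$ (contributing at most $De^{-\lambda l}$), yields the recursion $A_{s+1} \leq e^{-\lambda\varepsilon/2}A_s + De^{-\lambda l}$. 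Unrolling and using $1 - e^{-\lambda\varepsilon/2} = \bigTheta{\lambda}$ gives $A_s \leq A_0 + \tfrac{De^{-\lambda l}}{1 - e^{-\lambda\varepsilon/2}} = \bigO{r(l)}\,e^{-\lambda l}$ on $\{X_0 \leq a\}$. Since $X_0 \leq a < b$ forces $T \geq 1$, and since on $\{T = s\}$ we have $e^{\lambda(X_s-b)} \geq 1$ and $\indicator{T > s-1} = 1$, we get $\Pr{T = s}[\filtrationContinuous{0}] \leq A_s$ for every $s \geq 1$; hence, with $t^* \defeq \lceil e^{\lambda l/2}\rceil$, $\Pr{T \leq t^*}[\filtrationContinuous{0}] \leq \sum_{s=1}^{t^*}A_s \leq \bigO{r(l)}\,e^{-\lambda l/2}$ on $\{X_0 \leq a\}$. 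Finally, $r(l) \in \smallO{l/\log l}$ implies $r(l)\log r(l) = \smallO{l}$ and hence $\log r(l) = \smallO{l/r(l)}$, so $\bigO{r(l)} = 2^{\smallO{l/r(l)}}$ and, since $\lambda l = \eta_0 l/r(l)$, the last bound is $2^{-\bigOmega{l/r(l)}}$; moreover $t^* \geq 2^{(\eta_0/(2\ln 2))\,l/r(l)}$, so with $c \defeq \eta_0/(2\ln 2)$ one gets $\Pr{T \leq 2^{cl/r(l)}}[\filtrationContinuous{0}] = 2^{-\bigOmega{l/r(l)}}$, and multiplying through by $\indicator{X_0 \leq a}$ (which is $\filtrationContinuous{0}$-measurable) gives the statement.

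The main obstacle is coordinating three scales at once. Condition~2 caps the usable exponential rate at $\lambda = \bigTheta{1/r(l)}$, so the one-step contraction on the exponential scale is only by a factor $e^{-\bigTheta{\lambda}}$; below level $a$ there is no drift hypothesis at all, only the jump bound, which pins the potential at the equilibrium value $\bigTheta{e^{-\lambda l}}$, and a union-type bound over $t$ steps against this value is exactly what limits $t^*$ to $e^{\bigTheta{\lambda l}} = e^{\bigTheta{l/r(l)}}$; and the polynomial-in-$r(l)$ prefactor accrued along the way must then be absorbed into $2^{-\bigOmega{l/r(l)}}$, which is the single place where the hypothesis $r(l) = \smallO{l/\log l}$ is used. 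The routine-but-delicate parts are the choice of $\eta_0$ in terms of $\varepsilon$ and $\delta$ in estimate (a), and the careful handling of the stopping-time indicators in the recursion so that Conditions~1 and~2 are invoked only on $\{t < T\}$, where they hold.
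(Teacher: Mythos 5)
This theorem is not proved in the paper at all: it is quoted (up to notation) from Oliveto and Witt, so there is no in-paper argument to compare against. Your proposal is a correct, self-contained proof by the standard exponential-potential method going back to Hajek, whereas the cited source obtains the statement by verifying the hypotheses of Hajek's general hitting-time theorem; the substance is the same, and your direct route has the virtue of making visible exactly where each hypothesis enters — the rate cap $\lambda = \bigTheta{1/r(l)}$ forced by Condition~2, the one-step contraction $\E{e^{\lambda Z_t}}[\filtrationContinuous{t}] \leq e^{-\lambda\varepsilon/2}$ from Condition~1, and the absorption of the $\bigO{r(l)}$ prefactor into $2^{-\bigOmega{l/r(l)}}$ via $r(l) \in \smallO{l/\log(l)}$ (indeed $r(l)\log r(l) \leq r(l)\log l \in \smallO{l}$ for large $l$). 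The one genuinely delicate point, namely that Condition~1 carries the indicator $\indicator{a < X_t < b}$ while Condition~2 carries $\indicator{t<T}$, and that $X_t < b$ alone does not imply $t < T$, is handled correctly: your recursion invokes estimate (a) only on $\{T>s\}\cap\{a<X_s<b\}$ and estimate (b) only on $\{T>s\}\cap\{X_s\leq a\}$, both of which lie inside $\{s<T\}$. I see no gap.
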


The following theorem bounds the expected value of the maximum of $n$ exponentially distributed random variables.

\begin{theorem}[{\cite[Lemma~2.10]{mitzenmacher2017probability}}]\label{pre:maxExponential}
Let $n \in \N_{> 0}$, and let $\{X_i\}_{i \in [n]}$ be independent random variables that are each exponentially distributed with parameter $\lambda \in \R_{> 0}$. Let $m = \max_{i \in [n]} X_i$, and let $H_n$ be the $n$-th harmonic number. Then
\begin{align*}
\E{m} &= \frac{H_n}{\lambda} < \frac{1 + \ln(n)}{\lambda}.\qedhere
\end{align*}
\end{theorem}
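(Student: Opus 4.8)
The plan is to prove the exact identity $\E{m} = H_n/\lambda$ first, and then reduce the bound $H_n < 1 + \ln(n)$ to an elementary integral comparison. For the identity, the natural route is the classical order-statistics decomposition of the maximum of independent exponentials that exploits the memorylessness of the exponential distribution.

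Concretely, I would run all $n$ variables $X_1,\dots,X_n$ as simultaneous ``clocks'' of rate $\lambda$. The time $Y_1$ until the first of them fires is the minimum of $n$ independent $\mathrm{Exp}(\lambda)$ variables, hence $Y_1 \sim \mathrm{Exp}(n\lambda)$. By memorylessness, conditioned on the identity of this first clock and on $Y_1$, the residual times of the remaining $n-1$ clocks are again independent $\mathrm{Exp}(\lambda)$ variables, so the additional waiting time $Y_2$ until the second clock fires is $\mathrm{Exp}((n-1)\lambda)$-distributed and independent of $Y_1$. Iterating, $m = \sum_{k=1}^{n} Y_k$ where $Y_k \sim \mathrm{Exp}((n-k+1)\lambda)$ are mutually independent, so by linearity of expectation $\E{m} = \sum_{k=1}^{n}\frac{1}{(n-k+1)\lambda} = \frac{1}{\lambda}\sum_{j=1}^{n}\frac{1}{j} = \frac{H_n}{\lambda}$. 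An alternative, equally short derivation avoids memorylessness: since $m \geq 0$, we have $\E{m} = \int_0^\infty \Pr{m > t}\,\d t = \int_0^\infty \big(1 - (1 - \eulerE[-\lambda t])^n\big)\,\d t$; the substitution $u = \eulerE[-\lambda t]$ gives $\E{m} = \frac{1}{\lambda}\int_0^1 \frac{1 - (1-u)^n}{u}\,\d u$, and expanding $\frac{1 - (1-u)^n}{u} = \sum_{k=0}^{n-1}(1-u)^k$ via the finite geometric sum and integrating term by term yields $\frac{1}{\lambda}\sum_{k=0}^{n-1}\frac{1}{k+1} = \frac{H_n}{\lambda}$.

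For the final inequality, $H_n = 1 + \sum_{k=2}^{n}\frac{1}{k} < 1 + \int_1^{n}\frac{1}{x}\,\d x = 1 + \ln(n)$, using $\frac{1}{k} < \int_{k-1}^{k}\frac{1}{x}\,\d x$ for each integer $k \geq 2$; dividing by $\lambda > 0$ completes the proof. There is essentially no obstacle here: the only points needing a line of care are the justification of the order-statistics decomposition in the first route (i.e., that conditioning leaves the remaining clocks fresh and independent), or, in the second route, the termwise integration, which is immediate since all summands are nonnegative on $[0,1]$.
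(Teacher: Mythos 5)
The paper does not prove this statement itself---it imports it verbatim from \cite[Lemma~2.10]{mitzenmacher2017probability}---and your first route (minimum of $n$ exponentials is $\mathrm{Exp}(n\lambda)$, then memorylessness to peel off independent gaps $Y_k \sim \mathrm{Exp}((n-k+1)\lambda)$ summing to the maximum) is exactly the standard argument given in that cited source, while your second route via $\E{m}=\int_0^\infty \Pr{m>t}\,\d t$ is an equally valid alternative. Both derivations and the integral comparison $H_n < 1+\ln(n)$ are correct; the only quibble is that the strict inequality degenerates to equality at $n=1$ (an artifact of the statement as quoted, not of your proof), and the paper only ever invokes the bound for large $n$.
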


We use the following version of Wald's equation, which does not require the addends to be independent.

\begin{theorem}[Generalized Wald's equation~{\cite[Theorem~$5$]{DoerrK22WaldsEquation}}]\label{pre:wald}
Let $c,c' \in \R$, and let $(X_t)_{t \in \N}$ be a random process over $\R_{\geq c}$ such that $\sum_{i \in [S]}{X_i}$ has a finite expectation. Furthermore, let $(\filtration_t)_{t \in \N}$ be a filtration, and let $S$ be a stopping time with respect to $\filtration$.
If  for all $i \in \N$, it holds that $\E{X_{i+1}}[\filtration_i] \leq c'$, then
\begin{align*}
&\E{\sum\nolimits_{i \in [S]} {X_i}}[\filtration_0] = \E{\sum\nolimits_{i \in [S]} {\E{X_i} [ \filtration_{i-1}]}}[\filtration_0].\qedhere
\end{align*}
\end{theorem}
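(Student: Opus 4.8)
Since both sides of the identity are series indexed over $i \in \N$, the plan is to form their term-by-term difference, argue that the partial sums of this difference have conditional expectation $0$ given $\filtration_0$ --- a short telescoping argument that relies on $S$ being a stopping time --- and then pass to the limit via dominated convergence.

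Concretely, with $[S] = \{1,\dots,S\}$, I would set $U_i \defeq \indicator{i \leq S}\bigl(X_i - \E{X_i}[\filtration_{i-1}]\bigr)$ for $i \geq 1$ and $M_n \defeq \sum_{i=1}^{n} U_i$, so $M_0 = 0$. The first observation is that, because $S$ is a stopping time with respect to $(\filtration_t)_{t \in \N}$, the event $\{S \leq i-1\}$ lies in $\filtration_{i-1}$, hence so does its complement $\{i \leq S\}$; thus $\indicator{i \leq S}$ is $\filtration_{i-1}$-measurable. I would also note that each $X_i$ with $i \geq 1$ is integrable: the bound $X_i \geq c$ makes the negative part of $X_i$ at most $|c|$, and the hypothesis applied with index $i-1$, i.e.\ $\E{X_i}[\filtration_{i-1}] \leq c'$, then forces $\E{X_i^+} \leq c' + |c| < \infty$ upon taking expectations. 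Combining these two facts, pulling the $\filtration_{i-1}$-measurable factor $\indicator{i \leq S}$ out of the conditional expectation gives $\E{U_i}[\filtration_{i-1}] = 0$ for every $i \geq 1$, and a straightforward induction over $n$ using $\filtration_0 \subseteq \filtration_{i-1}$ then yields $\E{M_n}[\filtration_0] = 0$ for all $n \in \N$.

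Since $S < \infty$ almost surely, the series $\sum_{i=1}^{\infty} U_i$ has only finitely many nonzero terms, so $M_n \to \sum_{i \in [S]} X_i - \sum_{i \in [S]} \E{X_i}[\filtration_{i-1}]$ almost surely, and it remains only to interchange this limit with $\E{\cdot}[\filtration_0]$. For that I would apply (conditional) dominated convergence with the dominating function $W \defeq \sum_{i \in [S]}\bigl(|X_i| + |\E{X_i}[\filtration_{i-1}]|\bigr)$, which dominates every $|M_n|$. Its integrability is where the remaining hypotheses are used: conditional Jensen gives $|\E{X_i}[\filtration_{i-1}]| \leq \E{|X_i|}[\filtration_{i-1}]$, hence $\E{\sum_{i \in [S]}|\E{X_i}[\filtration_{i-1}]|} \leq \E{\sum_{i \in [S]}|X_i|}$, and this last quantity is controlled by combining the assumed finiteness of $\E{\sum_{i \in [S]} X_i}$ with the lower bound $X_i \geq c$ and the drift bound $\E{X_i}[\filtration_{i-1}] \leq c'$. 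With $\E{W} < \infty$ in hand, dominated convergence gives $\E{\sum_{i=1}^{\infty} U_i}[\filtration_0] = \lim_{n \to \infty} \E{M_n}[\filtration_0] = 0$, which is exactly the claimed equality once one of the two sums is moved to the other side.

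I expect the main obstacle to be precisely this last integrability bookkeeping: upgrading the finiteness of $\E{\sum_{i \in [S]} X_i}$ to the finiteness of $\E{\sum_{i \in [S]}|X_i|}$. This is the place where one genuinely needs $X_i$ to be bounded below and to have a conditionally bounded-above mean, so that the finite random sum cannot acquire large positive and large negative contributions that cancel; without such a safeguard no mass is lost when passing to absolute values. Everything else --- the $\filtration_{i-1}$-measurability of $\{i \leq S\}$, the telescoping evaluation of $\E{M_n}[\filtration_0]$, and the limiting step --- is routine.
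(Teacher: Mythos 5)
The paper does not prove this statement; it is imported verbatim from Doerr and Kötzing, so there is no internal proof to compare against and I can only judge your argument on its own terms. Your architecture is the standard one and most of it is sound: the $\filtration_{i-1}$-measurability of $\indicator{i \leq S}$, the identity $\E{U_i}[\filtration_{i-1}] = 0$, the telescoping $\E{M_n}[\filtration_0] = 0$, and the almost-sure convergence of $M_n$ (you do silently assume $S < \infty$ almost surely, which is defensible as implicit in the sum being a well-defined real random variable, but should be said).

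The gap is exactly where you predicted it, and your proposed fix does not close it. To run dominated convergence you need $\E{\sum_{i \in [S]} |X_i|} < \infty$, and you claim this follows from the finiteness of $\E{\sum_{i \in [S]} X_i}$ together with $X_i \geq c$ and $\E{X_i}[\filtration_{i-1}] \leq c'$. It does not. The only available route is $|X_i| \leq X_i + 2|c|$ (since the negative part of $X_i$ is at most $|c|$), which gives $\sum_{i \in [S]} |X_i| \leq \sum_{i \in [S]} X_i + 2|c|\,S$ and therefore additionally requires $\E{S} < \infty$ whenever $c < 0$ --- a condition that is not among the hypotheses and does not follow from them. Concretely, take $X_{2k-1} = 1$ and $X_{2k} = -1$ deterministically and let $S$ be any even-valued stopping time with $\E{S} = \infty$: then $c = -1$, $c' = 1$, and $\sum_{i \in [S]} X_i = 0$ has finite expectation, yet $\sum_{i \in [S]} |X_i| = S$ is not integrable, so your dominating function $W$ fails. (The conclusion holds trivially in this example, so this refutes your proof, not the theorem.) To repair the argument you must either add $\E{S} < \infty$ as a hypothesis --- harmless for the one place this paper invokes the theorem, where the addends are nonnegative waiting times, so $c = 0$ and $\sum_{i \in [S]} |X_i| = \sum_{i \in [S]} X_i$ anyway --- or restructure: apply Tonelli to the shifted nonnegative variables $X_i - c$, for which the claimed identity holds in $[0,\infty]$ with no integrability argument at all, and then treat the correction term $c\,\E{S}[\filtration_0]$ separately, which again forces you to confront the integrability of $S$ when $c < 0$. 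Either way, the integrability bookkeeping is not routine residue but the one place where an additional idea (or an additional hypothesis) is genuinely required.
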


\subsection{Expander graphs}

There are many notions of how to define expander graphs. We use algebraic expanders in which all but one of the eigenvalues of the normalized Laplacian of the graph are very close to~$1$. These graphs have some nice properties that let us bound the number of edges between infected and susceptible vertices.
Formally, let $G = (V, E)$ be a graph with~$\numberOfVertices$ vertices $\{v_i\}_{i = 1}^{n}$, and let~$L$ be its normalized Laplacian, which is defined as
\begin{equation*}
    L_{i,j} =
    \begin{cases}
        1 & \text{if}\ i=j, \\
        -\frac{1}{\sqrt{\mathrm{deg}(v_i)\mathrm{deg}(v_j)}} & \text{if there is an edge between $v_i$ and $v_j$}, \\
        0 &\text{otherwise}.
    \end{cases}
\end{equation*}
Let~$L$ have eigenvalues $\lambda_1 \leq ... \leq \lambda_\numberOfVertices$.
The \emph{spectral expansion} of~$L$ is defined as $\spectralGap = \max_{i \geq 2}|1 - \lambda_i|$.
We call~$G$ an $(\numberOfVertices, (1\pm \degreeGap)\degree, \spectralGap)$-expander if and only if it has $\numberOfVertices$ vertices, a spectral expansion of $\spectralGap$ and only vertices with degree between $(1- \degreeGap)\degree$ and $(1+\degreeGap)\degree$.

For two vertex sets $X,Y \subseteq V$, let $\edges{X}{Y}$ denote the number of edges between $X$ and $Y$. Using this notation, we have the following theorem

\begin{theorem}[{\cite[Theorem~$5.2$]{book:694330}}]\label{pre:expanderGeneral}
Let $G=(V,E)$ be a graph and let $X,Y \subseteq V$. Then
\begin{align*}
    \left||\edges{X}{Y}|-\frac{\vol{X} \cdot \vol{Y}}{\vol{V}}\right|&\leq \spectralGap \cdot \frac{\sqrt{\vol{X} \vol{\overline{X}} \vol{Y} \vol{\overline{Y}}}}{\vol{V}}.\qedhere
\end{align*}
\end{theorem}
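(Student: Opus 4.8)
This is the expander mixing lemma for (possibly irregular) graphs, and I would reprove it by a standard spectral-decomposition argument. First, rewrite the edge count as a bilinear form: letting $A$ be the adjacency matrix of $G$ and $D$ the diagonal matrix of degrees, we have $\edges{X}{Y} = \mathbf{1}_X^\top A\, \mathbf{1}_Y$, where $\mathbf{1}_X, \mathbf{1}_Y$ are the $0/1$ indicator vectors of $X$ and $Y$ (edges inside $X \cap Y$ being counted with multiplicity two, matching the convention in the statement). Writing $A = D^{1/2} \mathcal{A}\, D^{1/2}$ with the normalized adjacency matrix $\mathcal{A} \defeq D^{-1/2} A D^{-1/2} = I - L$, this becomes $\edges{X}{Y} = f^\top \mathcal{A}\, g$ for $f \defeq D^{1/2}\mathbf{1}_X$ and $g \defeq D^{1/2}\mathbf{1}_Y$, and note $\|f\|^2 = \vol{X}$, $\|g\|^2 = \vol{Y}$. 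The eigenvalues of the symmetric matrix $\mathcal{A}$ are $1 - \lambda_1 \geq \dots \geq 1 - \lambda_\numberOfVertices$; its top eigenvalue is $1$, with eigenvector $w$ given by $w_i = \sqrt{\mathrm{deg}(v_i)}$ and $\|w\|^2 = \vol{V}$, and by definition of the spectral expansion every other eigenvalue has absolute value at most $\spectralGap$.

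Next, decompose $f$ and $g$ along $w$ and along its orthogonal complement $w^\perp$. Since $\langle f, w\rangle = \sum_{v_i \in X}\mathrm{deg}(v_i) = \vol{X}$, we get $f = \tfrac{\vol{X}}{\vol{V}}\, w + f^\perp$ with $f^\perp \perp w$ and $\|f^\perp\|^2 = \vol{X} - \tfrac{\vol{X}^2}{\vol{V}} = \tfrac{\vol{X}\,\vol{\overline{X}}}{\vol{V}}$, using $\vol{\overline{X}} = \vol{V} - \vol{X}$; symmetrically $g = \tfrac{\vol{Y}}{\vol{V}}\, w + g^\perp$ with $\|g^\perp\|^2 = \tfrac{\vol{Y}\,\vol{\overline{Y}}}{\vol{V}}$. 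Because $\mathcal{A} w = w$ and $\mathcal{A}$ is symmetric, the cross terms vanish ($w^\top \mathcal{A}\, g^\perp = w^\top g^\perp = 0$, and likewise with $f^\perp$), so $\edges{X}{Y} = f^\top \mathcal{A}\, g = \tfrac{\vol{X}\vol{Y}}{\vol{V}^2}\, w^\top \mathcal{A}\, w + (f^\perp)^\top \mathcal{A}\, g^\perp = \tfrac{\vol{X}\vol{Y}}{\vol{V}} + (f^\perp)^\top \mathcal{A}\, g^\perp$.

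Finally, bound the error term. Since $\mathcal{A}$ leaves $w^\perp$ invariant and all of its eigenvalues there have absolute value at most $\spectralGap$, we have $\|\mathcal{A}\, g^\perp\| \leq \spectralGap\,\|g^\perp\|$, so Cauchy--Schwarz gives $\bigl|(f^\perp)^\top \mathcal{A}\, g^\perp\bigr| \leq \spectralGap\,\|f^\perp\|\,\|g^\perp\| = \spectralGap \cdot \tfrac{\sqrt{\vol{X}\,\vol{\overline{X}}\,\vol{Y}\,\vol{\overline{Y}}}}{\vol{V}}$, which rearranges to the claimed inequality. I expect no genuine obstacle here: the whole argument is linear algebra, and the only thing requiring care is the degree-weighting bookkeeping — correctly identifying the Perron eigenvector $w$ of $\mathcal{A}$ together with its norm $\|w\|^2 = \vol{V}$, and fixing the counting convention for $\edges{X}{Y}$ when $X$ and $Y$ overlap — both of which are routine and consistent with the cited source.
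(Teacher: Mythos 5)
The paper does not prove this statement; it simply invokes it as Theorem~5.2 of Chung's \emph{Spectral Graph Theory} (the cited \texttt{book:694330}). Your proof is correct and is essentially the standard argument appearing in that source: decompose $D^{1/2}\mathbf{1}_X$ and $D^{1/2}\mathbf{1}_Y$ along the Perron eigenvector $w$ of $\mathcal{A}=I-L$ and its orthogonal complement, then apply Cauchy--Schwarz together with the bound $\delta$ on the operator norm of $\mathcal{A}$ restricted to $w^\perp$; the only implicit hypothesis, that $G$ has no isolated vertices so $D^{-1/2}$ exists, is harmless in every use the paper makes of the lemma.
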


Applying \Cref{pre:expanderGeneral} to expanders, we get the following two corollaries..

\begin{corollary}\label{pre:expanderComplement}
Let $G=(V,E)$ be a $(\numberOfVertices, (1\pm \degreeGap)\degree, \spectralGap)$-expander, and let $X \subseteq V$. Then
\begin{align*}
    |\edges{X}{\overline{X}}|&\geq (1-\spectralGap)(1- 3 \degreeGap)\degree \frac{|X|\cdot |\overline{X}|}{n}.\qedhere
\end{align*}
\end{corollary}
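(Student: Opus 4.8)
The plan is to obtain this as a direct consequence of the expander mixing lemma (\Cref{pre:expanderGeneral}) combined with the degree bounds that are built into the definition of an $(\numberOfVertices,(1\pm\degreeGap)\degree,\spectralGap)$-expander. First I would instantiate \Cref{pre:expanderGeneral} with $Y=\overline{X}$, so that $\overline{Y}=X$. Then the quantity under the square root becomes $\vol{X}\,\vol{\overline{X}}\,\vol{\overline{X}}\,\vol{X}=\big(\vol{X}\,\vol{\overline{X}}\big)^2$, and the bound collapses to
\begin{align*}
    \left||\edges{X}{\overline{X}}|-\frac{\vol{X}\cdot\vol{\overline{X}}}{\vol{V}}\right|\leq \spectralGap\cdot\frac{\vol{X}\cdot\vol{\overline{X}}}{\vol{V}}.
\end{align*}
Keeping only the lower-bound direction of the absolute value yields $|\edges{X}{\overline{X}}|\geq(1-\spectralGap)\tfrac{\vol{X}\cdot\vol{\overline{X}}}{\vol{V}}$.

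The second step is to convert volumes into cardinalities. Since every vertex of the expander has degree between $(1-\degreeGap)\degree$ and $(1+\degreeGap)\degree$, we have $\vol{X}\geq(1-\degreeGap)\degree|X|$, $\vol{\overline{X}}\geq(1-\degreeGap)\degree|\overline{X}|$, and $\vol{V}\leq(1+\degreeGap)\degree\numberOfVertices$; note that $X$ and $\overline{X}$ are lower-bounded while $V$ is upper-bounded. Substituting these gives
\begin{align*}
    |\edges{X}{\overline{X}}|\geq(1-\spectralGap)\,\frac{(1-\degreeGap)^2}{1+\degreeGap}\,\degree\,\frac{|X|\cdot|\overline{X}|}{\numberOfVertices}.
\end{align*}
It then remains to check the elementary inequality $\tfrac{(1-\degreeGap)^2}{1+\degreeGap}\geq 1-3\degreeGap$. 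For $\degreeGap<1$ we may multiply by $1+\degreeGap>0$ without flipping the sign, reducing the claim to $(1-\degreeGap)^2\geq(1-3\degreeGap)(1+\degreeGap)$, i.e. to $1-2\degreeGap+\degreeGap^2\geq 1-2\degreeGap-3\degreeGap^2$, which holds with slack $4\degreeGap^2\geq 0$. Plugging this in finishes the proof.

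There is essentially no serious obstacle here: the statement is a routine corollary of \Cref{pre:expanderGeneral}. The only points that require a little care are choosing $Y=\overline{X}$ so that the square-root term simplifies cleanly, substituting the degree bounds in the correct direction, and the harmless standing assumption $\degreeGap<1$ (which is automatic since $\degreeGap\to 0$). All remaining manipulations are elementary algebra.
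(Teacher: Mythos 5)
Your proof is correct and mirrors the paper's own argument: both apply the mixing lemma (\Cref{pre:expanderGeneral}) with $Y=\overline{X}$ so the square-root term collapses to $\vol{X}\vol{\overline{X}}$, keep the lower-bound direction, translate volumes to cardinalities via the degree bounds, and absorb $\frac{(1-\degreeGap)^2}{1+\degreeGap}$ into $1-3\degreeGap$. The only difference is that you spell out the final elementary inequality, which the paper leaves implicit.
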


\begin{proof}
Because the vertex degrees of all vertices in $G$ are bounded, we know that for each $S \subseteq V$ holds $(1-\degreeGap)\degree |S| \leq \vol{S} \leq (1-\degreeGap)\degree |S|$. Plugging that into the result of \Cref{pre:expanderGeneral} gives us
\begin{align*}
    |\edges{X}{\overline{X}}|&\geq \frac{\vol{X} \cdot \vol{\overline{X}}}{\vol{V}}- \spectralGap \cdot \frac{\sqrt{\vol{X} \vol{\overline{X}} \vol{\overline{X}} \vol{\overline{\overline{X}}}}}{\vol{V}}\\
    &= (1- \spectralGap)\frac{\vol{X} \cdot \vol{\overline{X}}}{\vol{V}}\\
    &\geq (1-\spectralGap) \frac{(1-\degreeGap)\degree |X|\cdot (1-\degreeGap)\degree |\overline{X}|}{(1+\degreeGap)\degree \numberOfVertices}\\
    &\geq (1-\spectralGap)(1- 3 \degreeGap)\degree \frac{|X|\cdot |\overline{X}|}{n}.\qedhere
\end{align*}
\end{proof}

\begin{corollary}\label{pre:expanderbroad}
Let $G=(V,E)$ be a $(\numberOfVertices, (1\pm \degreeGap)\degree, \spectralGap)$-expander, and let $X,Y \subseteq V$.
If $\degreeGap \leq 1/5$, then
\begin{align*}
    \left||\edges{X}{Y}| - \degree\frac{|X|\cdot|Y|}{\numberOfVertices}\right|&\leq 4 \degreeGap \degree\frac{|X|\cdot|Y|}{\numberOfVertices} + 2\spectralGap\degree\sqrt{|X|\cdot |Y|}.\qedhere
\end{align*}
\end{corollary}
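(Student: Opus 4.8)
The plan is to bound $\big||\edges{X}{Y}| - \degree\frac{|X|\cdot|Y|}{\numberOfVertices}\big|$ by splitting it, via the triangle inequality, into the genuine expander error controlled by \Cref{pre:expanderGeneral} and a ``degree-rounding'' error that comes from replacing the volumes $\vol{X}$, $\vol{Y}$, $\vol{V}$ by their degree-$\degree$ approximations. Concretely, I would start from
\begin{align*}
    \left||\edges{X}{Y}| - \degree\frac{|X|\cdot|Y|}{\numberOfVertices}\right| &\leq \left||\edges{X}{Y}| - \frac{\vol{X} \cdot \vol{Y}}{\vol{V}}\right| + \left|\frac{\vol{X} \cdot \vol{Y}}{\vol{V}} - \degree\frac{|X|\cdot|Y|}{\numberOfVertices}\right|
\end{align*}
and bound the two summands separately, using throughout that $(1-\degreeGap)\degree|S| \leq \vol{S} \leq (1+\degreeGap)\degree|S|$ for every $S \subseteq V$ (applied to $S \in \{X, Y, V\}$ as well as to the complements $\overline{X}$, $\overline{Y}$, for which the trivial bound $|\overline{X}|, |\overline{Y}| \leq \numberOfVertices$ is enough).

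For the first summand, \Cref{pre:expanderGeneral} gives the bound $\spectralGap \cdot \sqrt{\vol{X}\vol{\overline{X}}\vol{Y}\vol{\overline{Y}}}/\vol{V}$. Using $\vol{X} \leq (1+\degreeGap)\degree|X|$, $\vol{Y} \leq (1+\degreeGap)\degree|Y|$, $\vol{\overline{X}}, \vol{\overline{Y}} \leq (1+\degreeGap)\degree\numberOfVertices$, and $\vol{V} \geq (1-\degreeGap)\degree\numberOfVertices$, this is at most $\spectralGap \cdot \frac{(1+\degreeGap)^2}{1-\degreeGap}\degree\sqrt{|X|\cdot|Y|}$. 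Since $\degreeGap \mapsto \frac{(1+\degreeGap)^2}{1-\degreeGap}$ is increasing on $[0,1)$ and equals $1.8 < 2$ at $\degreeGap = 1/5$, the hypothesis $\degreeGap \leq 1/5$ makes this at most $2\spectralGap\degree\sqrt{|X|\cdot|Y|}$, which is exactly the second term in the claim.

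For the second summand, I would write $\vol{X} = \degree|X|\alpha_X$, $\vol{Y} = \degree|Y|\alpha_Y$, and $\vol{V} = \degree\numberOfVertices\alpha_V$ with all three factors $\alpha_X, \alpha_Y, \alpha_V \in [1-\degreeGap, 1+\degreeGap]$, so that the summand equals $\degree\frac{|X|\cdot|Y|}{\numberOfVertices}\big|\frac{\alpha_X\alpha_Y}{\alpha_V} - 1\big|$. It then suffices to check $\big|\frac{\alpha_X\alpha_Y}{\alpha_V} - 1\big| \leq 4\degreeGap$: the positive deviation is at most $\frac{(1+\degreeGap)^2}{1-\degreeGap} - 1 = \frac{\degreeGap(3+\degreeGap)}{1-\degreeGap}$, which is increasing in $\degreeGap$ and equals exactly $4\degreeGap$ at $\degreeGap = 1/5$, while the negative deviation is at most $1 - \frac{(1-\degreeGap)^2}{1+\degreeGap} = \frac{\degreeGap(3-\degreeGap)}{1+\degreeGap} \leq 3\degreeGap$. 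Adding the two bounds yields the corollary. I do not expect any genuine obstacle here; the only point requiring care is that the assumption $\degreeGap \leq 1/5$ is precisely what is needed for both numerical constants (the $2$ in front of $\spectralGap$ and the $4$ in front of $\degreeGap$) to come out, so the constant bookkeeping in the two estimates above should be carried out tightly rather than loosened.
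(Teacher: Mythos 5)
Your proposal is correct and follows essentially the same approach as the paper: both apply \Cref{pre:expanderGeneral}, replace the volumes via $(1-\degreeGap)\degree|S| \leq \vol{S} \leq (1+\degreeGap)\degree|S|$, and then do the same constant bookkeeping (the identity $\frac{(1+\degreeGap)^2}{1-\degreeGap} \leq 1 + 4\degreeGap$ being equivalent to $\degreeGap \leq 1/5$). The only difference is cosmetic: the paper derives separate upper and lower bounds for $|\edges{X}{Y}|$ and subtracts $\degree|X||Y|/\numberOfVertices$ at the end, whereas you split the target quantity at the outset via the triangle inequality into the spectral-expansion error and the volume-rounding error, which is equally valid.
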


\begin{proof}
Because the vertex degrees of all vertices in $G$ are bounded, we know that for each $S \subseteq V$ holds $(1-\degreeGap)\degree |S| \leq \vol{S} \leq (1+\degreeGap)\degree |S|$. \Cref{pre:expanderGeneral} gives us both an upper and a lower bound for $|\edges{X}{Y}|-\frac{\vol{X} \cdot \vol{Y}}{\vol{V}}$. We solve them for $|\edges{X}{Y}|$ and bound them separately using that $\degreeGap \leq 1/5$.
\begin{align*}
    |\edges{X}{Y}|&\geq \frac{\vol{X} \cdot \vol{Y}}{\vol{V}} - \spectralGap \cdot \frac{\sqrt{\vol{X} \vol{\overline{X}} \vol{Y} \vol{\overline{Y}}}}{\vol{V}}\\
    &\geq \frac{\vol{X} \cdot \vol{Y}}{\vol{V}} - \spectralGap \cdot \sqrt{\vol{X} \vol{Y}}\\
    &\geq \frac{(1-\degreeGap)\degree |X| \cdot (1-\degreeGap)\degree |Y|}{(1+\degreeGap)\degree \numberOfVertices} - \spectralGap (1+\degreeGap)\degree \sqrt{|X| \cdot |Y|}\\
    &\geq (1- 4\degreeGap)\degree \frac{|X| \cdot |Y|}{\numberOfVertices} - 2 \spectralGap \degree \sqrt{|X| \cdot |Y|} \textrm{ as well as}\\
    |\edges{X}{Y}|&\leq \frac{\vol{X} \cdot \vol{Y}}{\vol{V}} + \spectralGap \cdot \frac{\sqrt{\vol{X} \vol{\overline{X}} \vol{Y} \vol{\overline{Y}}}}{\vol{V}}\\
    &\leq \frac{\vol{X} \cdot \vol{Y}}{\vol{V}} + \spectralGap \cdot \sqrt{\vol{X} \vol{Y}}\\
    &\leq \frac{(1+\degreeGap)\degree |X| \cdot (1+\degreeGap)\degree |Y|}{(1-\degreeGap)\degree \numberOfVertices} + \spectralGap (1+\degreeGap)\degree \sqrt{|X| \cdot |Y|}\\
    &= \left(1+ \frac{3\degreeGap + \degreeGap^2}{1-\degreeGap}\right)\degree \frac{|X| \cdot |Y|}{\numberOfVertices} + \spectralGap (1+\degreeGap)\degree \sqrt{|X| \cdot |Y|}\\
    &\leq (1+ 4\degreeGap)\degree \frac{|X| \cdot |Y|}{\numberOfVertices} + 2 \spectralGap \degree \sqrt{|X| \cdot |Y|}.
\end{align*}

Subtracting $\degree \frac{|X| \cdot |Y|}{\numberOfVertices}$ from both inequalities and combining them proves the corollary.
\end{proof}

\section{SIRS on Stars}
\label{sec:stars}
We show that the expected survival time of the SIRS process on stars is bounded from above by a polynomial in the number of vertices that is independent of the infection rate (\Cref{lem:starSurvival}). To this end, we bound the number of times that the center gets infected and the time between two infections of the center. We use that while the center is not infected, no leaf gets infected. Hence, if all of the leaves recover before the center gets susceptible after it recovered, the infection dies out.

We first bound the expected time that it takes for all of the leaves to recover.
We refer to each clock at a vertex whose rate is the recovery (of~$1$) rate as \emph{recovery clock}.

\begin{lemma}\label{lem:starHealthyPhaseDuration}
Let $G$ be a star with $\numberOfVertices \in \N_{>0}$ leaves, and let~\contactProcess be a SIRS process on~$G$ with infection rate~\infectionRate and with deimmunization rate~\deimmunizationRate. Let~$T$ be the time that it takes for all recovery clocks of the leaves to trigger at least once. Then $\E{T} \leq \ln(\numberOfVertices) +1$.
\end{lemma}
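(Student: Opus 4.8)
The plan is to observe that each of the $\numberOfVertices$ leaves carries a recovery clock that is a Poisson process of rate $1$, so the first trigger time of the recovery clock of leaf $i$ is an $\mathrm{Exp}(1)$ random variable $X_i$, and these are mutually independent (the recovery clocks at distinct vertices are independent by construction of the process $P$). The time $T$ until \emph{all} leaf recovery clocks have triggered at least once is therefore exactly $T = \max_{i \in [\numberOfVertices]} X_i$. This is a deterministic function of the clocks alone and does not depend on the configuration dynamics of $\contactProcess$, so nothing about infection or deimmunization rates enters.

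\medskip

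Given this reduction, I would simply invoke \Cref{pre:maxExponential} with $\lambda = 1$ and $n = \numberOfVertices$: it gives $\E{T} = \E{\max_{i \in [\numberOfVertices]} X_i} < 1 + \ln(\numberOfVertices)$, which is exactly the claimed bound $\E{T} \leq \ln(\numberOfVertices) + 1$.

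\medskip

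There is essentially no obstacle here; the only point requiring a sentence of care is justifying that $T$ really equals $\max_i X_i$, i.e.\ that the relevant clocks are the per-leaf recovery clocks $N_{v_i}$ (each of rate $1$), that these are independent across leaves, and that ``all recovery clocks of the leaves trigger at least once'' is equivalent to the last of these $\numberOfVertices$ first-trigger times. Once that identification is made, the bound is immediate from \Cref{pre:maxExponential}. This lemma will then feed into the proof of \Cref{lem:starSurvival} as an upper bound on the length of a ``center-not-infected'' phase, since while the center is susceptible or recovered no leaf can be infected, and after all leaves have recovered the only way the infection persists is if the center is infected.
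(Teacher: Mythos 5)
Your proposal is correct and follows essentially the same argument as the paper: identify $T$ as the maximum of $\numberOfVertices$ independent $\mathrm{Exp}(1)$ first-trigger times from the leaves' recovery clocks and apply \Cref{pre:maxExponential}. No differences in substance.
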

\begin{proof}
The star has \numberOfVertices leaves, which all have a clock that recovers them at a rate of~$1$. For each of the clocks, the time until the first trigger happens is exponentially distributed with parameter~$1$. Hence, $T$ is calculated as the maximum of the $\numberOfVertices$ exponential distributions of the independent clocks. By \Cref{pre:maxExponential}, $\E{T} \leq \ln(\numberOfVertices) + 1$.
\end{proof}

We now use \Cref{lem:starHealthyPhaseDuration} to bound the time it takes from one infection of the center until it gets infected again or until the infection dies out.

\begin{lemma}\label{lem:starPhaseDuration}
Let $G$ be a star with $\numberOfVertices \in \N_{>0}$ leaves, and let \contactProcess be a SIRS process on $G$ with infection rate \infectionRate and with deimmunization rate \deimmunizationRate. Let $t_0 \in \R_{\geq 0}$ be a time at which the infection has not died out yet. Further, let $T \in \R_{\geq 0}$ be the first time after $t_0$ at which either the center gets infected after being susceptible or the infection dies out. Then $\E{T - t_0} \leq \ln(\numberOfVertices) + 2$.
\end{lemma}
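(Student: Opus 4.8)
The plan is to chop the interval $[t_0,T]$ at the first moment the center stops being infected. Conditioning on the configuration at time $t_0$, let $t_1 \ge t_0$ be the first time at which the center is not infected, so that $t_1 = t_0$ if the center is susceptible or recovered at $t_0$, and otherwise $t_1$ is the first triggering of the center's recovery clock after $t_0$. On $[t_0,t_1)$ the center is infected, so the infection is alive there and the center cannot get infected after being susceptible there; hence $T \ge t_1$, and consequently $\E{T - t_0} = \E{t_1 - t_0} + \E{T - t_1}$. I would bound the two summands by $1$ and by $\ln(\numberOfVertices)+1$, respectively.

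For the first summand: if the center is not infected at $t_0$ then $t_1 = t_0$; otherwise, by the memorylessness of the rate\=/$1$ Poisson recovery clock of the center, the residual time until it next triggers is $\mathrm{Exp}(1)$\=/distributed, so $\E{t_1 - t_0} \le 1$.

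For the second summand, I would invoke the defining structural property of a star: a leaf can only be infected by the center. By the definition of $T$, the center is not infected on all of $[t_1, T)$, so no leaf becomes newly infected on $[t_1,T)$; hence the set of infected leaves can only shrink on this interval, and a leaf leaves it only when its own recovery clock triggers, after which it is susceptible or recovered and (again because the center is not infected before $T$) cannot become infected again before $T$. Therefore, once every leaf's recovery clock has triggered at least once after $t_1$, no leaf is infected, and since the center is not infected either, the infection is dead. Thus $T - t_1 \le M$, where $M$ is the time needed for all $\numberOfVertices$ leaf recovery clocks to trigger at least once after $t_1$; in particular $\E{T-t_0}<\infty$. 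By memorylessness at the stopping time $t_1$, conditioned on the past $M$ is distributed as the maximum of $\numberOfVertices$ independent $\mathrm{Exp}(1)$ variables, so $\E{M}[\mathcal{F}_{t_1}] \le \ln(\numberOfVertices)+1$ by \Cref{lem:starHealthyPhaseDuration} (equivalently \Cref{pre:maxExponential}). Taking expectations yields $\E{T - t_1} \le \ln(\numberOfVertices)+1$, and adding the two bounds gives $\E{T - t_0} \le \ln(\numberOfVertices)+2$.

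The step requiring the most care is the domination $T - t_1 \le M$: one must verify that the center genuinely never becomes infected on $(t_1,T)$ (immediate from the definition of $T$, but it is exactly what rules out any reinfection of a leaf and keeps the infected-leaf count monotone), and that the Markov property may be applied at $t_1$ so that the leaf recovery clocks restart as fresh $\mathrm{Exp}(1)$ clocks. Once these are in place, the rest is a direct application of \Cref{pre:maxExponential}.
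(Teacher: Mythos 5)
Your proof is correct and follows essentially the same approach as the paper: decompose $[t_0,T]$ at the first moment the center stops being infected (the paper's $T'$, your $t_1$), bound the first piece by a single $\mathrm{Exp}(1)$ expectation, and bound the second piece by the maximum of the $n$ leaf recovery clocks via \Cref{lem:starHealthyPhaseDuration}. The only difference is that you explicitly handle the case where the center is not infected at $t_0$ by setting $t_1=t_0$, which the paper's proof leaves implicit when it assumes the center starts infected.
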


\begin{proof}
If the center starts infected, in order for either the center to get infected again after being susceptible or the infection to die out, the center has to recover first. Let $T' \in \R$ be the first time after $t_0$ at which the center recovers. As all vertices recover at a rate of~$1$, the random variable $T' - t_0$ is exponentially distributed with a parameter of~$1$.

Between $T'$ and $T$, no leaf gets infected, as the center is not infected and all edges are incident to the center. Hence, when all recovery clocks of the leaves trigger in this time interval at least once, the infection dies out. Therefore, the last point in time after~$T'$ at which any of these recovery clocks trigger is an upper bound for~$T$. By \Cref{lem:starHealthyPhaseDuration}, the expected time for this last trigger to happen is at most $\ln(\numberOfVertices) +1$. That gives us
\begin{align*}
    \E{T -t_0} &= \E{T - T' + T' - t_0}\\
    &= \E{T-T'} + \E{T' - t_0}\\
    &\leq \ln(\numberOfVertices) + 2.\qedhere
\end{align*}
\end{proof}

Next, we bound the probability from below that when starting with an infected center, the infection dies out before the center gets infected again. We use this later to get an upper bound on the number of times that the center gets infected in total.

\begin{lemma}\label{lem:starDieOutProbability}
Let $G$ be a star with $\numberOfVertices \in \N_{>0}$ leaves, and let \contactProcess be a SIRS process on $G$ with infection rate \infectionRate and with deimmunization rate \deimmunizationRate. Let $t_0 \in \R_{\geq 0}$ be a time at which the center is infected. Further, let $E_0$ be the event that the infection dies out after $t_0$ before the center gets infected again (after being recovered in between). Then for sufficiently large \numberOfVertices, it holds that $\Pr{E_0} \geq \frac{1}{4} \numberOfVertices^{- \deimmunizationRate}$.
\end{lemma}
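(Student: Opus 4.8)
The plan is to zoom in on a single \emph{center-recovered phase} --- the interval that begins when the center recovers for the first time after $t_0$ and ends when it becomes susceptible again. The point of the star topology is that every edge is incident to the center, so while the center is recovered \emph{no} vertex can become infected, and the set of infected leaves can only shrink. Consequently, if during this phase every leaf fires its recovery clock, then all leaves become non-infected and the infection dies out while the center is still recovered --- in particular, before the center is ever re-infected. I would therefore prove the lemma by lower-bounding the probability of exactly this scenario.

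Concretely, first I would let $\sigma$ be the first time after $t_0$ at which the center recovers; since the center is infected at $t_0$ and stays infected until its recovery clock fires, $\sigma$ is almost surely finite. Measured from $\sigma$, let $\tau$ be the waiting time until the center's deimmunization clock fires (so $\tau$ is exponentially distributed with parameter $\deimmunizationRate$), and let $M$ be the waiting time until the last of the $\numberOfVertices$ leaf recovery clocks has fired at least once. By memorylessness of the Poisson clocks, $M$ is distributed as the maximum of $\numberOfVertices$ independent $\mathrm{Exp}(1)$ random variables (as already observed in the proof of \Cref{lem:starHealthyPhaseDuration}), and $M$ is independent of $\tau$. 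The key observation is then that $\{M < \tau\} \subseteq E_0$: on this event the center stays recovered throughout $[\sigma,\sigma+M]$, so no vertex can be infected in that interval, and by time $\sigma+M$ every leaf has fired its recovery clock and is hence non-infected; thus the infection has died out by time $\sigma+M < \sigma+\tau$, with the center never re-infected in between. Note that the (arbitrary) configuration of the leaves at time $\sigma$ plays no role here: all we use is that every infected leaf is among the $\numberOfVertices$ leaves whose recovery clocks we track.

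It then remains to lower-bound $\Pr{M < \tau}$. Since $\{M \le \ln\numberOfVertices\} \cap \{\tau > \ln\numberOfVertices\} \subseteq \{M < \tau\}$ and $M$ and $\tau$ are independent, $\Pr{M < \tau} \geq \Pr{M \le \ln\numberOfVertices}\cdot\Pr{\tau > \ln\numberOfVertices}$; here $\Pr{M \le \ln\numberOfVertices} = (1 - 1/\numberOfVertices)^{\numberOfVertices}$ because $M$ is a maximum of $\numberOfVertices$ independent $\mathrm{Exp}(1)$ variables, and $\Pr{\tau > \ln\numberOfVertices} = \numberOfVertices^{-\deimmunizationRate}$. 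As $(1 - 1/\numberOfVertices)^{\numberOfVertices}\to 1/\mathrm{e} > 1/4$, this product is at least $\frac14\,\numberOfVertices^{-\deimmunizationRate}$ for all sufficiently large $\numberOfVertices$, giving the claim. I do not expect a real obstacle: the ``no re-infection while the center is recovered'' step is immediate from the star topology, and the only mild subtlety is that one must compare $M$ against the threshold $\ln\numberOfVertices$ \emph{exactly} --- a cruder estimate (for instance combining $\E{M}\le\ln\numberOfVertices+1$ with Markov's inequality, which would compare $M$ against $2(\ln\numberOfVertices+1)$) would introduce a $\deimmunizationRate$-dependent constant, or even change the exponent of $\numberOfVertices$, and thus miss the clean bound $\frac14\,\numberOfVertices^{-\deimmunizationRate}$.
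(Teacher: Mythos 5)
Your proposal is correct and matches the paper's proof essentially line for line: both isolate the first center-recovered phase after $t_0$, observe that $\{$all $\numberOfVertices$ leaf recovery clocks fire before the center's deimmunization clock$\}$ forces extinction on the star, and then lower-bound that event's probability by $\Pr{M \le \ln\numberOfVertices}\cdot\Pr{\tau > \ln\numberOfVertices} = (1-1/\numberOfVertices)^{\numberOfVertices}\cdot\numberOfVertices^{-\deimmunizationRate} \geq \tfrac14\numberOfVertices^{-\deimmunizationRate}$ using independence. Your explicit remark that the leaf configuration at the start of the phase is irrelevant, and that one must compare $M$ to $\ln\numberOfVertices$ exactly rather than via Markov, are welcome clarifications but do not change the argument.
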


\begin{proof}
In order for either the center to get infected again after being susceptible or for the infection to die out, the center has to recover first. Let $t_1 \in \R$ be the first time after $t_0$ at which the center recovers. As long as the center is in the recovered state, no vertex gets infected, as all edges of the graph are incident to the center. If all leaves recover before the center gets susceptible, the infection dies out. In order to bound the probability of this event, we consider the first time $T \in \R$ after $t_1$ at which the center gets susceptible, and we also consider the first time $T' \in \R$ after $t_1$ at which all of the recovery clocks of the leaves trigger at least once in the interval $(t_1,T']$. In particular, we use that all leaves recover before the center gets susceptible if $T'-t_1 <  \ln(\numberOfVertices)$ and $T-t_1 \geq  \ln(\numberOfVertices)$.

Each vertex recovers after a time that is exponentially distributed with parameter~$1$. As $T'$ is the first time after $t_1$ at which all of the recovery clocks of the leaves trigger at least once in the interval $(t_1,T']$, it is the maximum of~$\numberOfVertices$ exponentially distributed random variables. In order for $T' - t_1 < \ln(\numberOfVertices)$, all of those random variables have to be smaller than $\ln(\numberOfVertices)$. As all of them are independent, we get that, for sufficiently large $\numberOfVertices$,
\begin{align*}
    \Pr{T' - t_1 < \ln(\numberOfVertices)} &= \Pr{\text{Exp}(1)<\ln(\numberOfVertices)}^\numberOfVertices\\
    &= \left(1-\eulerE^{-1 \ln(\numberOfVertices)}\right)^\numberOfVertices\\
    &= \left(1 - \frac{1}{n}\right)^n\\
    &\geq \frac{1}{4}.
\end{align*}

All vertices lose their immunity at a rate of \deimmunizationRate. Hence, $T-t_1$ is exponentially distributed with parameter \deimmunizationRate. Using the exponential probability distribution, we get
\begin{align*}
    \Pr{T- t_1 \geq  \ln(\numberOfVertices)} &= \eulerE^{- \deimmunizationRate \ln(\numberOfVertices)}\\
    &= \numberOfVertices^{- \deimmunizationRate}.
\end{align*}

Now using the fact that the infection dies out when all leaves recover before the center gets susceptible and that $T- t_1$ and $T' -t_1$ are independent, we get
\begin{align*}
    \Pr{E_0} &\geq \Pr{T'-t_1 < T - t_1}\\
    &\geq \Pr{T'-t_1 <  \ln(\numberOfVertices) \land T-t_1 \geq  \ln(\numberOfVertices)}\\
    &= \Pr{T'-t_1 <  \ln(\numberOfVertices)} \cdot \Pr{ T-t_1 \geq  \ln(\numberOfVertices)}\\
    &\geq \frac{1}{4}\numberOfVertices^{- \deimmunizationRate}. \qedhere
\end{align*}
\end{proof}

Using the previous bounds, we now derive an upper bound on the expected survival time of a SIRS process on a star.

\StarSurvival

\begin{proof}
Let $S$ be the random variable that counts the number of times that the center gets infected before the infection dies out. For all $i \in \N_{\leq S+1}$, let $X_i$ be the $i$-th time at which either the center gets infected or the infection dies out (we define $X_0 = 0$). It then holds that $T = X_{S+1} = \sum_{i=0}^{S}{X_{i+1}-X_i}$. We aim to bound the expectation of this value using the generalized Wald's equation (\Cref{pre:wald}).

Let $(\filtrationContinuous{\timePoint})_{\timePoint \in \R_{\geq 0}}$ be the natural filtration of \contactProcess. By \Cref{lem:starPhaseDuration}, it holds for all $i \in \N_{\leq S}$ that $0 \leq \E{X_{i+1}-X_i}[\filtrationContinuous{X_i}] \leq \ln(\numberOfVertices) +2$. Hence, the expectations of all of the summed random variables are bounded. By \Cref{lem:starDieOutProbability}, for all $i \in \N_{\geq 1}$, the $i$-th infection of the center has a probability of at least $\frac{1}{4} \numberOfVertices^{- \deimmunizationRate}$ to be the last one if there is an $i$-th infection of the center. Therefore, $S$ is dominated by a geometrically distributed random variable $A \sim \text{Geom}(\frac{1}{4} \numberOfVertices^{- \deimmunizationRate})$. Hence, $\sum_{i=0}^{S}{X_{i+1}-X_i}$ is integrable. By \Cref{pre:wald}, we get
\begin{align*}
    \E{T}[\filtrationContinuous{0}] &= \E{\sum_{i=0}^{S}{X_{i+1}-X_i}}[\filtrationContinuous{0}]\\
    &= \E{\sum_{i=0}^{S}{\E{X_{i+1}-X_i}[\filtrationContinuous{X_i}]}}[\filtrationContinuous{0}]\\
    &\leq \E{\sum_{i=0}^{S}{\ln(\numberOfVertices)+2}}[\filtrationContinuous{0}]\\
    &= (\ln(\numberOfVertices)+2) \E{\sum_{i=0}^{S}{1}}[\filtrationContinuous{0}]\\
    &\leq (\ln(\numberOfVertices)+2) (4\numberOfVertices^{ \deimmunizationRate}+1).\qedhere
\end{align*}
\end{proof}

\section{SIRS on Expanders}
\label{sec:SIRS_clique}
We consider the SIRS process on graphs that have expanders as subgraphs. In particular, we show an exponential expected survival time for the projection of the SIRS process onto the expander when the deimmunization rate is constant and the infection rate is sufficiently high (\Cref{thm:cliqueSIRS}). Note that the exponential expected survival time and the required infection rate depend only on the size and vertex degrees of the expander. In \Cref{sec:SIRSCliqueBasic}, we begin by analyzing basic properties of the process, such as the transition rates between all of the states.

In \Cref{sec:SIRSCliqueThreshold}, we show that the expected survival time of the considered SIRS processes is exponential if $\infectionRate \geq \frac{\infectionConstant}{\degree}$ for a constant $\infectionConstant \in \R_{>1}$. We first prove that the process reaches a configuration with at least $\varepsilon \numberOfVertices$ infected vertices with sufficiently high probability. We then provide a lower bound for the expected survival time starting at such a configuration. To this end, we define a potential over the configuration space that has in a specific region a constant negative drift away from the configuration with no infected vertices. We then translate this region into bounds for the potential, allowing us to apply the negative-drift theorem (\Cref{pre:NegativeDrift}) to get an exponential expected survival time.

\subsection{The SIRS Process}\label{sec:SIRSCliqueBasic}

Let $G=(V,E)$ be a graph and let $G'= (V',E')$ be a subgraph of $G$ that is an $(\numberOfVertices,(1\pm\degreeGap)\degree,\spectralGap)$-expander. Let \contactProcess be a SIRS process  with infection rate $\infectionRate \geq \frac{\infectionConstant}{\degree}$ for a constant $\infectionConstant \in \R_{>1}$ and deimmunization rate \deimmunizationRate on $G$. Consider the projection \contactProcessProjection of \contactProcess onto $G'$. We define for all $\timeDiscrete \in \N$ the random variable $\susceptibleDiscreteShifted{\timeDiscrete} = \susceptibleDiscrete{\timeDiscrete} + \frac{\deimmunizationRate}{\infectionConstant}\numberOfVertices$. We use $\susceptibleDiscreteShifted{\timeDiscrete}$ to define the potential later. Roughly, using $\susceptibleDiscreteShifted{\timeDiscrete}$ instead of $\susceptibleDiscrete{\timeDiscrete}$ has the effect that changes of $\susceptibleDiscrete{\timeDiscrete}$ have a lower impact on the potential. Note that, at all times $t$, it holds that $\susceptibleDiscrete{\timeDiscrete} + \infectedDiscrete{\timeDiscrete} + \recoveredDiscrete{\timeDiscrete} = \numberOfVertices$, since every vertex of $G$ is always in exactly one of these three sets. Additionally, $\susceptibleDiscreteShifted{\timeDiscrete} + \infectedDiscrete{\timeDiscrete} + \recoveredDiscrete{\timeDiscrete}  = \numberOfVertices + \frac{\deimmunizationRate}{\infectionConstant}\numberOfVertices = \numberOfVerticesShifted$.

For all $\timeDiscrete \in \N$, one of the following four events occurs at step $\timeDiscrete+1$ (i.e., $\tau_{t+1}$): either a susceptible vertex is infected through an edge outside of $G'$, which we call \eventInfectionOutside{\timeDiscrete};
or a susceptible vertex is infected through an edge inside of $G'$, which we call \eventInfection{\timeDiscrete}; or an infected vertex recovers in the event \eventRecover{\timeDiscrete}; or a recovered vertex loses its immunity, which we call \eventSusceptible{\timeDiscrete}.

For each time point \timeContinuous{\timeDiscrete}, let $E_{\timeContinuous{\timeDiscrete}}(I,S)$ be the number of edges between the infected and the susceptible vertices in $G'$. At the time point \timeContinuous{\timeDiscrete}, vertices get infected by other vertices via edges inside~$G'$ at a rate of $\rateInfection{\timeDiscrete} = \infectionRate E_{\timeContinuous{\timeDiscrete}}(I,S)$, because every infected vertex infects each susceptible vertices at a rate of \infectionRate. Vertices recover from an infection at a rate of $\rateRecover{\timeDiscrete} = \infectedDiscrete{\timeDiscrete}$ and get susceptible at a rate of $\rateSusceptible{\timeDiscrete} = \deimmunizationRate \recoveredDiscrete{\timeDiscrete}$. As we only consider the states of the vertices in~$G'$, we cannot calculate the rate $\rateInfectionOutside{\timeDiscrete}$ at which susceptible vertices get infected through edges outside of $G'$, we only know that it is non-negative. Now let $\rateTotalOutside{\timeDiscrete} = \rateInfectionOutside{\timeDiscrete} +  \rateInfection{\timeDiscrete} + \rateRecover{\timeDiscrete} + \rateSusceptible{\timeDiscrete}$. We get
\begin{align*}
    \probabilityInfectionOutside{\timeDiscrete} &= \Pr{\eventInfectionOutside{\timeDiscrete}} = \frac{\rateInfectionOutside{\timeDiscrete}}{\rateTotalOutside{\timeDiscrete}} \geq 0,\\
    \probabilityInfection{\timeDiscrete} &= \Pr{\eventInfection{\timeDiscrete}} = \frac{\rateInfection{\timeDiscrete}}{\rateTotalOutside{\timeDiscrete}} = \frac{\infectionRate E_{\timeContinuous{\timeDiscrete}}(I,S)}{\rateTotalOutside{\timeDiscrete}},\\
    \probabilityRecover{\timeDiscrete} &= \Pr{\eventRecover{\timeDiscrete}} = \frac{\rateRecover{\timeDiscrete}}{\rateTotalOutside{\timeDiscrete}} = \frac{\infectedDiscrete{\timeDiscrete}}{\rateTotalOutside{\timeDiscrete}},\textrm{ and }\\
    \probabilitySusceptible{\timeDiscrete} &= \Pr{\eventSusceptible{\timeDiscrete}} = \frac{\rateSusceptible{\timeDiscrete}}{\rateTotalOutside{\timeDiscrete}} = \frac{\deimmunizationRate \recoveredDiscrete{\timeDiscrete}}{\rateTotalOutside{\timeDiscrete}}.
\end{align*}

Note that we only consider these probabilities in configurations in which at least one vertex is infected, hence $\rateTotalOutside{\timeDiscrete} \neq 0$ and the above probabilities are well-defined. We now define
\begin{align*}
    \infectedEquilibrium &= \frac{\deimmunizationRate (\infectionConstant-1)}{(1 + \deimmunizationRate) \infectionConstant}\numberOfVertices.
\end{align*}

This value is the number of infected vertices in an equilibrium configuration of a SIRS process on a clique with $\numberOfVertices$ vertices and an infection rate of $\frac{\infectionConstant}{\numberOfVertices}$. We use this value as a clique and the expanders we consider behave very similarly, thus, $\infectedEquilibrium$ is a good estimate for the number of infected vertices that $C$ tends to have on $G'$.

\subsection{Exponential survival time}\label{sec:SIRSCliqueThreshold}

We now show that the infection becomes epidemic if $\infectionRate \geq \frac{\infectionConstant}{\degree}$ for a constant $\infectionConstant \in \R_{>1}$.
We start by proving that, when starting with one infected vertex inside of the expander, the infection reaches a configuration with at least $\varepsilon \numberOfVertices$ infected vertices with sufficiently large probability.

\begin{lemma}\label{lem:farFromEdgeSIRS}
    Let $G$ be a graph, and let $G'$ be a subgraph of $G$ that is a $(\numberOfVertices,(1 \pm \degreeGap)\degree,\spectralGap)$-expander. Let $\degree\rightarrow\infty$ and $\spectralGap,\degreeGap\rightarrow0$ as $\numberOfVertices\rightarrow\infty$. Let \contactProcess be a SIRS process on $G$ with infection rate~$\infectionRate$ and with constant deimmunization rate \deimmunizationRate. Also let \contactProcess start with at least one infected vertex in $G'$ and no recovered vertices in~$G'$. Consider the projection \contactProcessProjection of \contactProcess onto $G'$.
    If $\infectionRate \geq \frac{\infectionConstant}{\degree}$ for a constant $\infectionConstant \in \R_{>1}$, then there exists an $\varepsilon \in \R_{>0}$ such that for sufficiently large \numberOfVertices, the probability that there exists a $\timeDiscrete \in \N$ with $\infectedDiscrete{\timeDiscrete} \geq \varepsilon \numberOfVertices$ is at least $\frac{1}{\numberOfVertices+2}$.
\end{lemma}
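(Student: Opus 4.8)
The plan is to follow the route sketched before the statement. Analyse the real‑valued potential $H_{\timeDiscrete} \defeq \infectedDiscrete{\timeDiscrete} - \varepsilon\,\recoveredDiscrete{\timeDiscrete}$ on the projected chain $\contactProcessProjection$, for a small constant $\varepsilon > 0$ fixed later (which will also serve as the target fraction in the statement), stop it at $T \defeq \inf\{\timeDiscrete \in \N : \infectedDiscrete{\timeDiscrete} = 0 \text{ or } \infectedDiscrete{\timeDiscrete} \geq \varepsilon\numberOfVertices\}$, show that $(H_{\timeDiscrete \wedge T})_{\timeDiscrete}$ is a submartingale for the natural filtration $(\filtrationDiscrete{\timeDiscrete})_{\timeDiscrete}$, and apply optional stopping (\Cref{pre:optionalStopping}). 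Since the chain starts with no recovered vertices in $G'$, $H_0 = \infectedDiscrete{0} \geq 1$ (we may assume $\infectedDiscrete{0} < \varepsilon\numberOfVertices$, otherwise there is nothing to prove), while at time $T$ either the infection has died out, so $H_T = -\varepsilon\recoveredDiscrete{T} \leq 0$, or $\infectedDiscrete{T} \geq \varepsilon\numberOfVertices$, in which case $H_T \leq \infectedDiscrete{T} \leq \numberOfVertices$. Consequently $1 \leq \E{H_0} \leq \E{H_T} \leq \numberOfVertices\cdot\Pr{\exists\,\timeDiscrete:\infectedDiscrete{\timeDiscrete}\geq\varepsilon\numberOfVertices}$, which gives the bound $1/\numberOfVertices$, hence the stated $1/(\numberOfVertices+2)$.

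The real work is the submartingale inequality $\E{H_{\timeDiscrete+1}-H_{\timeDiscrete}}[\filtrationDiscrete{\timeDiscrete}]\geq 0$ on $\{0 < \infectedDiscrete{\timeDiscrete} < \varepsilon\numberOfVertices\}$. From the transition probabilities in \Cref{sec:SIRSCliqueBasic} — an infection (through an inside or an outside edge) changes $H$ by $+1$, a recovery by $-(1+\varepsilon)$, a deimmunization by $+\varepsilon$ — the expected one‑step change equals $\frac{1}{\rateTotalOutside{\timeDiscrete}}\bigl(\rateInfectionOutside{\timeDiscrete} + \infectionRate E_{\timeContinuous{\timeDiscrete}}(I,S) - (1+\varepsilon)\infectedDiscrete{\timeDiscrete} + \varepsilon\deimmunizationRate\recoveredDiscrete{\timeDiscrete}\bigr)$, and since $\rateInfectionOutside{\timeDiscrete}\geq 0$ — which is precisely where the projection is used, the unknown outside rate can only help — it suffices to establish $\infectionRate E_{\timeContinuous{\timeDiscrete}}(I,S) + \varepsilon\deimmunizationRate\recoveredDiscrete{\timeDiscrete} \geq (1+\varepsilon)\infectedDiscrete{\timeDiscrete}$. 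I would split on the ratio $\recoveredDiscrete{\timeDiscrete}/\infectedDiscrete{\timeDiscrete}$. If $\recoveredDiscrete{\timeDiscrete} \geq \frac{1+\varepsilon}{\varepsilon\deimmunizationRate}\infectedDiscrete{\timeDiscrete}$, the deimmunization term alone dominates $(1+\varepsilon)\infectedDiscrete{\timeDiscrete}$ and we are done using only $E_{\timeContinuous{\timeDiscrete}}(I,S)\geq 0$. Otherwise $\recoveredDiscrete{\timeDiscrete} = \bigO{\infectedDiscrete{\timeDiscrete}}$, and I would lower bound $E_{\timeContinuous{\timeDiscrete}}(I,S)$ by writing it as $\edges{I}{\overline{I}} - \edges{I}{R}$, bounding $\edges{I}{\overline{I}}$ from below by \Cref{pre:expanderComplement} (giving $\geq (1-\smallO{1})(1-\varepsilon)\degree\infectedDiscrete{\timeDiscrete}$ since $\infectedDiscrete{\timeDiscrete}<\varepsilon\numberOfVertices$) and $\edges{I}{R}$ from above by \Cref{pre:expanderbroad}; the point is that $\recoveredDiscrete{\timeDiscrete}=\bigO{\infectedDiscrete{\timeDiscrete}}$ makes the error term $\spectralGap\degree\sqrt{\infectedDiscrete{\timeDiscrete}\recoveredDiscrete{\timeDiscrete}}$ of order $\smallO{\degree\infectedDiscrete{\timeDiscrete}}$ and the main term $\degree\frac{\infectedDiscrete{\timeDiscrete}\recoveredDiscrete{\timeDiscrete}}{\numberOfVertices}$ of order $\bigO{\varepsilon\degree\infectedDiscrete{\timeDiscrete}}$, so that $E_{\timeContinuous{\timeDiscrete}}(I,S)\geq (1-\bigO{\varepsilon}-\smallO{1})\degree\infectedDiscrete{\timeDiscrete}$. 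Then $\infectionRate E_{\timeContinuous{\timeDiscrete}}(I,S)\geq \tfrac{\infectionConstant}{\degree}E_{\timeContinuous{\timeDiscrete}}(I,S)\geq \infectionConstant(1-\bigO{\varepsilon}-\smallO{1})\infectedDiscrete{\timeDiscrete}\geq(1+\varepsilon)\infectedDiscrete{\timeDiscrete}$ once $\varepsilon$ is a small enough constant (using $\infectionConstant>1$) and $\numberOfVertices$ is large.

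The main obstacle is exactly this drift estimate, and its delicate part is the regime in which the susceptible vertices of $G'$ have been nearly exhausted — that is, $\recoveredDiscrete{\timeDiscrete}$ is a constant fraction of $\numberOfVertices$ — since then the mixing lemma gives no usable lower bound on $E_{\timeContinuous{\timeDiscrete}}(I,S)$ and non‑negativity of the drift must come entirely from $\varepsilon\deimmunizationRate\recoveredDiscrete{\timeDiscrete}$; this is what forces the target fraction $\varepsilon$ to be chosen small relative to $\deimmunizationRate$ (and, if $\deimmunizationRate<1$, one should take the coefficient of $\recoveredDiscrete{\timeDiscrete}$ in $H$ and the target fraction as two separate small constants, the latter the smaller of the two). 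The remaining ingredients are routine: $|H_{\timeDiscrete+1}-H_{\timeDiscrete}|\leq 1+\varepsilon$ holds deterministically, so the second hypothesis of \Cref{pre:optionalStopping} holds with $c = 1+\varepsilon$; and $\E{T}<\infty$ because, on the finite graph $G$, the total rate $\rateTotalOutside{\timeDiscrete}$ is bounded, so from any configuration with $0<\infectedDiscrete{\timeDiscrete}<\varepsilon\numberOfVertices$ the probability that the next $\lceil\varepsilon\numberOfVertices\rceil$ steps are all recoveries — which drives $I$ to $0$ — is bounded below by a positive constant, giving $T$ a geometric tail.
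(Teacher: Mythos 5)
Your plan is correct and uses the same core machinery as the paper: the linear potential $\potentialIMinusR{\timeDiscrete} = \infectedDiscrete{\timeDiscrete} - \varepsilon_H\recoveredDiscrete{\timeDiscrete}$, verified to be a submartingale up to a stopping time, followed by optional stopping. The one genuine difference is the stopping time. The paper stops at $T = \inf\{\timeDiscrete : \potentialIMinusR{\timeDiscrete} \le 0 \text{ or } \susceptibleDiscrete{\timeDiscrete} < (1-\varepsilon_S)\numberOfVertices\}$; the second condition keeps both $\infectedDiscrete{\timeDiscrete}$ and $\recoveredDiscrete{\timeDiscrete}$ below $\varepsilon_S\numberOfVertices$ while the process runs, which lets the paper bound $\rateInfection{\timeDiscrete}$ in one shot via the decomposition $\edges{I}{S} = \edges{I\cup R}{S} - \edges{R}{S}$ together with \Cref{pre:expanderComplement} applied to $X = I\cup R$, $\overline X = S$, yielding a linear form in $\infectedDiscrete{\timeDiscrete}$ and $\recoveredDiscrete{\timeDiscrete}$ whose two coefficients are made non-negative by choosing $\varepsilon_H < \infectionConstant - 1$ first and then $\varepsilon_S$ small. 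You instead stop when $\infectedDiscrete{\timeDiscrete}$ hits $\varepsilon\numberOfVertices$ (or $0$), leaving $\recoveredDiscrete{\timeDiscrete}$ uncontrolled, and this is exactly what forces your case split on $\recoveredDiscrete{\timeDiscrete}/\infectedDiscrete{\timeDiscrete}$ and the decomposition $\edges{I}{S} = \edges{I}{\overline I} - \edges{I}{R}$. Your two-constant fix (distinct $\varepsilon_H$ in the potential and target fraction $\varepsilon \ll \varepsilon_H$) is what makes case 2 close, and I'd only flag that it is needed more broadly than ``if $\deimmunizationRate < 1$'': with a single constant, the main error term $\degree\infectedDiscrete{\timeDiscrete}\recoveredDiscrete{\timeDiscrete}/\numberOfVertices$ is at most $\frac{1+\varepsilon}{\deimmunizationRate}\degree\infectedDiscrete{\timeDiscrete}$, which does not shrink with $\varepsilon$ and must be absorbed by the slack $\infectionConstant - 1$, so a single constant only works when $\deimmunizationRate > \infectionConstant/(\infectionConstant - 1)$. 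With $\varepsilon$ chosen small relative to the fixed $\varepsilon_H$ and $\deimmunizationRate$, everything goes through and the optional-stopping step gives the bound as you state (your cruder upper bound $\E{\potentialIMinusR{T}} \le \numberOfVertices\,\Pr{\potentialIMinusR{T}>0}$ actually yields $1/\numberOfVertices$, which is fine). Net: both routes buy the same result; the paper's choice of stopping time is slightly more economical because it bounds $\recoveredDiscrete{\timeDiscrete}$ for free and so avoids the case analysis.
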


\begin{proof}
    Let $\infectionConstantMinusOne=\infectionConstant-1$. Note that \infectionConstantMinusOne is positive because $\infectionConstant>1$. Let $\varepsilon_H, \varepsilon_S\in \R_{>0}$ be a constants that we specify later. We define for all $\timeDiscrete \in \N$ the potential  $\potentialIMinusR{\timeDiscrete} =\potentialFunctionIMinusR[\infectedDiscrete{\timeDiscrete},\recoveredDiscrete{\timeDiscrete}]= \infectedDiscrete{\timeDiscrete} - \varepsilon_H\recoveredDiscrete{\timeDiscrete}$. Additionally, we define the stopping time $T = \inf\{\timeDiscrete \in \N \mid \potentialIMinusR{\timeDiscrete} \leq 0 \lor \susceptibleDiscrete{\timeDiscrete} < (1-\varepsilon_S)\numberOfVertices\}$ and the natural filtration $(\filtrationContinuous{\timePoint})_{\timePoint \in \R_{\geq 0}}$ of \contactProcess. We aim to show that $(\potentialIMinusR{\timeDiscrete})_{\timeDiscrete \in \N}$ is a sub-martingale until $T$. This allows us to apply the optional-stopping theorem (\Cref{pre:optionalStopping}) to bound $\E{\potentialIMinusR{T}}$ from below. The law of total expectation then yields a lower bound of $\frac{1}{\numberOfVertices+2}$ for $\Pr{\potentialIMinusR{T} > 0}$. We conclude the proof by showing that if $\potentialIMinusR{T} > 0$, then $\infectedDiscrete{T} \geq \varepsilon \numberOfVertices$.

    We first bound $\rateInfection{\timeDiscrete}$ using \Cref{pre:expanderComplement} for all times $\timeDiscrete < T$. We get
    \begin{align*}
        \rateInfection{\timeDiscrete} &= \infectionRate \edges{I}{S}\\
        &\geq\infectionRate\left(\edges{I+R}{S}-(1 + \degreeGap)\degree \recoveredDiscrete{\timeDiscrete}\right)\\
        &\geq \infectionRate\left((1-\spectralGap)(1-3\degreeGap)\frac{\degree (\infectedDiscrete{\timeDiscrete}+\recoveredDiscrete{\timeDiscrete})\susceptibleDiscrete{\timeDiscrete}}{\numberOfVertices}-(1 + \degreeGap)\degree \recoveredDiscrete{\timeDiscrete}\right)\\
        &\geq \infectionRate\left((1-\spectralGap)(1-3\degreeGap)(1-\varepsilon_S)\degree (\infectedDiscrete{\timeDiscrete}+\recoveredDiscrete{\timeDiscrete})-(1+\degreeGap)\degree \recoveredDiscrete{\timeDiscrete}\right)\\
        &\geq \frac{\infectionConstant}{\degree}\left((1-\spectralGap-3\degreeGap-\varepsilon_S)\degree (\infectedDiscrete{\timeDiscrete}+\recoveredDiscrete{\timeDiscrete})-(1+\degreeGap)\degree \recoveredDiscrete{\timeDiscrete}\right)\\
        &\geq \infectionConstant \infectedDiscrete{\timeDiscrete} - (\spectralGap+4\degreeGap+\varepsilon_s)\infectionConstant(\infectedDiscrete{\timeDiscrete}+\recoveredDiscrete{\timeDiscrete}).
    \end{align*}

    We now bound for all $\timeDiscrete \in \N$ the drift $\E{(\potentialIMinusR{\timeDiscrete+1}-\potentialIMinusR{\timeDiscrete}) \cdot \indicator{t<T}} [\filtrationDiscrete{\timeDiscrete}]$. To improve readability, we omit the multiplicative $\indicator{t<T}$ in all of the terms.
    \begin{align*}
        \E{\potentialIMinusR{\timeDiscrete+1}-\potentialIMinusR{\timeDiscrete}} [\filtrationDiscrete{\timeDiscrete}] &= ( \probabilityInfection{\timeDiscrete}+\probabilityInfectionOutside{\timeDiscrete}) \cdot \left(\potentialFunctionIMinusR[\infectedDiscrete{\timeDiscrete}+1,\recoveredDiscrete{\timeDiscrete}]-\potentialIMinusR{\timeDiscrete}\right)\\
        &\quad+ \probabilityRecover{\timeDiscrete} \left(\potentialFunctionIMinusR[\infectedDiscrete{\timeDiscrete}-1,\recoveredDiscrete{\timeDiscrete}+1]-\potentialIMinusR{\timeDiscrete}\right) + \probabilitySusceptible{\timeDiscrete} \left(\potentialFunctionIMinusR[\infectedDiscrete{\timeDiscrete},\recoveredDiscrete{\timeDiscrete}-1]-\potentialIMinusR{\timeDiscrete}\right)\\
        &= \probabilityInfection{\timeDiscrete} + \probabilityInfectionOutside{\timeDiscrete} - \probabilityRecover{\timeDiscrete}(1+\varepsilon_H) + \probabilitySusceptible{\timeDiscrete} \varepsilon_H\\
        &\geq \probabilityInfection{\timeDiscrete} - \probabilityRecover{\timeDiscrete}(1+\varepsilon_H) + \probabilitySusceptible{\timeDiscrete} \varepsilon_H\\
        &\geq \left(\infectionConstant \infectedDiscrete{\timeDiscrete} - (\spectralGap+4\degreeGap+\varepsilon_s)\infectionConstant(\infectedDiscrete{\timeDiscrete}+\recoveredDiscrete{\timeDiscrete}) - \infectedDiscrete{\timeDiscrete}(1+\varepsilon_H) + \deimmunizationRate \recoveredDiscrete{\timeDiscrete} \varepsilon_H\right)/\rateTotalOutside{\timeDiscrete}\\
        &= \frac{\left(\infectionConstantMinusOne-\varepsilon_H-(\spectralGap+4\degreeGap+\varepsilon_S)\infectionConstant\right)\infectedDiscrete{\timeDiscrete} + \left(\deimmunizationRate \varepsilon_H - (\spectralGap+4\degreeGap+\varepsilon_S)\infectionConstant\right)\recoveredDiscrete{\timeDiscrete}}{\rateTotalOutside{\timeDiscrete}}\\
        &\geq 0.
    \end{align*}

    The last inequality holds by first choosing $\varepsilon_H < \infectionConstantMinusOne$ and then choosing $\varepsilon_S$ small enough. Then for sufficiently small $\spectralGap$ and $\degreeGap$, both of the summands are positive.

    Note that $\E{T} < \infty$ because in each step $\timeDiscrete \in \N_{<T}$, there is a non-zero probability (independent of \timeDiscrete) to recover a vertex, hence there is always a non-zero probability to recover all vertices within the next \numberOfVertices steps, which stops the process. Therefore, by applying the optional-stopping theorem (\Cref{pre:optionalStopping}), we get $\E{\potentialIMinusR{T}} \geq \E{\potentialIMinusR{0}}$.

    By the law of total expectation, we get that
    \begin{align*}
        \E{\potentialIMinusR{T}} &= \E{\potentialIMinusR{T}} [ \potentialIMinusR{T} \leq 0] \cdot \Pr{\potentialIMinusR{T} \leq 0} + \E{\potentialIMinusR{T}} [ \potentialIMinusR{T} > 0] \cdot \Pr{\potentialIMinusR{T} > 0}\\
        &=\E{\potentialIMinusR{T}} [ \potentialIMinusR{T} \leq 0] \cdot (1-\Pr{\potentialIMinusR{T} > 0}) + \E{\potentialIMinusR{T}} [ \potentialIMinusR{T} > 0] \cdot \Pr{\potentialIMinusR{T} > 0}.
    \end{align*}

    Because of the definition of $T$ and the fact that \potentialIMinusRAlone changes by at most $1+ \varepsilon_H \leq 2$ in one step, we get that $\potentialIMinusR{T} \geq -2$. We also know that $\potentialIMinusR{T} \leq \numberOfVertices$ as $\infectedDiscrete{T} \leq \numberOfVertices$. By definition of \contactProcess, it holds that $\potentialIMinusR{0} \geq 1$. By substituting \E{\potentialIMinusR{T}} in $\E{\potentialIMinusR{T}} \geq \E{\potentialIMinusR{0}}$ and solving for \Pr{\potentialIMinusR{T} > 0}, we get
    \begin{align*}
        \Pr{\potentialIMinusR{T} > 0} &\geq \frac{1 - \E{\potentialIMinusR{T}} [ \potentialIMinusR{T} \leq 0]}{\E{\potentialIMinusR{T}} [ \potentialIMinusR{T} > 0] - \E{\potentialIMinusR{T}} [ \potentialIMinusR{T} \leq 0]}\\
        & \geq \frac{1}{\numberOfVertices+2}.
    \end{align*}

    Now assume $\potentialIMinusR{T} > 0$. By the definition of $T$, it then holds that $\susceptibleDiscrete{T} < (1-\varepsilon_S)\numberOfVertices$. Therefore,
    \begin{align*}
        \infectedDiscrete{T} + \recoveredDiscrete{T} = \numberOfVertices- \susceptibleDiscrete{T} > \varepsilon_S\numberOfVertices.
    \end{align*}
    With $\potentialIMinusR{T} > 0$, we then get $\infectedDiscrete{T} > \varepsilon_H\recoveredDiscrete{T}$, which implies
    \begin{align*}
        \left(1+\varepsilon_H^{-1}\right)\infectedDiscrete{T} &> \varepsilon_S\numberOfVertices.
    \end{align*}
    Choosing $\varepsilon$ accordingly concludes the proof.
\end{proof}

To show that the infection survives long from that point onward, we define a potential function that assigns a real number to each configuration of the process, and we analyze its drift. The potential function is an adjusted version of the Lyapunov function of \textcite{korobeinikov2002lyapunov}. We first define a helper function \lyapunovHelper.

\begin{definition}\label{def:laypunocHelper}
Let $\lyapunovHelper\colon (\R_{> 0})^2 \to \R$ be such that, for all $x,x^* \in \R_{>0}$, we have
\begin{align*}
    \lyapunovHelper[x^*,x] &= x^* \left( \frac{x}{x^*} - \ln \frac{x}{x^*}-1\right).\qedhere
\end{align*}
\end{definition}

Note that the derivative $\frac{\mathrm{d} f(x^*,x)}{\mathrm{d}x}=1- \frac{x^*}{x}$. Hence, for a given $x^* \in \R_{>0}$, the value $x = x^*$ is the only local optimum of $\lyapunovHelper[x^*,x]$, and it is a global minimum. We now define the potential function that we use in the following lemmas.

\begin{definition}\label{def:lyapunovPotential}
Let $G$ be a graph and let $G'$ be a subgraph of $G$ that is an $(\numberOfVertices,(1 \pm \degreeGap)\degree,\spectralGap)$-expander. Let \contactProcess be a SIRS process on $G$ with infection rate $\infectionRate \geq \frac{\infectionConstant}{\degree}$ for a constant $\infectionConstant \in \R_{>1}$ and with deimmunization rate \deimmunizationRate. Consider the projection \contactProcessProjection of \contactProcess onto $G'$. Let $\numberOfVerticesShifted = \left(1+\frac{\deimmunizationRate}{\infectionConstant}\right)\numberOfVertices$.
For all $\timeDiscrete \in \N$, we define $\potentialSIRSClique{\lyapunovParameter}{\timeDiscrete}$ as
\begin{align*}
    \potentialSIRSClique{\lyapunovParameter}{\timeDiscrete} = \lyapunovFunction{\lyapunovParameter}[\susceptibleDiscreteShifted{\timeDiscrete},\infectedDiscrete{\timeDiscrete}] = \lyapunovHelper[\numberOfVerticesShifted,\susceptibleDiscreteShifted{\timeDiscrete}]+ \lyapunovHelper[\infectedEquilibrium,\infectedDiscrete{\timeDiscrete}].
\end{align*}
Further, let $(\filtrationContinuous{\timePoint})_{\timePoint \in \R_{\geq 0}}$ be the natural filtration of \contactProcess. We define for all $\timeDiscrete \in \N$ the \emph{drift} $\driftSIRSClique{\lyapunovParameter}{\timeDiscrete}$ as
\begin{align*}
    \driftSIRSClique{\lyapunovParameter}{\timeDiscrete} &= \E{\potentialSIRSClique{\lyapunovParameter}{\timeDiscrete+1}-\potentialSIRSClique{\lyapunovParameter}{\timeDiscrete}}[\filtrationDiscrete{\timeDiscrete}]. \qedhere
\end{align*}
\end{definition}

The potential \lyapunovFunction{\lyapunovParameter} becomes very large when the infection is close to dying out. We aim to show that the process tends to drift away from that high-potential region when we ignore the impact of the vertices outside of the considered subgraph and that there is a region in which the extra vertices only enlarge that drift.
To calculate the differences of the \lyapunovFunction{\lyapunovParameter} values in the drift, we first have a look at \lyapunovHelper.

\begin{lemma}\label{lem:lyapunovHelper}
    Let $x^* \in \R_{>0}$ and $x \in \R_{>2}$. Then
    \begin{align*}
        &\lyapunovHelper[x^*,x+1] - \lyapunovHelper[x^*,x] \leq 1 - \frac{x^*}{x} + \frac{x^*}{x(x+1)} \textrm{ and}\\
        &\lyapunovHelper[x^*,x-1] - \lyapunovHelper[x^*,x] \leq -\left(1 - \frac{x^*}{x} - \frac{x^*}{x(x-1)}\right).\qedhere
    \end{align*}
\end{lemma}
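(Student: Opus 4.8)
The plan is to compute both differences directly from \Cref{def:laypunocHelper} and then use a convexity/Taylor-type estimate for the logarithmic terms. Writing out the definition,
\[
    \lyapunovHelper[x^*,x+1] - \lyapunovHelper[x^*,x]
    = x^*\Bigl(\tfrac{x+1}{x^*} - \ln\tfrac{x+1}{x^*} - 1\Bigr)
      - x^*\Bigl(\tfrac{x}{x^*} - \ln\tfrac{x}{x^*} - 1\Bigr)
    = 1 - x^*\bigl(\ln(x+1) - \ln x\bigr)
    = 1 - x^*\ln\!\Bigl(1 + \tfrac1x\Bigr),
\]
and symmetrically $\lyapunovHelper[x^*,x-1] - \lyapunovHelper[x^*,x] = -1 + x^*\ln\!\bigl(\tfrac{x}{x-1}\bigr) = -1 + x^*\ln\!\bigl(1 + \tfrac{1}{x-1}\bigr)$. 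So everything reduces to two-sided bounds on $\ln(1+u)$ for the relevant small positive $u$.

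For the first inequality I would use the lower bound $\ln(1+u) \geq u - \tfrac{u^2}{2}$, valid for all $u \geq 0$ (indeed $\ln(1+u) \geq \frac{u}{1+u}\cdot\frac{?}{}$; more simply, the function $u \mapsto \ln(1+u) - u + u^2/2$ has nonnegative derivative $\frac{u^2}{1+u}$ for $u\ge 0$ and vanishes at $0$). With $u = 1/x$ this gives $x^*\ln(1+\tfrac1x) \geq \tfrac{x^*}{x} - \tfrac{x^*}{2x^2} \geq \tfrac{x^*}{x} - \tfrac{x^*}{x(x+1)}$, the last step because $\tfrac{1}{2x^2} \leq \tfrac{1}{x(x+1)}$ fails in general — so instead I should note $\tfrac{1}{2x^2}\le \tfrac1{x(x+1)}$ is equivalent to $x+1\le 2x$, i.e.\ $x\ge 1$, which holds since $x>2$. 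Substituting back yields $\lyapunovHelper[x^*,x+1]-\lyapunovHelper[x^*,x] = 1 - x^*\ln(1+\tfrac1x) \leq 1 - \tfrac{x^*}{x} + \tfrac{x^*}{x(x+1)}$, as claimed.

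For the second inequality I need an upper bound on $\ln(1+u)$ with $u = \tfrac{1}{x-1}$; the standard estimate $\ln(1+u) \leq u$ gives $x^*\ln(1+\tfrac1{x-1}) \leq \tfrac{x^*}{x-1}$, which is not yet tight enough, since I want the bound expressed with $\tfrac{x^*}{x}$ as the main term. Writing $\tfrac{1}{x-1} = \tfrac1x + \tfrac{1}{x(x-1)}$ and using $\ln(1+u)\le u$ directly on $u=\tfrac1{x-1}$ already produces $-1 + x^*\ln(1+\tfrac1{x-1}) \leq -1 + \tfrac{x^*}{x} + \tfrac{x^*}{x(x-1)} = -\bigl(1 - \tfrac{x^*}{x} - \tfrac{x^*}{x(x-1)}\bigr)$, which is exactly the desired right-hand side. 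Here the hypothesis $x > 2$ guarantees $x - 1 > 1 > 0$ so all the fractions are well-defined and positive.

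The computation is essentially routine; the only point requiring a little care is picking the two inequalities for $\ln(1+u)$ with the right error terms so that the bounds land precisely on $\tfrac{x^*}{x(x+1)}$ and $\tfrac{x^*}{x(x-1)}$ rather than on a crude $\tfrac{x^*}{x^2}$-type remainder — in particular, for the first bound one must verify $\tfrac{1}{2x^2} \leq \tfrac{1}{x(x+1)}$, which holds iff $x \geq 1$, hence certainly under $x > 2$. I do not expect any genuine obstacle. One should double-check the edge behaviour: the lemma restricts to $x > 2$ precisely so that $x-1 > 1$ keeps $\lyapunovHelper[x^*,x-1]$ in the domain $\R_{>0}$ of $f$ and keeps the denominators $x(x-1)$ safely away from zero.
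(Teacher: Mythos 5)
Your proof is correct and takes essentially the same route as the paper: compute the difference explicitly to reduce to a bound on $\ln(1+u)$, then apply a two-sided logarithm estimate. The only variation is in the first inequality, where the paper uses the sharper $\ln(1+\tfrac1x) \geq \tfrac{1}{x+1}$ (so that the rewrite $\tfrac{x^*}{x+1} = \tfrac{x^*}{x} - \tfrac{x^*}{x(x+1)}$ lands on the claim immediately), while you use the Taylor bound $\ln(1+u) \geq u - \tfrac{u^2}{2}$ followed by $\tfrac{1}{2x^2} \leq \tfrac{1}{x(x+1)}$ for $x \geq 1$; both are valid and the second inequality is handled identically. Minor stylistic note: the parenthetical ``(indeed $\ln(1+u) \geq \frac{u}{1+u}\cdot\frac{?}{}$; \dots)'' and the false start ``fails in general --- so instead'' should be cleaned up, but the mathematics is sound.
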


\begin{proof}

We use that for all $y \in \R_{>1}$, it holds that
\begin{align*}
    \frac{1}{y+1} < \ln(y+1)-\ln(y) <  \frac{1}{y}.
\end{align*}

Together with the definition of \lyapunovHelper, we have
\begin{align*}
    \lyapunovHelper[x^*,x+1] - \lyapunovHelper[x^*,x] &= x^* \left( \frac{x+1}{x^*} - \ln \frac{x+1}{x^*}-1\right) - x^* \left( \frac{x}{x^*} - \ln \frac{x}{x^*}-1\right)\\
    &= 1 - x^* \big(\!\ln(x+1)-\ln x\big)\\
    &\leq 1 - \frac{x^*}{x+1}.
\end{align*}

For the second part, we get
\begin{align*}
    \lyapunovHelper[x^*,x-1] - \lyapunovHelper[x^*,x] &= x^* \left( \frac{x-1}{x^*} - \ln \frac{x-1}{x^*}-1\right) - x^* \left( \frac{x}{x^*} - \ln \frac{x}{x^*}-1\right)\\
    &= -1 + x^* \big(\!\ln x-\ln(x-1)\big)\\
    &\leq -\left(1 - \frac{x^*}{x-1}\right).
\end{align*}

Noting that $\frac{x^*}{x+1} = \frac{x^*}{x}-\frac{x^*}{x(x+1)}$ and $\frac{x^*}{x-1} = \frac{x^*}{x}+\frac{x^*}{x(x-1)}$ concludes the proof.
\end{proof}

To bound the drift, we first show that there is an $\varepsilon \in \R_{> 0}$ such that if there are less than $\varepsilon \numberOfVertices$ infected vertices, the drift is maximized when $\rateInfectionOutside{\timeDiscrete}$ is 0.

\begin{lemma}\label{lem:increasedInfection}
    Let $G$ be a graph, and let $G'$ be a subgraph of $G$ that is an $(\numberOfVertices,(1 \pm \degreeGap)\degree,\spectralGap)$-expander. Let \contactProcess be a SIRS process on $G$ with infection rate~$\infectionRate$ and with constant deimmunization rate \deimmunizationRate. Consider the projection \contactProcessProjection of \contactProcess onto $G'$. Let $\edges{I}{S}$ be the amount of edges between the infected and the susceptible vertices at time $\timeDiscrete$, and let $\rateTotal{\timeDiscrete} = \frac{\infectionConstant}{\degree}\edges{I}{S} + \rateRecover{\timeDiscrete} + \rateSusceptible{\timeDiscrete}$.
    If $\infectionRate \geq \frac{\infectionConstant}{\degree}$ for a constant $\infectionConstant \in \R_{>1}$, then there exists a constant $\varepsilon \in \R_{>0}$ such that, for all $\timeDiscrete \in \N$ and sufficiently large $\numberOfVertices$, if $2 \leq \infectedDiscrete{\timeDiscrete} \leq \varepsilon \numberOfVertices$, then
    \begin{align*}
        \rateTotal{\timeDiscrete} \cdot \driftSIRSClique{\lyapunovParameter}{\timeDiscrete} &\leq\frac{\infectionConstant}{\degree}\edges{I}{S} \cdot \left(\lyapunovFunction{\lyapunovParameter}[\susceptibleDiscreteShifted{\timeDiscrete}-1,\infectedDiscrete{\timeDiscrete}+1]- \lyapunovFunction{\lyapunovParameter}[\susceptibleDiscreteShifted{\timeDiscrete},\infectedDiscrete{\timeDiscrete}]\right)\\
        &\quad + \rateRecover{\timeDiscrete} \cdot \left(\lyapunovFunction{\lyapunovParameter}[\susceptibleDiscreteShifted{\timeDiscrete},\infectedDiscrete{\timeDiscrete}-1]- \lyapunovFunction{\lyapunovParameter}[\susceptibleDiscreteShifted{\timeDiscrete},\infectedDiscrete{\timeDiscrete}]\right)\\
        &\quad + \rateSusceptible{\timeDiscrete} \cdot \left(\lyapunovFunction{\lyapunovParameter}[\susceptibleDiscreteShifted{\timeDiscrete}+1,\infectedDiscrete{\timeDiscrete}]- \lyapunovFunction{\lyapunovParameter}[\susceptibleDiscreteShifted{\timeDiscrete},\infectedDiscrete{\timeDiscrete}]\right).\qedhere
    \end{align*}
\end{lemma}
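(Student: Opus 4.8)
The plan is to expand the rescaled drift $\rateTotalOutside{\timeDiscrete}\cdot\driftSIRSClique{\lyapunovParameter}{\timeDiscrete}$ as a sum over the four possible transitions and then show that replacing the true infection rate $\rateInfection{\timeDiscrete}+\rateInfectionOutside{\timeDiscrete}$ by its guaranteed lower bound $\tfrac{\infectionConstant}{\degree}\edges{I}{S}$ only increases the drift once it is rescaled by $\rateTotal{\timeDiscrete}$ instead of by $\rateTotalOutside{\timeDiscrete}$. Throughout, write $\Delta_I$, $\Delta_R$, $\Delta_S$ for the three parenthesised potential differences on the right-hand side of the claim, that is, $\Delta_I=\lyapunovFunction{\lyapunovParameter}[\susceptibleDiscreteShifted{\timeDiscrete}-1,\infectedDiscrete{\timeDiscrete}+1]-\lyapunovFunction{\lyapunovParameter}[\susceptibleDiscreteShifted{\timeDiscrete},\infectedDiscrete{\timeDiscrete}]$, $\Delta_R=\lyapunovFunction{\lyapunovParameter}[\susceptibleDiscreteShifted{\timeDiscrete},\infectedDiscrete{\timeDiscrete}-1]-\lyapunovFunction{\lyapunovParameter}[\susceptibleDiscreteShifted{\timeDiscrete},\infectedDiscrete{\timeDiscrete}]$, and $\Delta_S=\lyapunovFunction{\lyapunovParameter}[\susceptibleDiscreteShifted{\timeDiscrete}+1,\infectedDiscrete{\timeDiscrete}]-\lyapunovFunction{\lyapunovParameter}[\susceptibleDiscreteShifted{\timeDiscrete},\infectedDiscrete{\timeDiscrete}]$.

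First I would note that the events $\eventInfection{\timeDiscrete}$ and $\eventInfectionOutside{\timeDiscrete}$ induce the same change $\Delta_I$ of the potential, since both move a vertex of $G'$ from susceptible to infected and $\lyapunovFunction{\lyapunovParameter}$ does not depend on the recovered count; hence, by the definitions of $\probabilityInfection{\timeDiscrete},\probabilityInfectionOutside{\timeDiscrete},\probabilityRecover{\timeDiscrete},\probabilitySusceptible{\timeDiscrete}$, we have $\rateTotalOutside{\timeDiscrete}\cdot\driftSIRSClique{\lyapunovParameter}{\timeDiscrete}=(\rateInfection{\timeDiscrete}+\rateInfectionOutside{\timeDiscrete})\Delta_I+\rateRecover{\timeDiscrete}\Delta_R+\rateSusceptible{\timeDiscrete}\Delta_S$, where $\rateTotalOutside{\timeDiscrete}>0$ because $\infectedDiscrete{\timeDiscrete}\ge2$ gives $\rateRecover{\timeDiscrete}=\infectedDiscrete{\timeDiscrete}\ge2$. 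Abbreviating $R=\rateInfection{\timeDiscrete}+\rateInfectionOutside{\timeDiscrete}$ and $a=\tfrac{\infectionConstant}{\degree}\edges{I}{S}$, substituting this identity into the claimed inequality, multiplying by $\rateTotalOutside{\timeDiscrete}$, and using $\rateTotal{\timeDiscrete}=a+\rateRecover{\timeDiscrete}+\rateSusceptible{\timeDiscrete}$ and $\rateTotalOutside{\timeDiscrete}=R+\rateRecover{\timeDiscrete}+\rateSusceptible{\timeDiscrete}$, most cross terms cancel and regrouping the rest shows the inequality is equivalent to $(a-R)\bigl(\rateRecover{\timeDiscrete}(\Delta_R-\Delta_I)+\rateSusceptible{\timeDiscrete}(\Delta_S-\Delta_I)\bigr)\le0$. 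Now $a-R\le0$ since $\infectionRate\ge\tfrac{\infectionConstant}{\degree}$ gives $\rateInfection{\timeDiscrete}=\infectionRate\edges{I}{S}\ge a$ and $\rateInfectionOutside{\timeDiscrete}\ge0$, while $\rateRecover{\timeDiscrete}=\infectedDiscrete{\timeDiscrete}\ge0$ and $\rateSusceptible{\timeDiscrete}=\deimmunizationRate\recoveredDiscrete{\timeDiscrete}\ge0$. Hence it suffices to prove, for a suitable constant $\varepsilon>0$ and whenever $2\le\infectedDiscrete{\timeDiscrete}\le\varepsilon\numberOfVertices$, the two pointwise inequalities $\Delta_I\le\Delta_R$ and $\Delta_I\le\Delta_S$.

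For these, recall that $\lyapunovHelper[x^*,\cdot]$ is strictly decreasing on $(0,x^*)$ with minimum at $x^*$. Since $\infectedDiscrete{\timeDiscrete}\ge2$ forces $\susceptibleDiscrete{\timeDiscrete}\le\numberOfVertices-2$, hence $\susceptibleDiscreteShifted{\timeDiscrete}+1\le\numberOfVerticesShifted-1<\numberOfVerticesShifted$, and since $\infectedEquilibrium=\bigTheta{\numberOfVertices}$ (using $\infectionConstant>1$) so that $\infectedDiscrete{\timeDiscrete}\le\varepsilon\numberOfVertices<\infectedEquilibrium$ once $\varepsilon$ is chosen small, the split $\lyapunovFunction{\lyapunovParameter}[\susceptibleDiscreteShifted{\timeDiscrete},\infectedDiscrete{\timeDiscrete}]=\lyapunovHelper[\numberOfVerticesShifted,\susceptibleDiscreteShifted{\timeDiscrete}]+\lyapunovHelper[\infectedEquilibrium,\infectedDiscrete{\timeDiscrete}]$ immediately yields $\Delta_S<0<\Delta_R$; it therefore remains to show $\Delta_I<\Delta_S$, as then $\Delta_I<\Delta_S<0<\Delta_R$ gives both inequalities at once. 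I would write $\Delta_I=\bigl(\lyapunovHelper[\numberOfVerticesShifted,\susceptibleDiscreteShifted{\timeDiscrete}-1]-\lyapunovHelper[\numberOfVerticesShifted,\susceptibleDiscreteShifted{\timeDiscrete}]\bigr)+\bigl(\lyapunovHelper[\infectedEquilibrium,\infectedDiscrete{\timeDiscrete}+1]-\lyapunovHelper[\infectedEquilibrium,\infectedDiscrete{\timeDiscrete}]\bigr)$ and bound the two summands separately. Because $\susceptibleDiscreteShifted{\timeDiscrete}\ge\tfrac{\deimmunizationRate}{\infectionConstant}\numberOfVertices$ and $\numberOfVerticesShifted=(1+\tfrac{\deimmunizationRate}{\infectionConstant})\numberOfVertices$, the ratio $\numberOfVerticesShifted/\susceptibleDiscreteShifted{\timeDiscrete}$ is at most the constant $1+\infectionConstant/\deimmunizationRate$, so \Cref{lem:lyapunovHelper} together with the logarithm estimates used to prove it bounds both $\lyapunovHelper[\numberOfVerticesShifted,\susceptibleDiscreteShifted{\timeDiscrete}-1]-\lyapunovHelper[\numberOfVerticesShifted,\susceptibleDiscreteShifted{\timeDiscrete}]$ and $\Delta_S$ in absolute value by a constant depending only on $\infectionConstant$ and $\deimmunizationRate$; this is exactly the role of the shift defining $\susceptibleDiscreteShifted{\timeDiscrete}$. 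For the second summand, the definition of $\lyapunovHelper$ gives $\lyapunovHelper[\infectedEquilibrium,\infectedDiscrete{\timeDiscrete}+1]-\lyapunovHelper[\infectedEquilibrium,\infectedDiscrete{\timeDiscrete}]=1-\infectedEquilibrium\ln(1+1/\infectedDiscrete{\timeDiscrete})\le1-\infectedEquilibrium/(\infectedDiscrete{\timeDiscrete}+1)\le1-\infectedEquilibrium/(\varepsilon\numberOfVertices+1)$, which is $1-\bigOmega{1/\varepsilon}$ for all sufficiently large $\numberOfVertices$. Choosing $\varepsilon$ smaller than a threshold determined by $\infectionConstant$ and $\deimmunizationRate$ then makes $\Delta_I$ strictly below the constant lower bound on $\Delta_S$, which proves $\Delta_I<\Delta_S$ and completes the proof.

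I expect the main obstacle to be the second part: one must recognise that the one-step comparisons $\Delta_I\le\Delta_R$ and $\Delta_I\le\Delta_S$ are the right reduction, and then exploit the key asymmetry — the shifted susceptible coordinate contributes only an $\bigO{1}$ amount to $\Delta_I$ (the whole point of shifting $\susceptibleDiscrete{\timeDiscrete}$ by $\tfrac{\deimmunizationRate}{\infectionConstant}\numberOfVertices$), whereas a single infection step toward $\infectedEquilibrium$ lowers the infected part of the potential by $\bigOmega{\infectedEquilibrium/\infectedDiscrete{\timeDiscrete}}=\bigOmega{1/\varepsilon}$, so a small enough constant $\varepsilon$ tips the balance. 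The algebra of the first part is routine, but one has to keep the nonpositive factor $a-R$ in control of the sign throughout.
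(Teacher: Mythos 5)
Your proof is correct and takes essentially the same route as the paper: both factor the nonpositive excess infection rate $a - R = -(\rateInfectionOutside{\timeDiscrete}+\rateExtra{\timeDiscrete})$ out of the rescaled drift and reduce to the identical residual inequality $\rateRecover{\timeDiscrete}(\driftInfection{\timeDiscrete}-\driftRecover{\timeDiscrete})+\rateSusceptible{\timeDiscrete}(\driftInfection{\timeDiscrete}-\driftSusceptible{\timeDiscrete})\le 0$, where $\driftInfection{\timeDiscrete},\driftRecover{\timeDiscrete},\driftSusceptible{\timeDiscrete}$ are the three potential increments. The only variation is the closing step: you establish the slightly stronger pointwise ordering $\driftInfection{\timeDiscrete}<\driftSusceptible{\timeDiscrete}<0<\driftRecover{\timeDiscrete}$ via the $\bigO{1}$-vs-$\bigOmega{1/\varepsilon}$ dichotomy, whereas the paper bounds $\driftInfection{\timeDiscrete}$, $-\driftRecover{\timeDiscrete}$, $-\driftSusceptible{\timeDiscrete}$ separately using \Cref{lem:lyapunovHelper} and shows the resulting affine combination is nonpositive for small $\varepsilon$.
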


\begin{proof}
Let $\timeDiscrete \in \N$. For easier notation, we first define
\begin{align*}
    \driftInfection{\timeDiscrete} &= \lyapunovFunction{\lyapunovParameter}[\susceptibleDiscreteShifted{\timeDiscrete}-1,\infectedDiscrete{\timeDiscrete}+1]- \lyapunovFunction{\lyapunovParameter}[\susceptibleDiscreteShifted{\timeDiscrete},\infectedDiscrete{\timeDiscrete}],\\
    \driftRecover{\timeDiscrete} &= \lyapunovFunction{\lyapunovParameter}[\susceptibleDiscreteShifted{\timeDiscrete},\infectedDiscrete{\timeDiscrete}-1]- \lyapunovFunction{\lyapunovParameter}[\susceptibleDiscreteShifted{\timeDiscrete},\infectedDiscrete{\timeDiscrete}]\\
    \text{and } \driftSusceptible{\timeDiscrete} &= \lyapunovFunction{\lyapunovParameter}[\susceptibleDiscreteShifted{\timeDiscrete}+1,\infectedDiscrete{\timeDiscrete}]- \lyapunovFunction{\lyapunovParameter}[\susceptibleDiscreteShifted{\timeDiscrete},\infectedDiscrete{\timeDiscrete}].
\end{align*}

We know that $\rateInfection{\timeDiscrete}= \infectionRate \edges{I}{S} = \frac{\infectionConstant}{\degree} \edges{I}{S} + \rateExtra{\timeDiscrete}$ for some $\rateExtra{\timeDiscrete} \in \R_{\geq 0}$.
By the definition of $\driftSIRSClique{\lyapunovParameter}{\timeDiscrete}$ and the fact that $\rateTotalOutside{\timeDiscrete} = \rateTotal{\timeDiscrete} + \rateInfectionOutside{\timeDiscrete} + \rateExtra{\timeDiscrete}$, we get that
\begin{align*}
    \rateTotal{\timeDiscrete} \cdot \driftSIRSClique{\lyapunovParameter}{\timeDiscrete} &= \rateTotal{\timeDiscrete} \cdot \frac{\rateInfectionOutside{\timeDiscrete} \driftInfection{\timeDiscrete} + \rateInfection{\timeDiscrete} \driftInfection{\timeDiscrete} + \rateRecover{\timeDiscrete} \driftRecover{\timeDiscrete} + \rateSusceptible{\timeDiscrete} \driftSusceptible{\timeDiscrete}}{\rateTotalOutside{\timeDiscrete}}\\
    &= \rateTotal{\timeDiscrete} \cdot \frac{\rateInfectionOutside{\timeDiscrete} \driftInfection{\timeDiscrete} + \rateExtra{\timeDiscrete} \driftInfection{\timeDiscrete} + \frac{\infectionConstant}{\degree} \edges{I}{S} \driftInfection{\timeDiscrete} + \rateRecover{\timeDiscrete} \driftRecover{\timeDiscrete} + \rateSusceptible{\timeDiscrete} \driftSusceptible{\timeDiscrete}}{\rateTotalOutside{\timeDiscrete}}\\
    &= \frac{\infectionConstant}{\degree} \edges{I}{S} \driftInfection{\timeDiscrete} + \rateRecover{\timeDiscrete} \driftRecover{\timeDiscrete} + \rateSusceptible{\timeDiscrete} \driftSusceptible{\timeDiscrete}\\
    &\quad + \frac{\rateInfectionOutside{\timeDiscrete} + \rateExtra{\timeDiscrete}}{\rateTotalOutside{\timeDiscrete}}\left(\rateTotal{\timeDiscrete} \driftInfection{\timeDiscrete} - \frac{\infectionConstant}{\degree} \edges{I}{S} \driftInfection{\timeDiscrete} - \rateRecover{\timeDiscrete} \driftRecover{\timeDiscrete} - \rateSusceptible{\timeDiscrete} \driftSusceptible{\timeDiscrete}\right)
\end{align*}

As $\frac{\rateInfectionOutside{\timeDiscrete}+\rateExtra{\timeDiscrete}}{\rateTotalOutside{\timeDiscrete}}$ is non-negative, to prove the lemma it is sufficient to show that
\begin{align*}
    \rateTotal{\timeDiscrete} \driftInfection{\timeDiscrete} - \frac{\infectionConstant}{\degree} \edges{I}{S} \driftInfection{\timeDiscrete} - \rateRecover{\timeDiscrete} \driftRecover{\timeDiscrete} - \rateSusceptible{\timeDiscrete} \driftSusceptible{\timeDiscrete} \leq 0.
\end{align*}

By \Cref{lem:lyapunovHelper}, we know that for all $x^* \in \R_{>0}$ and $x \in \R_{>2}$ holds
\begin{align*}
    1-\frac{x^*}{x} \leq \lyapunovHelper[x^*,x+1] - \lyapunovHelper[x^*,x] \leq 1 - \frac{x^*}{x+1}.
\end{align*}

Using these bounds, we get that
\begin{align*}
    -\driftRecover{\timeDiscrete} &=  -\left(\lyapunovFunction{\lyapunovParameter}[\susceptibleDiscreteShifted{\timeDiscrete},\infectedDiscrete{\timeDiscrete}-1]- \lyapunovFunction{\lyapunovParameter}[\susceptibleDiscreteShifted{\timeDiscrete},\infectedDiscrete{\timeDiscrete}]\right)\\
    &= - \left(\lyapunovHelper[\infectedEquilibrium,\infectedDiscrete{\timeDiscrete}-1] - \lyapunovHelper[\infectedEquilibrium,\infectedDiscrete{\timeDiscrete}]\right)\\
    &\leq 1-\frac{\infectedEquilibrium}{\infectedDiscrete{\timeDiscrete}} \textrm{, that}\\
    - \driftSusceptible{\timeDiscrete} &= - \left(\lyapunovFunction{\lyapunovParameter}[\susceptibleDiscreteShifted{\timeDiscrete}+1,\infectedDiscrete{\timeDiscrete}]- \lyapunovFunction{\lyapunovParameter}[\susceptibleDiscreteShifted{\timeDiscrete},\infectedDiscrete{\timeDiscrete}]\right)\\
    &= - \left(\lyapunovHelper[\numberOfVerticesShifted,\susceptibleDiscreteShifted{\timeDiscrete}+1] - \lyapunovHelper[\numberOfVerticesShifted,\susceptibleDiscreteShifted{\timeDiscrete}]\right)\\
    &\leq \frac{\numberOfVerticesShifted}{\susceptibleDiscreteShifted{\timeDiscrete}} -1 \textrm{, and that}\\
    \driftInfection{\timeDiscrete} &= \lyapunovFunction{\lyapunovParameter}[\susceptibleDiscreteShifted{\timeDiscrete}-1,\infectedDiscrete{\timeDiscrete}+1]- \lyapunovFunction{\lyapunovParameter}[\susceptibleDiscreteShifted{\timeDiscrete},\infectedDiscrete{\timeDiscrete}]\\
    &= \lyapunovHelper[\numberOfVerticesShifted,\susceptibleDiscreteShifted{\timeDiscrete}-1] - \lyapunovHelper[\numberOfVerticesShifted,\susceptibleDiscreteShifted{\timeDiscrete}] + \lyapunovHelper[\infectedEquilibrium,\infectedDiscrete{\timeDiscrete}+1] - \lyapunovHelper[\infectedEquilibrium,\infectedDiscrete{\timeDiscrete}]\\
    &\leq 1-\frac{\infectedEquilibrium}{\infectedDiscrete{\timeDiscrete}+1} - \left(1-\frac{\numberOfVerticesShifted}{\susceptibleDiscreteShifted{\timeDiscrete}-1}\right)\\
    &= \frac{\numberOfVerticesShifted}{\susceptibleDiscreteShifted{\timeDiscrete}-1} - \frac{\infectedEquilibrium}{\infectedDiscrete{\timeDiscrete}+1}.
\end{align*}

Note that $\numberOfVerticesShifted$ is in $\bigTheta{\numberOfVertices}$ and $\susceptibleDiscreteShifted{\timeDiscrete}$ is bounded from below by $\frac{\deimmunizationRate}{\infectionConstant}\numberOfVertices$, therefore $\frac{\numberOfVerticesShifted}{\susceptibleDiscreteShifted{\timeDiscrete}-1}$ is bounded from above by a constant~$a$. Assume that $\infectedDiscrete{\timeDiscrete} +1 \leq \varepsilon \numberOfVertices$. Let $b=\frac{\deimmunizationRate (1-\infectionConstant)}{(1+\deimmunizationRate)\infectionConstant}$. Note that $b>0$ is constant and $\infectedEquilibrium = b \numberOfVertices$. Using the bounds from above we get
\begin{align*}
    \rateTotal{\timeDiscrete} \driftInfection{\timeDiscrete} - \frac{\infectionConstant}{\degree} \edges{I}{S} \driftInfection{\timeDiscrete} - \rateRecover{\timeDiscrete} \driftRecover{\timeDiscrete} - \rateSusceptible{\timeDiscrete} \driftSusceptible{\timeDiscrete} &= (\rateRecover{\timeDiscrete}+\rateSusceptible{\timeDiscrete}) \driftInfection{\timeDiscrete} - \rateRecover{\timeDiscrete} \driftRecover{\timeDiscrete} - \rateSusceptible{\timeDiscrete} \driftSusceptible{\timeDiscrete}\\
    &\leq (\rateRecover{\timeDiscrete}+\rateSusceptible{\timeDiscrete}) \left(a- \frac{b\numberOfVertices}{\varepsilon \numberOfVertices}\right) + \rateRecover{\timeDiscrete}\left(1-\frac{b \numberOfVertices}{\varepsilon \numberOfVertices}\right) +  \rateSusceptible{\timeDiscrete}(a-1)\\
    &\leq  (\rateRecover{\timeDiscrete}+\rateSusceptible{\timeDiscrete}) \left(2a- \frac{b}{\varepsilon}\right).
\end{align*}

We know that $(\rateRecover{\timeDiscrete}+\rateSusceptible{\timeDiscrete})$ is non-negative, therefore we can choose $\varepsilon$ small enough such that the right-hand side of the previous equation is always at most 0, which concludes the proof.
\end{proof}

We now show that the drift \driftSIRSClique{\lyapunovParameter}{\timeDiscrete} is bounded from above by a negative constant for configurations in which the number of infected vertices is very small but still linear in $\numberOfVertices$.

\begin{lemma}\label{lem:constantDriftSIRS}
    Let $G$ be a graph, and let $G'$ be a subgraph of $G$ that is an $(\numberOfVertices,(1 \pm \degreeGap)\degree,\spectralGap)$-expander. Let $\degree\rightarrow\infty$ and $\spectralGap,\degreeGap\rightarrow0$ as $\numberOfVertices\rightarrow\infty$. Let \contactProcess be a SIRS process on $G$ with infection rate~$\infectionRate$ and with constant deimmunization rate \deimmunizationRate. Consider the projection \contactProcessProjection of \contactProcess onto~$G'$. Let $\timeDiscrete \in \N$ and $\varepsilon_0, \varepsilon \in (0,1)$ be sufficiently small constants. Assume that $\varepsilon_0 \numberOfVertices \geq \infectedDiscrete{\timeDiscrete} \geq \varepsilon \numberOfVertices$.
    If $\infectionRate \geq \frac{\infectionConstant}{\degree}$ for a constant $\infectionConstant \in \R_{>1}$, then there exists a constant $a \in \R_{>0}$ such that $\driftSIRSClique{\lyapunovParameter}{\timeDiscrete} \leq -a$ for sufficiently large \numberOfVertices.
\end{lemma}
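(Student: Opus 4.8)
The plan is to first reduce, via \Cref{lem:increasedInfection}, to a ``clique'' drift that ignores the infections coming from $G\setminus G'$, and then to reduce that to a one-parameter mean-field estimate. Since $\infectedDiscrete{\timeDiscrete}\geq\varepsilon\numberOfVertices\geq 2$ for large $\numberOfVertices$, choosing $\varepsilon_0$ below the constant supplied by \Cref{lem:increasedInfection} makes that lemma applicable, so \[ \rateTotal{\timeDiscrete}\cdot\driftSIRSClique{\lyapunovParameter}{\timeDiscrete}\;\leq\;\tfrac{\infectionConstant}{\degree}\edges{I}{S}\cdot\driftInfection{\timeDiscrete}+\rateRecover{\timeDiscrete}\cdot\driftRecover{\timeDiscrete}+\rateSusceptible{\timeDiscrete}\cdot\driftSusceptible{\timeDiscrete}, \] with $\rateTotal{\timeDiscrete}=\tfrac{\infectionConstant}{\degree}\edges{I}{S}+\rateRecover{\timeDiscrete}+\rateSusceptible{\timeDiscrete}$, and $\driftInfection{\timeDiscrete},\driftRecover{\timeDiscrete},\driftSusceptible{\timeDiscrete}$ the single-event potential differences from the proof of \Cref{lem:increasedInfection}. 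Because $\rateRecover{\timeDiscrete}=\infectedDiscrete{\timeDiscrete}\geq\varepsilon\numberOfVertices$ and $\rateTotal{\timeDiscrete}\leq(1+\degreeGap)\infectionConstant\numberOfVertices+\numberOfVertices+\deimmunizationRate\numberOfVertices$, we have $\rateTotal{\timeDiscrete}=\bigTheta{\numberOfVertices}$, so it suffices to bound the right-hand side by $-a'\numberOfVertices$ for a constant $a'>0$; then $\driftSIRSClique{\lyapunovParameter}{\timeDiscrete}\leq-a$ with $a\defeq a'/\!\bigl((1+\degreeGap)\infectionConstant+1+\deimmunizationRate\bigr)$.

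Next I would plug in \Cref{lem:lyapunovHelper}. Up to additive $\bigO{1}$ this gives $\rateRecover{\timeDiscrete}\driftRecover{\timeDiscrete}\leq\infectedEquilibrium-\infectedDiscrete{\timeDiscrete}$, $\rateSusceptible{\timeDiscrete}\driftSusceptible{\timeDiscrete}\leq-\rateSusceptible{\timeDiscrete}\,\tfrac{\numberOfVerticesShifted-\susceptibleDiscreteShifted{\timeDiscrete}}{\susceptibleDiscreteShifted{\timeDiscrete}}$, and $\driftInfection{\timeDiscrete}\leq\tfrac{\numberOfVerticesShifted}{\susceptibleDiscreteShifted{\timeDiscrete}}-\tfrac{\infectedEquilibrium}{\infectedDiscrete{\timeDiscrete}}+\smallO{1}$; since $\susceptibleDiscreteShifted{\timeDiscrete}\geq\tfrac{\deimmunizationRate}{\infectionConstant}\numberOfVertices$ keeps $\numberOfVerticesShifted/\susceptibleDiscreteShifted{\timeDiscrete}$ bounded while $\infectedEquilibrium/\infectedDiscrete{\timeDiscrete}\geq(\infectedEquilibrium/\numberOfVertices)/\varepsilon_0$, for $\varepsilon_0$ small $\driftInfection{\timeDiscrete}$ is a negative constant. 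For $\edges{I}{S}$ I would distinguish two cases using \Cref{pre:expanderbroad}. If $\susceptibleDiscrete{\timeDiscrete}\geq\sigma_0\numberOfVertices$ for a small constant $\sigma_0$, then $\infectedDiscrete{\timeDiscrete}\susceptibleDiscrete{\timeDiscrete}/\numberOfVertices\geq\varepsilon\sigma_0\numberOfVertices$ dominates the additive $2\spectralGap\degree\sqrt{\infectedDiscrete{\timeDiscrete}\susceptibleDiscrete{\timeDiscrete}}\leq 2\spectralGap\degree\numberOfVertices$, so $\edges{I}{S}\geq(1-\smallO{1})\degree\,\infectedDiscrete{\timeDiscrete}\susceptibleDiscrete{\timeDiscrete}/\numberOfVertices$, and as $\driftInfection{\timeDiscrete}<0$ this yields the upper bound $\tfrac{\infectionConstant}{\degree}\edges{I}{S}\driftInfection{\timeDiscrete}\leq(1-\smallO{1})\infectionConstant\,\tfrac{\infectedDiscrete{\timeDiscrete}\susceptibleDiscrete{\timeDiscrete}}{\numberOfVertices}\,\driftInfection{\timeDiscrete}$. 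If $\susceptibleDiscrete{\timeDiscrete}<\sigma_0\numberOfVertices$, I would simply use $\tfrac{\infectionConstant}{\degree}\edges{I}{S}\driftInfection{\timeDiscrete}\leq 0$.

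Writing $\sigma=\susceptibleDiscrete{\timeDiscrete}/\numberOfVertices$, $\iota=\infectedDiscrete{\timeDiscrete}/\numberOfVertices\in[\varepsilon,\varepsilon_0]$, $\varrho=\recoveredDiscrete{\timeDiscrete}/\numberOfVertices=1-\sigma-\iota$, $\theta=\deimmunizationRate/\infectionConstant$ and $i^*=\infectedEquilibrium/\numberOfVertices$, the task becomes bounding \[ i^*-\iota\;-\;\infectionConstant\iota\sigma\Bigl(\tfrac{i^*}{\iota}-\tfrac{1+\theta}{\sigma+\theta}\Bigr)\indicator{\sigma\geq\sigma_0}\;-\;\deimmunizationRate(1-\sigma-\iota)\,\tfrac{1-\sigma}{\sigma+\theta}\;+\;\smallO{1} \] by $-a'$ uniformly on the compact region $\{\varepsilon\leq\iota\leq\varepsilon_0,\ 0\leq\sigma\leq 1-\iota\}$. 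For $\sigma<\sigma_0$ the deimmunization term alone handles it: $\deimmunizationRate(1-\sigma-\iota)\tfrac{1-\sigma}{\sigma+\theta}$ is decreasing in $\sigma$, with value tending to $\deimmunizationRate/\theta=\infectionConstant$ as $\sigma_0,\varepsilon_0\to0$, and $\infectionConstant>i^*$. For $\sigma\geq\sigma_0$ the quantity $\iota\tfrac{1+\theta}{\sigma+\theta}\leq\varepsilon_0\tfrac{1+\theta}{\sigma_0+\theta}$ is arbitrarily small, so the infection term is at most $-\infectionConstant\sigma\bigl(i^*-\smallO{1}\bigr)$, and it remains to verify $\infectionConstant\sigma i^*+\deimmunizationRate\tfrac{(1-\sigma)^2}{\sigma+\theta}>i^*$ for $\sigma\in(0,1)$. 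Substituting $u=\infectionConstant\sigma$ and $i^*=\tfrac{\deimmunizationRate(\infectionConstant-1)}{(1+\deimmunizationRate)\infectionConstant}$, this is exactly $(1+\deimmunizationRate)(\infectionConstant-u)^2>(\infectionConstant-1)(1-u)(u+\deimmunizationRate)$, which is trivial for $u\geq1$ and for $u\in[0,1]$ follows from $(\infectionConstant-u)^2\geq(\infectionConstant-1)^2+2(\infectionConstant-1)(1-u)$ together with $u+\deimmunizationRate\leq1+\deimmunizationRate$; it even holds with a factor-two slack. Compactness upgrades these strict inequalities to a uniform gap $a'>0$ once $\sigma_0,\varepsilon_0$ and then $\numberOfVertices$ are chosen suitably.

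The main obstacle is exactly this last, mean-field step: producing a single negative constant uniformly over all admissible $(\sigma,\varrho)$ when only $\infectedDiscrete{\timeDiscrete}$ is constrained. The delicate range is intermediate $\sigma$, where neither mechanism dominates on its own -- the downward push from new infections is weak when $\sigma$ is small (there $\edges{I}{S}$ is only $\smallO{\numberOfVertices}$, which is precisely why the case split is needed), while the deimmunization term is weak when $\sigma$ is near $1$ -- so the two must be balanced through the displayed polynomial inequality; this is where the hypothesis $\infectionConstant>1$ and the particular shift $\susceptibleDiscreteShifted{\timeDiscrete}=\susceptibleDiscrete{\timeDiscrete}+\tfrac{\deimmunizationRate}{\infectionConstant}\numberOfVertices$ in the potential (which keeps $\numberOfVerticesShifted/\susceptibleDiscreteShifted{\timeDiscrete}$ bounded, and bounded away from $1$) enter. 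A secondary, purely bookkeeping hurdle is the order of constants: fix $\sigma_0$ first (depending only on $\infectionConstant,\deimmunizationRate$), then $\varepsilon_0$ small relative to $\sigma_0$, to $i^*$, and to the threshold of \Cref{lem:increasedInfection}, and finally $\varepsilon\leq\varepsilon_0$.
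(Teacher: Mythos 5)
Your proof is correct, and at its core it is the same mean‑field strategy as the paper's: apply \Cref{lem:increasedInfection} to drop the external infection rate, use \Cref{lem:lyapunovHelper} to reduce the three one‑step potential differences to $\bigO{1}$‑accurate rational expressions, replace $\edges{I}{S}$ by $\degree\infectedDiscrete{\timeDiscrete}\susceptibleDiscrete{\timeDiscrete}/\numberOfVertices$ via \Cref{pre:expanderbroad}, and then verify a purely algebraic mean‑field inequality. The differences are in the last two steps. The paper does not split on $\susceptibleDiscrete{\timeDiscrete}$: it uses the two‑sided expander bound uniformly, collects all error terms into a single $\bigO{(\spectralGap+\degreeGap)\numberOfVertices}$ remainder whose prefactor $\frac{\infectedEquilibrium}{\infectedDiscrete{\timeDiscrete}}+\frac{\numberOfVerticesShifted}{\susceptibleDiscreteShifted{\timeDiscrete}}$ stays $\bigO{1}$ because of $\susceptibleDiscreteShifted{\timeDiscrete}\geq\frac{\deimmunizationRate}{\infectionConstant}\numberOfVertices$ and $\infectedDiscrete{\timeDiscrete}\geq\varepsilon\numberOfVertices$, and then eliminates the $\susceptibleDiscreteShifted{\timeDiscrete}$‑dependence in one step by maximizing $-\frac{\infectionConstant+\deimmunizationRate}{1+\deimmunizationRate}\susceptibleDiscreteShifted{\timeDiscrete}-\frac{\numberOfVerticesShifted^2}{\susceptibleDiscreteShifted{\timeDiscrete}}$, reducing the whole thing to the scalar inequality $1+\deimmunizationRate<4(\infectionConstant+\deimmunizationRate)$. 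You instead introduce a case split at $\sigma_0$ to control the $\sqrt{\infectedDiscrete{\timeDiscrete}\susceptibleDiscrete{\timeDiscrete}}$ error relative to the main term, keep the $\sigma$‑dependence, and prove the two‑variable inequality $(1+\deimmunizationRate)(\infectionConstant-u)^2>(\infectionConstant-1)(1-u)(u+\deimmunizationRate)$ by completing the square, invoking compactness for the uniform gap. Both are sound; the paper's route avoids the case split (because the error term is bounded uniformly, not relatively) and avoids compactness by producing an explicit uniform bound, which makes the bookkeeping of constants a bit tighter, while your route makes the role of the deimmunization term at small $\sigma$ and the infection term at moderate $\sigma$ more visible. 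One small caveat worth spelling out in a writeup: when you upper‑bound $\frac{\infectionConstant}{\degree}\edges{I}{S}\cdot\driftInfection{\timeDiscrete}$ by a lower bound on $\edges{I}{S}$ times an upper bound on $\driftInfection{\timeDiscrete}$, you need both that $\driftInfection{\timeDiscrete}\leq 0$ (not merely that its upper bound is $\leq 0$) and that the lower bound on $\edges{I}{S}$ is nonnegative; both hold here since $\infectedDiscrete{\timeDiscrete}+1\leq\infectedEquilibrium$ for $\varepsilon_0$ small and \Cref{lem:lyapunovHelper} gives a genuinely negative value, and the expander lower bound is $(1-\smallO{1})\degree\infectedDiscrete{\timeDiscrete}\susceptibleDiscrete{\timeDiscrete}/\numberOfVertices\geq 0$, but this should be stated.
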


\begin{proof}
Let $\edges{I}{S}$ be the amount of edges between the infected and the susceptible vertices at time $\timeDiscrete$, and let $\rateTotal{\timeDiscrete} = \frac{\infectionConstant}{\degree}\edges{I}{S} + \rateRecover{\timeDiscrete} + \rateSusceptible{\timeDiscrete}$.
For this proof, we first use the law of total expectation and \Cref{lem:lyapunovHelper} to get a large formula as an upper bound for $\rateTotal{\timeDiscrete} \driftSIRSClique{\lyapunovParameter}{\timeDiscrete}$. We split this bound into multiple parts and bound each part separately. We show that, with the given conditions, one of the parts is bounded from above by $-m \numberOfVertices$ for a constant $m \in \R_{>0}$, and the other part is in $\smallO{n}$, so it is asymptotically much smaller in absolute values than the other part. We conclude the proof by bounding $\rateTotal{\timeDiscrete}$ and dividing the obtained bound for $\rateTotal{\timeDiscrete} \driftSIRSClique{\lyapunovParameter}{\timeDiscrete}$ by it.

Using \Cref{lem:increasedInfection} and \Cref{lem:lyapunovHelper}, we get
\begin{align*}
    \rateTotal{\timeDiscrete} \cdot \driftSIRSClique{\lyapunovParameter}{\timeDiscrete} &\leq\frac{\infectionConstant}{\degree}\edges{I}{S} \cdot \left(\lyapunovFunction{\lyapunovParameter}[\susceptibleDiscreteShifted{\timeDiscrete}-1,\infectedDiscrete{\timeDiscrete}+1]- \lyapunovFunction{\lyapunovParameter}[\susceptibleDiscreteShifted{\timeDiscrete},\infectedDiscrete{\timeDiscrete}]\right)\\
    &\quad + \rateRecover{\timeDiscrete} \cdot \left(\lyapunovFunction{\lyapunovParameter}[\susceptibleDiscreteShifted{\timeDiscrete},\infectedDiscrete{\timeDiscrete}-1]- \lyapunovFunction{\lyapunovParameter}[\susceptibleDiscreteShifted{\timeDiscrete},\infectedDiscrete{\timeDiscrete}]\right)\\
    &\quad + \rateSusceptible{\timeDiscrete} \cdot \left(\lyapunovFunction{\lyapunovParameter}[\susceptibleDiscreteShifted{\timeDiscrete}+1,\infectedDiscrete{\timeDiscrete}]- \lyapunovFunction{\lyapunovParameter}[\susceptibleDiscreteShifted{\timeDiscrete},\infectedDiscrete{\timeDiscrete}]\right)\\
    &\leq \frac{\infectionConstant}{\degree}\edges{I}{S} \cdot \left( \left(1 - \frac{\infectedEquilibrium}{\infectedDiscrete{\timeDiscrete}} + \frac{\infectedEquilibrium}{\infectedDiscrete{\timeDiscrete}(\infectedDiscrete{\timeDiscrete}+1)}\right) - \left(1 - \frac{\numberOfVerticesShifted}{\susceptibleDiscreteShifted{\timeDiscrete}} - \frac{\numberOfVerticesShifted}{\susceptibleDiscreteShifted{\timeDiscrete}(\susceptibleDiscreteShifted{\timeDiscrete}-1)}\right)\right)\\
    &\quad + \rateRecover{\timeDiscrete} \cdot \left(- \left(1 - \frac{\infectedEquilibrium}{\infectedDiscrete{\timeDiscrete}} - \frac{\infectedEquilibrium}{\infectedDiscrete{\timeDiscrete}(\infectedDiscrete{\timeDiscrete}-1)}\right)\right)\\
    &\quad + \rateSusceptible{\timeDiscrete} \cdot \left(\lyapunovConstant \left(1 - \frac{\numberOfVerticesShifted}{\susceptibleDiscreteShifted{\timeDiscrete}} + \frac{\numberOfVerticesShifted}{\susceptibleDiscreteShifted{\timeDiscrete}(\susceptibleDiscreteShifted{\timeDiscrete}+1)}\right) \right)\\
    &=  \left(1-\frac{\infectedEquilibrium}{\infectedDiscrete{\timeDiscrete}}\right) \left(\frac{\infectionConstant}{\degree}\edges{I}{S} - \rateRecover{\timeDiscrete}\right) + \left(1-\frac{\numberOfVerticesShifted}{\susceptibleDiscreteShifted{\timeDiscrete}}\right)\left(\rateSusceptible{\timeDiscrete} - \frac{\infectionConstant}{\degree}\edges{I}{S}\right)
    \\
    &\quad + \frac{ \frac{\infectionConstant}{\degree}\edges{I}{S} \infectedEquilibrium}{\infectedDiscrete{\timeDiscrete} (\infectedDiscrete{\timeDiscrete}+1)} + \frac{ \frac{\infectionConstant}{\degree}\edges{I}{S} \numberOfVerticesShifted}{\susceptibleDiscreteShifted{\timeDiscrete} (\susceptibleDiscreteShifted{\timeDiscrete}-1)} + \frac{ \rateRecover{\timeDiscrete} \infectedEquilibrium}{\infectedDiscrete{\timeDiscrete} (\infectedDiscrete{\timeDiscrete}-1)} + \frac{ \rateSusceptible{\timeDiscrete} \numberOfVerticesShifted}{\susceptibleDiscreteShifted{\timeDiscrete} (\susceptibleDiscreteShifted{\timeDiscrete}+1)}.
\end{align*}

Note that with the given conditions, all values of \susceptibleDiscreteShifted{\timeDiscrete}, \infectedDiscrete{\timeDiscrete}, \numberOfVerticesShifted, and \infectedEquilibrium are in $\bigTheta{\numberOfVertices}$. All of $\frac{\infectionConstant}{\degree}\edges{I}{S}$, \rateRecover{\timeDiscrete}, and \rateSusceptible{\timeDiscrete} are in $\bigO{\numberOfVertices}$. Therefore, the terms in the second row of the last sum are in $\bigO{1}$, thus we only need to bound the first part.

We know the exact values of $\rateSusceptible{\timeDiscrete}$ and $\rateRecover{\timeDiscrete}$. However, the value of $\frac{\infectionConstant}{\degree}\edges{I}{S}$ depends on which vertices are infected. We use the expander properties of $G$ and  \Cref{pre:expanderbroad} to bound this number. Note that both $\left(1-\frac{\infectedEquilibrium}{\infectedDiscrete{\timeDiscrete}}\right)$ and $\left(1-\frac{\numberOfVerticesShifted}{\susceptibleDiscreteShifted{\timeDiscrete}}\right)$ are negative and lower bounded by some constant. We get for sufficiently large $\numberOfVertices$ that
\begin{align*}
    &\left(1-\frac{\infectedEquilibrium}{\infectedDiscrete{\timeDiscrete}}\right) \left(\frac{\infectionConstant}{\degree}\edges{I}{S} - \rateRecover{\timeDiscrete}\right) + \left(1-\frac{\numberOfVerticesShifted}{\susceptibleDiscreteShifted{\timeDiscrete}}\right)\left(\rateSusceptible{\timeDiscrete} - \frac{\infectionConstant}{\degree}\edges{I}{S}\right)
    \\
    &\leq\left(1-\frac{\infectedEquilibrium}{\infectedDiscrete{\timeDiscrete}}\right) \left(\frac{\infectionConstant}{\numberOfVertices}\infectedDiscrete{\timeDiscrete}\susceptibleDiscrete{\timeDiscrete} - \rateRecover{\timeDiscrete} - 4\degreeGap \frac{\infectionConstant}{\numberOfVertices}\infectedDiscrete{\timeDiscrete}\susceptibleDiscrete{\timeDiscrete} - 2 \infectionConstant \spectralGap \sqrt{\infectedDiscrete{\timeDiscrete}\susceptibleDiscrete{\timeDiscrete}}\right)\\
    &\quad+ \left(1-\frac{\numberOfVerticesShifted}{\susceptibleDiscreteShifted{\timeDiscrete}}\right)\left(\rateSusceptible{\timeDiscrete} - \frac{\infectionConstant}{\numberOfVertices}\infectedDiscrete{\timeDiscrete}\susceptibleDiscrete{\timeDiscrete} - 4\degreeGap \frac{\infectionConstant}{\numberOfVertices}\infectedDiscrete{\timeDiscrete}\susceptibleDiscrete{\timeDiscrete} -2 \infectionConstant \spectralGap \sqrt{\infectedDiscrete{\timeDiscrete}\susceptibleDiscrete{\timeDiscrete}}\right)
    \\
    &\leq\left(1-\frac{\infectedEquilibrium}{\infectedDiscrete{\timeDiscrete}}\right) \left(\frac{\infectionConstant}{\numberOfVertices}\infectedDiscrete{\timeDiscrete}\susceptibleDiscrete{\timeDiscrete} - \rateRecover{\timeDiscrete}\right)+ \left(1-\frac{\numberOfVerticesShifted}{\susceptibleDiscreteShifted{\timeDiscrete}}\right)\left(\rateSusceptible{\timeDiscrete} - \frac{\infectionConstant}{\numberOfVertices}\infectedDiscrete{\timeDiscrete}\susceptibleDiscrete{\timeDiscrete}\right)\\
    &\quad + \left(\frac{\infectedEquilibrium}{\infectedDiscrete{\timeDiscrete}} + \frac{\numberOfVerticesShifted}{\susceptibleDiscreteShifted{\timeDiscrete}}\right) \left(2\infectionConstant \spectralGap \sqrt{\infectedDiscrete{\timeDiscrete}\susceptibleDiscrete{\timeDiscrete}} + 4\degreeGap \frac{\infectionConstant}{\numberOfVertices}\infectedDiscrete{\timeDiscrete}\susceptibleDiscrete{\timeDiscrete}\right).
\end{align*}

Note that $\frac{\infectedEquilibrium}{\infectedDiscrete{\timeDiscrete}} + \frac{\numberOfVerticesShifted}{\susceptibleDiscreteShifted{\timeDiscrete}}$ is in $\bigTheta{1}$, hence the last part of the last sum is in $\bigO{(\spectralGap + \degreeGap) \numberOfVertices}$. As $\spectralGap + \degreeGap$ goes towards~$0$, this is asymptotically smaller then the rest of the drift, which we show now.
\begin{align*}
    & \left(1-\frac{\infectedEquilibrium}{\infectedDiscrete{\timeDiscrete}}\right) \left(\frac{\infectionConstant}{\numberOfVertices} \infectedDiscrete{\timeDiscrete} \susceptibleDiscrete{\timeDiscrete} - \rateRecover{\timeDiscrete}\right) + \left(1-\frac{\numberOfVerticesShifted}{\susceptibleDiscreteShifted{\timeDiscrete}}\right)\left(\rateSusceptible{\timeDiscrete} - \frac{\infectionConstant}{\numberOfVertices} \infectedDiscrete{\timeDiscrete} \susceptibleDiscrete{\timeDiscrete}\right)\\
    &=  \left(1-\frac{\infectedEquilibrium}{\infectedDiscrete{\timeDiscrete}}\right) \left(\frac{\infectionConstant}{\numberOfVertices} \infectedDiscrete{\timeDiscrete} \susceptibleDiscreteShifted{\timeDiscrete} - \deimmunizationRate \infectedDiscrete{\timeDiscrete} - \infectedDiscrete{\timeDiscrete}\right) + \left(1-\frac{\numberOfVerticesShifted}{\susceptibleDiscreteShifted{\timeDiscrete}}\right)\left(\deimmunizationRate \recoveredDiscrete{\timeDiscrete} - \frac{\infectionConstant}{\numberOfVertices} \infectedDiscrete{\timeDiscrete} \susceptibleDiscreteShifted{\timeDiscrete} + \deimmunizationRate \infectedDiscrete{\timeDiscrete}\right)\\
    &=  \left(1-\frac{\infectedEquilibrium}{\infectedDiscrete{\timeDiscrete}}\right) \left(\frac{\infectionConstant}{\numberOfVertices} \infectedDiscrete{\timeDiscrete} \susceptibleDiscreteShifted{\timeDiscrete} - (1+\deimmunizationRate) \infectedDiscrete{\timeDiscrete}\right) + \left(1-\frac{\numberOfVerticesShifted}{\susceptibleDiscreteShifted{\timeDiscrete}}\right)\left(\deimmunizationRate \numberOfVerticesShifted - \deimmunizationRate \susceptibleDiscreteShifted{\timeDiscrete} - \frac{\infectionConstant}{\numberOfVertices} \infectedDiscrete{\timeDiscrete} \susceptibleDiscreteShifted{\timeDiscrete}\right)\\
    &=  \frac{\infectionConstant}{\numberOfVertices} \infectedDiscrete{\timeDiscrete} \susceptibleDiscreteShifted{\timeDiscrete} - (1+\deimmunizationRate) \infectedDiscrete{\timeDiscrete} - \frac{\infectionConstant}{\numberOfVertices} \infectedEquilibrium \susceptibleDiscreteShifted{\timeDiscrete} + (1+\deimmunizationRate) \infectedEquilibrium + \deimmunizationRate \numberOfVerticesShifted- \deimmunizationRate \susceptibleDiscreteShifted{\timeDiscrete} - \frac{\infectionConstant}{\numberOfVertices} \infectedDiscrete{\timeDiscrete} \susceptibleDiscreteShifted{\timeDiscrete} -\deimmunizationRate \frac{\numberOfVerticesShifted{}^2}{\susceptibleDiscreteShifted{\timeDiscrete}} + \deimmunizationRate \numberOfVerticesShifted + \frac{\infectionConstant}{\numberOfVertices} \infectedDiscrete{\timeDiscrete} \numberOfVerticesShifted\\
    &=  (\infectionConstant-1)\infectedDiscrete{\timeDiscrete}- \frac{\deimmunizationRate(\infectionConstant-1)}{1+\deimmunizationRate} \susceptibleDiscreteShifted{\timeDiscrete} + \frac{\deimmunizationRate (\infectionConstant-1)}{\infectionConstant}\numberOfVertices +2 \deimmunizationRate \numberOfVerticesShifted - \deimmunizationRate \susceptibleDiscreteShifted{\timeDiscrete} -\deimmunizationRate \frac{\numberOfVerticesShifted{}^2}{\susceptibleDiscreteShifted{\timeDiscrete}}\\
    &=  \deimmunizationRate  \left(\frac{(\infectionConstant-1)\infectedDiscrete{\timeDiscrete}}{\deimmunizationRate} + \frac{ \infectionConstant-1}{\infectionConstant+\deimmunizationRate}\numberOfVerticesShifted +2  \numberOfVerticesShifted - \frac{\infectionConstant+\deimmunizationRate}{1+\deimmunizationRate} \susceptibleDiscreteShifted{\timeDiscrete} - \frac{\numberOfVerticesShifted{}^2}{\susceptibleDiscreteShifted{\timeDiscrete}}\right).
\end{align*}

We aim to bound this term from above. To this end, we bound $- \frac{(\infectionConstant+\deimmunizationRate)}{1+\deimmunizationRate} \susceptibleDiscreteShifted{\timeDiscrete} - \frac{\numberOfVerticesShifted{}^2}{\susceptibleDiscreteShifted{\timeDiscrete}}$ from above. This term has exactly one maximum for positive $\susceptibleDiscreteShifted{\timeDiscrete}$ which is at $\susceptibleDiscreteShifted{\timeDiscrete} = \numberOfVerticesShifted\sqrt{\frac{1+\deimmunizationRate}{\infectionConstant+\deimmunizationRate}}$. We also bound $\infectedDiscrete{\timeDiscrete} \leq \frac{\varepsilon_0 \infectionConstant}{\infectionConstant+\deimmunizationRate}\numberOfVerticesShifted$ from above. We get
\begin{align*}
    &\deimmunizationRate  \left(\frac{(\infectionConstant-1)\infectedDiscrete{\timeDiscrete}}{\deimmunizationRate} + \frac{ \infectionConstant-1}{\infectionConstant+\deimmunizationRate}\numberOfVerticesShifted +2  \numberOfVerticesShifted - \frac{\infectionConstant+\deimmunizationRate}{1+\deimmunizationRate} \susceptibleDiscreteShifted{\timeDiscrete} - \frac{\numberOfVerticesShifted{}^2}{\susceptibleDiscreteShifted{\timeDiscrete}}\right)\\
    &\leq  \deimmunizationRate \numberOfVerticesShifted \left(\frac{(\infectionConstant-1)\infectionConstant }{\deimmunizationRate (\infectionConstant + \deimmunizationRate)}\varepsilon_0 + \frac{ \infectionConstant-1}{\infectionConstant+\deimmunizationRate} +2  - 2\sqrt{\frac{\infectionConstant+\deimmunizationRate}{1+\deimmunizationRate}} \right).
\end{align*}

The expression in the brackets is a constant, and we aim to show that it is negative for sufficiently small $\varepsilon_0$. We achieve this by showing that the part without the $\varepsilon_0$ is negative and then choosing $\varepsilon_0$ small enough. As both $\deimmunizationRate$ and $\infectionConstant-1$ are positive, we get
\begin{align*}
    &\frac{ \infectionConstant-1}{\infectionConstant+\deimmunizationRate} +2  - 2\sqrt{\frac{\infectionConstant+\deimmunizationRate}{1+\deimmunizationRate}} < 0\\
    \Leftrightarrow\quad & \frac{ \infectionConstant-1}{\infectionConstant+\deimmunizationRate} +2 <2\sqrt{\frac{\infectionConstant+\deimmunizationRate}{1+\deimmunizationRate}}\\
    \Leftrightarrow\quad & \frac{ (\infectionConstant-1)^2}{(\infectionConstant+\deimmunizationRate)^2} +4\frac{ \infectionConstant-1}{\infectionConstant+\deimmunizationRate}+4 <4\frac{\infectionConstant+\deimmunizationRate}{1+\deimmunizationRate}\\
    \Leftrightarrow\quad & \frac{ (\infectionConstant-1)^2}{(\infectionConstant+\deimmunizationRate)^2} +4\frac{ \infectionConstant-1}{\infectionConstant+\deimmunizationRate} <4\frac{\infectionConstant-1}{1+\deimmunizationRate}\\
    \Leftrightarrow\quad & \frac{ (\infectionConstant-1)^2}{(\infectionConstant+\deimmunizationRate)^2} <4\frac{(\infectionConstant-1)^2}{(1+\deimmunizationRate)(\infectionConstant+\deimmunizationRate)}\\
    \Leftrightarrow\quad &1+ \deimmunizationRate < 4(\infectionConstant+\deimmunizationRate).
\end{align*}
The last line holds because $\infectionConstant >1$. Taking everything together, we get that $\rateTotal{\timeDiscrete} \cdot \driftSIRSClique{\lyapunovParameter}{\timeDiscrete}$ is bounded from above by the sum of a constant, by a term that is in $\bigTheta{(\spectralGap + \degreeGap) \numberOfVertices}$, by and $-b \deimmunizationRate \numberOfVerticesShifted$, where~$b$ is a positive constant for sufficiently small~$\varepsilon_0$.

We know that $\rateTotal{\timeDiscrete} = \frac{\infectionConstant}{\degree} \edges{I}{S} + \infectedDiscrete{\timeDiscrete} + \deimmunizationRate \recoveredDiscrete{\timeDiscrete} \leq \infectionConstant \numberOfVertices (1+\degreeGap) + \numberOfVertices + \deimmunizationRate \numberOfVertices = \left(\infectionConstant(1+\degreeGap) + 1 + \deimmunizationRate\right) \numberOfVertices$. As also $\rateTotal{\timeDiscrete} \geq \infectedDiscrete{\timeDiscrete} \geq \varepsilon \numberOfVertices>0$, by dividing the inequality for $\rateTotal{\timeDiscrete} \cdot \driftSIRSClique{\lyapunovParameter}{\timeDiscrete}$ by \rateTotal{\timeDiscrete}, we get that there exists a constant $a \in \R_{>0}$ such that $\driftSIRSClique{\lyapunovParameter}{\timeDiscrete} \leq -a$, concluding the proof.
\end{proof}

We aim to apply the negative-drift theorem (\Cref{pre:NegativeDrift}) to bound the expected survival time of the infection. In \Cref{lem:constantDriftSIRS}, we showed a constant negative drift of the potential in a region of the configuration space. To apply the drift theorem, we first transform the configuration space restrictions into restrictions on the value of the potential. The first lemma shows that if there is at least a constant amount of infected vertices, the potential does not get too large.

\begin{lemma}\label{lem:highEpsilonSIRS}
    Let $G$ be a graph, and let $G'$ be a subgraph of $G$ that is an $(\numberOfVertices,(1 \pm \degreeGap)\degree,\spectralGap)$-expander. Let $\degree\rightarrow\infty$ and $\spectralGap,\degreeGap\rightarrow0$ as $\numberOfVertices\rightarrow\infty$. Let \contactProcess be a SIRS process on $G$ with infection rate~$\infectionRate$ and with constant deimmunization rate \deimmunizationRate. Consider the projection \contactProcessProjection of \contactProcess onto $G'$. Let $\timeDiscrete \in \N$ and $\varepsilon \in (0,1)$ be constants such that $\infectedDiscrete{\timeDiscrete} \geq \varepsilon \numberOfVertices$.
    If $\infectionRate \geq \frac{\infectionConstant}{\degree}$ for a constant $\infectionConstant \in \R_{>1}$, then $\potentialSIRSClique{\lyapunovParameter}{\timeDiscrete} \in \bigO{\numberOfVertices}$.
\end{lemma}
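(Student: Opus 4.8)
The plan is to observe that the helper function factors as $\lyapunovHelper[x^*,x] = x^*\cdot g\!\left(x/x^*\right)$ with $g(u) = u - \ln u - 1$, and then to bound each of the two summands of $\potentialSIRSClique{\lyapunovParameter}{\timeDiscrete} = \lyapunovHelper[\numberOfVerticesShifted,\susceptibleDiscreteShifted{\timeDiscrete}] + \lyapunovHelper[\infectedEquilibrium,\infectedDiscrete{\timeDiscrete}]$ separately. Since $g$ is continuous and non-negative on $(0,\infty)$, it is bounded by an absolute constant on every compact subinterval of $(0,\infty)$. Hence it suffices, in each summand, to check that the scaling factor $x^*$ is in $\bigO{\numberOfVertices}$ and that the ratio $x/x^*$ stays in a fixed compact subinterval of $(0,\infty)$ whose endpoints do not depend on $\numberOfVertices$.

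For the first summand I would use that $\numberOfVerticesShifted = \left(1 + \frac{\deimmunizationRate}{\infectionConstant}\right)\numberOfVertices \in \bigTheta{\numberOfVertices}$ and that $\susceptibleDiscreteShifted{\timeDiscrete} = \susceptibleDiscrete{\timeDiscrete} + \frac{\deimmunizationRate}{\infectionConstant}\numberOfVertices$ satisfies $\frac{\deimmunizationRate}{\infectionConstant}\numberOfVertices \leq \susceptibleDiscreteShifted{\timeDiscrete} \leq \numberOfVerticesShifted$; since $\deimmunizationRate$ and $\infectionConstant$ are constants, the ratio $\susceptibleDiscreteShifted{\timeDiscrete}/\numberOfVerticesShifted$ stays in a constant subinterval of $(0,1]$ bounded away from $0$, so $\lyapunovHelper[\numberOfVerticesShifted,\susceptibleDiscreteShifted{\timeDiscrete}] \in \bigO{\numberOfVertices}$. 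For the second summand I would use that $\infectedEquilibrium = \frac{\deimmunizationRate(\infectionConstant-1)}{(1+\deimmunizationRate)\infectionConstant}\numberOfVertices \in \bigTheta{\numberOfVertices}$, which is positive because $\infectionConstant > 1$ and $\deimmunizationRate$ is a positive constant, together with the hypothesis $\varepsilon\numberOfVertices \leq \infectedDiscrete{\timeDiscrete} \leq \numberOfVertices$, the upper bound holding because $G'$ has only $\numberOfVertices$ vertices; then $\infectedDiscrete{\timeDiscrete}/\infectedEquilibrium$ lies in a compact subinterval of $(0,\infty)$ with endpoints independent of $\numberOfVertices$, and the same argument gives $\lyapunovHelper[\infectedEquilibrium,\infectedDiscrete{\timeDiscrete}] \in \bigO{\numberOfVertices}$. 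Adding the two bounds completes the proof.

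I do not expect a genuine obstacle here; the only point requiring a little care is that both arguments of $\lyapunovHelper$ must stay bounded away from $0$ by a constant multiple of $\numberOfVertices$, so that the logarithmic term $x^*\ln(x/x^*)$ cannot blow up. For $\susceptibleDiscreteShifted{\timeDiscrete}$ this is exactly what the additive shift $\frac{\deimmunizationRate}{\infectionConstant}\numberOfVertices$ provides, and for $\infectedDiscrete{\timeDiscrete}$ it is precisely the assumption $\infectedDiscrete{\timeDiscrete} \geq \varepsilon\numberOfVertices$; in particular both arguments are positive for sufficiently large $\numberOfVertices$, so $\lyapunovHelper$ is well defined.
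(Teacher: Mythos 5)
Your proof is correct and follows essentially the same route as the paper: both argue that each summand $\lyapunovHelper[x^*,x]$ is $\bigO{\numberOfVertices}$ because the scaling factor $x^*$ is $\bigTheta{\numberOfVertices}$ while the ratio $x/x^*$ stays in a fixed compact subinterval of $(0,\infty)$ independent of $\numberOfVertices$, using $\susceptibleDiscreteShifted{\timeDiscrete} \geq \frac{\deimmunizationRate}{\infectionConstant}\numberOfVertices$ and $\infectedDiscrete{\timeDiscrete} \geq \varepsilon\numberOfVertices$. The paper carries out the same bound by hand rather than abstracting through the factorization $\lyapunovHelper[x^*,x] = x^* g(x/x^*)$, but the content is identical.
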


\begin{proof}
    We aim to bound $\potentialSIRSClique{\lyapunovParameter}{\timeDiscrete}$ from above by writing it as a sum and bounding the individual summands. To this end, we first bound the terms that appear in the summands. By the definition of our random variables and the fact that there are only \numberOfVertices vertices, we get
    \begin{align*}
        \max(\susceptibleDiscreteShifted{\timeDiscrete},\infectedDiscrete{\timeDiscrete},\infectedEquilibrium) &\leq \numberOfVerticesShifted,\\
        \min(\susceptibleDiscreteShifted{\timeDiscrete},\infectedDiscrete{\timeDiscrete}) &\geq \min(\varepsilon,\deimmunizationRate/\infectionConstant)\numberOfVertices.
    \end{align*}

    Applying these bounds to \potentialSIRSClique{\lyapunovParameter}{\timeDiscrete} results in
    \begin{align*}
        \potentialSIRSClique{\lyapunovParameter}{\timeDiscrete} &= \lyapunovHelper[\numberOfVerticesShifted,\susceptibleDiscreteShifted{\timeDiscrete}]+ \lyapunovHelper[\infectedEquilibrium,\infectedDiscrete{\timeDiscrete}]\\
        &=  \numberOfVerticesShifted \left(\frac{\susceptibleDiscreteShifted{\timeDiscrete}}{\numberOfVerticesShifted} - \ln \frac{\susceptibleDiscreteShifted{\timeDiscrete}}{\numberOfVerticesShifted} -1 \right) + \infectedEquilibrium \left(\frac{\infectedDiscrete{\timeDiscrete}}{\infectedEquilibrium} - \ln \frac{\infectedDiscrete{\timeDiscrete}}{\infectedEquilibrium} -1 \right)\\
        &\leq  \susceptibleDiscreteShifted{\timeDiscrete} + \numberOfVerticesShifted \ln \frac{\numberOfVerticesShifted}{\susceptibleDiscreteShifted{\timeDiscrete}} +  \infectedDiscrete{\timeDiscrete} + \infectedEquilibrium \ln \frac{\infectedEquilibrium}{\infectedDiscrete{\timeDiscrete}}\\
        &\leq 2 \cdot \left(\numberOfVerticesShifted + \numberOfVerticesShifted \ln \frac{\numberOfVerticesShifted}{\min(\varepsilon,\deimmunizationRate/\infectionConstant)\numberOfVertices}\right).
    \end{align*}

    As $\numberOfVerticesShifted = (1 + \deimmunizationRate/ \infectionConstant) \numberOfVertices$, the calculated upper bound for $\potentialSIRSClique{\lyapunovParameter}{\timeDiscrete}$ is linear in \numberOfVertices. Thus, $\potentialSIRSClique{\lyapunovParameter}{\timeDiscrete} \in \bigO{\numberOfVertices}$.
\end{proof}

The next lemma shows that when the number of vertices becomes small, the potential gets rather large. Together with the previous lemma, this shows that having few infected vertices and having a high drift is more or less the same.

\begin{lemma}\label{lem:lowEpsilonSIRS}
Let $G$ be a graph, and let $G'$ be a subgraph of $G$ that is an $(\numberOfVertices,(1 \pm \degreeGap)\degree,\spectralGap)$-expander. Let $\degree\rightarrow\infty$ and $\spectralGap,\degreeGap\rightarrow0$ as $\numberOfVertices\rightarrow\infty$. Let \contactProcess be a SIRS process on $G$ with infection rate~$\infectionRate$ and with constant deimmunization rate \deimmunizationRate. Consider the projection \contactProcessProjection of \contactProcess onto $G'$. Let $\timeDiscrete \in \N$ and $\varepsilon \in (0,\infectedEquilibrium / \numberOfVertices)$ be constants such that $1 \leq \infectedDiscrete{\timeDiscrete} \leq \varepsilon \numberOfVertices$.
If $\infectionRate \geq \frac{\infectionConstant}{\degree}$ for a constant $\infectionConstant \in \R_{>1}$, then
    \begin{align*}
        \potentialSIRSClique{\lyapunovParameter}{\timeDiscrete} &\geq \infectedEquilibrium \left( \ln \frac{1}{\varepsilon} + \ln \frac{\infectedEquilibrium}{\numberOfVertices} - 1\right). \qedhere
    \end{align*}
\end{lemma}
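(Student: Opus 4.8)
The plan is to discard the susceptible part of the potential and to minimize the infected part over the admissible range of $\infectedDiscrete{\timeDiscrete}$. Recall from \Cref{def:lyapunovPotential} that $\potentialSIRSClique{\lyapunovParameter}{\timeDiscrete} = \lyapunovHelper[\numberOfVerticesShifted,\susceptibleDiscreteShifted{\timeDiscrete}] + \lyapunovHelper[\infectedEquilibrium,\infectedDiscrete{\timeDiscrete}]$, and recall from the discussion after \Cref{def:laypunocHelper} that for every $x^* \in \R_{>0}$ the map $x \mapsto \lyapunovHelper[x^*,x]$ has its unique (hence global) minimum at $x = x^*$, where its value is $x^*(1 - \ln 1 - 1) = 0$. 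In particular $\lyapunovHelper[\numberOfVerticesShifted,\susceptibleDiscreteShifted{\timeDiscrete}] \geq 0$, so it suffices to lower-bound $\lyapunovHelper[\infectedEquilibrium,\infectedDiscrete{\timeDiscrete}]$.

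Next I would invoke monotonicity. From the identity $\frac{\mathrm{d}}{\mathrm{d}x}\lyapunovHelper[x^*,x] = 1 - x^*/x$ recorded after \Cref{def:laypunocHelper}, the map $x \mapsto \lyapunovHelper[\infectedEquilibrium,x]$ is strictly decreasing on $(0,\infectedEquilibrium)$. The two hypotheses of the lemma are exactly what is needed to place the whole admissible range of $\infectedDiscrete{\timeDiscrete}$ inside this interval: $\infectedDiscrete{\timeDiscrete} \geq 1 > 0$ keeps us in the domain of $\lyapunovHelper$, and $\varepsilon < \infectedEquilibrium/\numberOfVertices$ gives $\infectedDiscrete{\timeDiscrete} \leq \varepsilon\numberOfVertices < \infectedEquilibrium$. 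Hence $\lyapunovHelper[\infectedEquilibrium,\infectedDiscrete{\timeDiscrete}] \geq \lyapunovHelper[\infectedEquilibrium,\varepsilon\numberOfVertices]$.

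Finally I would substitute $x = \varepsilon\numberOfVertices$ into \Cref{def:laypunocHelper} and simplify, using $-\ln\frac{\varepsilon\numberOfVertices}{\infectedEquilibrium} = \ln\frac{1}{\varepsilon} + \ln\frac{\infectedEquilibrium}{\numberOfVertices}$:
\begin{align*}
    \lyapunovHelper[\infectedEquilibrium,\varepsilon\numberOfVertices]
    &= \infectedEquilibrium\left(\frac{\varepsilon\numberOfVertices}{\infectedEquilibrium} - \ln\frac{\varepsilon\numberOfVertices}{\infectedEquilibrium} - 1\right)
    = \varepsilon\numberOfVertices + \infectedEquilibrium\left(\ln\frac{1}{\varepsilon} + \ln\frac{\infectedEquilibrium}{\numberOfVertices} - 1\right).
\end{align*}
Since $\varepsilon\numberOfVertices \geq 0$, dropping this nonnegative term yields $\potentialSIRSClique{\lyapunovParameter}{\timeDiscrete} \geq \lyapunovHelper[\infectedEquilibrium,\varepsilon\numberOfVertices] \geq \infectedEquilibrium(\ln\frac{1}{\varepsilon} + \ln\frac{\infectedEquilibrium}{\numberOfVertices} - 1)$, as claimed. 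There is no real obstacle here: the entire content of the argument is the monotonicity step, and the lemma's hypotheses were evidently chosen precisely so that the whole range of $\infectedDiscrete{\timeDiscrete}$ lies in the region where $\lyapunovHelper[\infectedEquilibrium,\cdot]$ is decreasing, so that its minimum over that range is attained at the right endpoint $\varepsilon\numberOfVertices$.
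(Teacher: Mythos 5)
Your proof is correct and follows essentially the same route as the paper's: drop the nonnegative susceptible term $\lyapunovHelper[\numberOfVerticesShifted,\susceptibleDiscreteShifted{\timeDiscrete}] \geq 0$, use that $x \mapsto \lyapunovHelper[\infectedEquilibrium,x]$ is decreasing for $x < \infectedEquilibrium$ together with $\infectedDiscrete{\timeDiscrete} \leq \varepsilon\numberOfVertices < \infectedEquilibrium$, and then expand $\lyapunovHelper[\infectedEquilibrium,\varepsilon\numberOfVertices]$ and discard the residual $\varepsilon\numberOfVertices \geq 0$. The only cosmetic difference is that you make the $\varepsilon\numberOfVertices$ term explicit before dropping it, which if anything is a slight gain in clarity over the paper's one-line final inequality.
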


\begin{proof}
We aim to bound $\potentialSIRSClique{\lyapunovParameter}{\timeDiscrete}$ from below by bounding the $\lyapunovHelper$ values in its definition. Recall that for a given $x^* \in \R_{>0}$, the function $\lyapunovHelper[x^*,x]$ is minimized for $x=x^*$, which is the only local extreme value for $x \in \R_{>0}$. Therefore, we get for all $x,x^* \in \R_{>0}$
\begin{align*}
    \lyapunovHelper[x^*,x] \geq \lyapunovHelper[x^*,x^*] = 0.
\end{align*}

Using $1 \leq \infectedDiscrete{\timeDiscrete} \leq \varepsilon \numberOfVertices$ and that for all $x^* \in \R_{>0}$, the function $\lyapunovHelper[x^*,x]$ decreases monotonically in $x$ while $x<x^*$, we conclude
\begin{align*}
    \potentialSIRSClique{\lyapunovParameter}{\timeDiscrete} &= \lyapunovHelper[\numberOfVerticesShifted,\susceptibleDiscreteShifted{\timeDiscrete}]+ \lyapunovHelper[\infectedEquilibrium,\infectedDiscrete{\timeDiscrete}]\\
    &\geq 0+  \lyapunovHelper[\infectedEquilibrium,\varepsilon \numberOfVertices]\\
    &\geq  \infectedEquilibrium \left( \frac{\varepsilon \numberOfVertices}{\infectedEquilibrium} - \ln \frac{\varepsilon \numberOfVertices}{\infectedEquilibrium} - 1\right) \\
    &\geq  \infectedEquilibrium \left( \ln \frac{1}{\varepsilon} + \ln \frac{\infectedEquilibrium}{\numberOfVertices} - 1\right). \qedhere
\end{align*}
\end{proof}

The next lemma shows that while the process has at least a constant fraction of vertices in the infected state, each potential next step only changes the potential by at most a constant.

\begin{lemma}\label{lem:constamtStepSIRS}
    Let $G$ be a graph, and let $G'$ be a subgraph of $G$ that is an $(\numberOfVertices,(1 \pm \degreeGap)\degree,\spectralGap)$-expander. Let $\degree\rightarrow\infty$ and $\spectralGap,\degreeGap\rightarrow0$ as $\numberOfVertices\rightarrow\infty$. Let \contactProcess be a SIRS process on $G$ with infection rate~$\infectionRate$ and with constant deimmunization rate \deimmunizationRate. Consider the projection \contactProcessProjection of \contactProcess onto~$G'$. Let $\timeDiscrete \in \N$ and $\varepsilon \in (0,\deimmunizationRate/\infectionConstant)$ be constants. Assume that $\infectedDiscrete{\timeDiscrete} \geq \varepsilon \numberOfVertices$. Further, let $\Delta P, \Delta I \in \{-1,0,1\}$.
    If $\infectionRate \geq \frac{\infectionConstant}{\degree}$ for a constant $\infectionConstant \in \R_{>1}$, then for sufficiently large \numberOfVertices, it holds that
    \begin{align*}
        |\lyapunovFunction{\lyapunovParameter}[\susceptibleDiscreteShifted{\timeDiscrete}+\Delta P,\infectedDiscrete{\timeDiscrete}+\Delta I] - \lyapunovFunction{\lyapunovParameter}[\susceptibleDiscreteShifted{\timeDiscrete},\infectedDiscrete{\timeDiscrete}]| &\leq 2\left(1+2 (1+ \deimmunizationRate/\infectionConstant)\varepsilon^{-1}\right). \qedhere
    \end{align*}
\end{lemma}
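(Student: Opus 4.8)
The plan is to exploit that, by \Cref{def:lyapunovPotential}, the potential separates into a sum of two one-variable functions, $\potentialSIRSClique{\lyapunovParameter}{\timeDiscrete} = \lyapunovHelper[\numberOfVerticesShifted,\susceptibleDiscreteShifted{\timeDiscrete}] + \lyapunovHelper[\infectedEquilibrium,\infectedDiscrete{\timeDiscrete}]$. Hence, by the triangle inequality,
\[
  \left|\lyapunovFunction{\lyapunovParameter}[\susceptibleDiscreteShifted{\timeDiscrete}+\Delta P,\infectedDiscrete{\timeDiscrete}+\Delta I] - \lyapunovFunction{\lyapunovParameter}[\susceptibleDiscreteShifted{\timeDiscrete},\infectedDiscrete{\timeDiscrete}]\right|
  \leq \left|\lyapunovHelper[\numberOfVerticesShifted,\susceptibleDiscreteShifted{\timeDiscrete}+\Delta P] - \lyapunovHelper[\numberOfVerticesShifted,\susceptibleDiscreteShifted{\timeDiscrete}]\right|
     + \left|\lyapunovHelper[\infectedEquilibrium,\infectedDiscrete{\timeDiscrete}+\Delta I] - \lyapunovHelper[\infectedEquilibrium,\infectedDiscrete{\timeDiscrete}]\right|,
\]
so it suffices to bound each summand on the right by $1 + 2(1+\deimmunizationRate/\infectionConstant)\varepsilon^{-1}$. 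As the case of a zero shift is trivial, I may assume $\Delta P, \Delta I \in \{-1,1\}$.

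The second step is a generic one-step estimate for \lyapunovHelper. For every $x^* \in \R_{>0}$, every $x \in \R_{>2}$, and every $\Delta \in \{-1,1\}$, the definition of \lyapunovHelper gives $\lyapunovHelper[x^*,x+\Delta] - \lyapunovHelper[x^*,x] = \Delta - x^*(\ln(x+\Delta) - \ln x)$, so
\[
  \left|\lyapunovHelper[x^*,x+\Delta] - \lyapunovHelper[x^*,x]\right| \leq 1 + x^* \left|\ln(x+\Delta) - \ln x\right| \leq 1 + \frac{x^*}{x-1},
\]
where the last inequality uses $\left|\ln(x+\Delta)-\ln x\right| \le \frac{1}{x-1}$, the same elementary logarithm bound already used in the proof of \Cref{lem:lyapunovHelper}; alternatively, the upper bounds can be read off directly from \Cref{lem:lyapunovHelper} and the matching lower bounds follow from convexity of $\lyapunovHelper[x^*,\cdot]$.

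It then remains to instantiate this estimate at $(x^*,x) = (\numberOfVerticesShifted,\susceptibleDiscreteShifted{\timeDiscrete})$ and at $(x^*,x) = (\infectedEquilibrium,\infectedDiscrete{\timeDiscrete})$ and to control the two ratios, which is where the hypotheses enter. Since $\susceptibleDiscreteShifted{\timeDiscrete} = \susceptibleDiscrete{\timeDiscrete} + \frac{\deimmunizationRate}{\infectionConstant}\numberOfVertices \geq \frac{\deimmunizationRate}{\infectionConstant}\numberOfVertices$, for sufficiently large \numberOfVertices we have $\susceptibleDiscreteShifted{\timeDiscrete} - 1 \geq \frac{\deimmunizationRate}{2\infectionConstant}\numberOfVertices$, and hence, using $\numberOfVerticesShifted = (1+\deimmunizationRate/\infectionConstant)\numberOfVertices$, that $\frac{\numberOfVerticesShifted}{\susceptibleDiscreteShifted{\timeDiscrete}-1} \leq 2(1+\infectionConstant/\deimmunizationRate)$; likewise $\infectedEquilibrium \leq \numberOfVertices$ together with the assumption $\infectedDiscrete{\timeDiscrete} \geq \varepsilon\numberOfVertices$ gives $\frac{\infectedEquilibrium}{\infectedDiscrete{\timeDiscrete}-1} \leq \frac{2}{\varepsilon}$ for large \numberOfVertices. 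Both right-hand sides are at most $2(1+\deimmunizationRate/\infectionConstant)\varepsilon^{-1}$: this is immediate for $2/\varepsilon$, while for $2(1+\infectionConstant/\deimmunizationRate) = 2 + 2\infectionConstant/\deimmunizationRate$ it is exactly here that the assumption $\varepsilon < \deimmunizationRate/\infectionConstant$ is used, since then $2\varepsilon^{-1} \geq 2\infectionConstant/\deimmunizationRate$ and $2(\deimmunizationRate/\infectionConstant)\varepsilon^{-1} \geq 2$, so their sum $2(1+\deimmunizationRate/\infectionConstant)\varepsilon^{-1}$ dominates. Feeding these into the generic estimate bounds each of the two summands by $1 + 2(1+\deimmunizationRate/\infectionConstant)\varepsilon^{-1}$, and adding them gives the claimed bound $2\bigl(1+2(1+\deimmunizationRate/\infectionConstant)\varepsilon^{-1}\bigr)$. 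I do not expect a genuine obstacle here: the argument is essentially bookkeeping, and the only point needing care is absorbing the additive $-1$ in the denominators into ``sufficiently large \numberOfVertices'' and invoking $\varepsilon < \deimmunizationRate/\infectionConstant$ at precisely the right place so that the constant matches the one in the statement.
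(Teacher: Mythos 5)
Your proof is correct and follows essentially the same route as the paper: split $\lyapunovFunction{\lyapunovParameter}$ via the triangle inequality, use the elementary bound $|\lyapunovHelper[x^*,x\pm 1]-\lyapunovHelper[x^*,x]|\leq 1+\frac{x^*}{x-1}$, and then control the two ratios. The only (inessential) difference is that the paper bounds both ratios uniformly by $\frac{\numberOfVerticesShifted}{\varepsilon\numberOfVertices/2}$ using $\min(\susceptibleDiscreteShifted{\timeDiscrete}-1,\infectedDiscrete{\timeDiscrete}-1)\geq\varepsilon\numberOfVertices/2$, whereas you bound each ratio separately by $2(1+\infectionConstant/\deimmunizationRate)$ and $2/\varepsilon$ and then verify both are dominated by the target constant via $\varepsilon<\deimmunizationRate/\infectionConstant$; this is a cosmetic reorganization, not a different argument.
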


\begin{proof}
We aim to use the triangle inequality to bound the absolute change in the $\lyapunovFunction{\lyapunovParameter}$-values from above by the sum of the absolute changes in the $\lyapunovHelper$-values. We use that for all $x \in \R_{>1}$ holds that
\begin{align*}
    \frac{1}{x+1} < \ln\left(\frac{x+1}{x}\right) <  \frac{1}{x}.
\end{align*}
Further, for all $x, x^* \in \R_{>2}$ and $\Delta x \in \{-1,0,1\}$ holds that
\begin{align*}
    |\lyapunovHelper[x^*,x+\Delta x]- \lyapunovHelper[x^*,x]| &= \left|x^* \left( \frac{x+\Delta x}{x^*} - \ln \frac{x+\Delta x}{x^*}-1\right) - x^* \left( \frac{x}{x^*} - \ln \frac{x}{x^*}-1\right)\right|\\
    &= \left| \Delta x - x^* \ln\left(\frac{x + \Delta x}{x} \right)\right|\\
    &\leq |\Delta x| + \left| x^* \ln\left(\frac{x + \Delta x}{x} \right)\right|\\
    &\leq 1 + \frac{x^*}{x-1}.
\end{align*}

We apply this inequality to bound the absolute change in potential from above. Note that for sufficiently large \numberOfVertices, it holds that $\min(\susceptibleDiscreteShifted{\timeDiscrete}-1,\infectedDiscrete{\timeDiscrete}-1) \geq \varepsilon \numberOfVertices/2$. We conclude
\begin{align*}
    &|\lyapunovFunction{\lyapunovParameter}[\susceptibleDiscreteShifted{\timeDiscrete}+\Delta P,\infectedDiscrete{\timeDiscrete}+\Delta I] - \lyapunovFunction{\lyapunovParameter}[\susceptibleDiscreteShifted{\timeDiscrete},\infectedDiscrete{\timeDiscrete}]|\\
    &= \left| \lyapunovHelper[\numberOfVerticesShifted,\susceptibleDiscreteShifted{\timeDiscrete}+\Delta P] + \lyapunovHelper[\infectedEquilibrium, \infectedDiscrete{\timeDiscrete}+\Delta I]- \lyapunovHelper[\numberOfVerticesShifted,\susceptibleDiscreteShifted{\timeDiscrete}] - \lyapunovHelper[\infectedEquilibrium, \infectedDiscrete{\timeDiscrete}]\right|\\
    &\leq \left| \lyapunovHelper[\numberOfVerticesShifted,\susceptibleDiscreteShifted{\timeDiscrete}+\Delta P] -
    \lyapunovHelper[\numberOfVerticesShifted,\susceptibleDiscreteShifted{\timeDiscrete}] \right|
    + \left| \lyapunovHelper[\infectedEquilibrium, \infectedDiscrete{\timeDiscrete}+\Delta I] - \lyapunovHelper[\infectedEquilibrium, \infectedDiscrete{\timeDiscrete}] \right|\\
    &\leq \left(1+ \frac{\numberOfVerticesShifted}{\susceptibleDiscreteShifted{\timeDiscrete}-1}\right) + \left(1 + \frac{\infectedEquilibrium}{\infectedDiscrete{\timeDiscrete}-1}\right)\\
    &\leq 2(1+ \frac{\numberOfVerticesShifted}{\varepsilon \numberOfVertices/2})\\
    &\leq 2(1+2 (1+ \deimmunizationRate/\infectionConstant)\varepsilon^{-1}).\qedhere
\end{align*}
\end{proof}

We now have the tools to apply the negative-drift theorem (\Cref{pre:NegativeDrift}) to bound the survival time of the infection.

\begin{lemma}\label{lem:longSurvivalCliqueSIRS}
    Let $G$ be a graph, and let $G'$ be a subgraph of $G$ that is an $(\numberOfVertices,(1 \pm \degreeGap)\degree,\spectralGap)$-expander. Let $\degree\rightarrow\infty$ and $\spectralGap,\degreeGap\rightarrow0$ as $\numberOfVertices\rightarrow\infty$. Let \contactProcess be a SIRS process on $G$ with infection rate~$\infectionRate$ and with constant deimmunization rate \deimmunizationRate. Consider the projection~\contactProcessProjection of~\contactProcess onto~$G'$. Let $\varepsilon_0 \in (0,1)$ be a constant and let $E_{\varepsilon_0}$ be the event that there exists a $\timeDiscrete \in \N$ such that $\infectedDiscrete{\timeDiscrete} \geq \varepsilon_0 \numberOfVertices$. Let $T$ be the first time after $\timeContinuous{\timeDiscrete}$ with $\infectedDiscrete{\timeDiscrete}=0$.
    If $\infectionRate \geq \frac{\infectionConstant}{\degree}$ for a constant $\infectionConstant \in \R_{>1}$, then $\E{T}[E_{\varepsilon_0}] = 2^{ \bigOmega{\numberOfVertices}}$.
\end{lemma}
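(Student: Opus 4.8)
The plan is to apply the negative-drift theorem (\Cref{pre:NegativeDrift}) to the potential process, restarted at the first step at which the infected count in $\contactProcessProjection$ reaches $\varepsilon_0\numberOfVertices$. Concretely, let $t_0 \defeq \inf\{\timeDiscrete : \infectedDiscrete{\timeDiscrete} \geq \varepsilon_0\numberOfVertices\}$ (the step $\timeDiscrete$ from the statement); this is a stopping time and $E_{\varepsilon_0} = \{t_0 < \infty\}$. I would set $X_s \defeq \potentialSIRSClique{\lyapunovParameter}{t_0+s}$, equipped with the shifted natural filtration, so that $\E{X_{s+1}-X_s}[\filtrationDiscrete{t_0+s}] = \driftSIRSClique{\lyapunovParameter}{t_0+s}$ by \Cref{def:lyapunovPotential} (the strong Markov property legitimises the restart). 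The goal is then to produce an interval $[a,b]$ with $b-a \in \bigTheta{\numberOfVertices}$ on which $(X_s)$ satisfies both conditions of \Cref{pre:NegativeDrift}; this forces $\contactProcessProjection$ to need $2^{\bigOmega{\numberOfVertices}}$ steps to go extinct, which I convert afterwards into the claimed bound on the continuous survival time.

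First I would fix the constants. Choose a constant $\varepsilon_{\mathrm{hi}} \leq \varepsilon_0$ small enough that \Cref{lem:constantDriftSIRS} may be invoked with $\varepsilon_{\mathrm{hi}}$ in the role of its upper threshold; by \Cref{lem:highEpsilonSIRS}, $\infectedDiscrete{\timeDiscrete} \geq \varepsilon_{\mathrm{hi}}\numberOfVertices$ then forces $\potentialSIRSClique{\lyapunovParameter}{\timeDiscrete} \leq a$ for the explicit linear bound $a \in \bigTheta{\numberOfVertices}$ appearing in that lemma's proof, and this $a$ is now a \emph{fixed} multiple of $\numberOfVertices$. Next choose a constant $\varepsilon_{\mathrm{lo}} \in (0,\varepsilon_{\mathrm{hi}})$ small enough that (i)~\Cref{lem:constantDriftSIRS} applies with $\varepsilon_{\mathrm{lo}}$ in the role of its lower threshold, (ii)~$\varepsilon_{\mathrm{lo}} < \infectedEquilibrium/\numberOfVertices$ and $\varepsilon_{\mathrm{lo}} < \deimmunizationRate/\infectionConstant$, so that \Cref{lem:lowEpsilonSIRS} and \Cref{lem:constamtStepSIRS} apply, and (iii)~the bound $b \defeq \infectedEquilibrium\bigl(\ln(1/\varepsilon_{\mathrm{lo}}) + \ln(\infectedEquilibrium/\numberOfVertices) - 1\bigr)$ from \Cref{lem:lowEpsilonSIRS} exceeds $a$. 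Item~(iii) is attainable precisely because $a/\numberOfVertices$ is already frozen while $b/\numberOfVertices \to \infty$ as $\varepsilon_{\mathrm{lo}} \to 0$; in particular $l \defeq b-a \in \bigTheta{\numberOfVertices}$.

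It remains to verify the two conditions of \Cref{pre:NegativeDrift}. Contraposing \Cref{lem:highEpsilonSIRS} and \Cref{lem:lowEpsilonSIRS} (and using that $\potentialSIRSClique{\lyapunovParameter}{\timeDiscrete} = +\infty$ when $\infectedDiscrete{\timeDiscrete}=0$), the event $\{a < X_s < b\}$ is contained in $\{\varepsilon_{\mathrm{lo}}\numberOfVertices \leq \infectedDiscrete{t_0+s} \leq \varepsilon_{\mathrm{hi}}\numberOfVertices\}$, on which \Cref{lem:constantDriftSIRS} gives $\driftSIRSClique{\lyapunovParameter}{t_0+s} \leq -\alpha_0$ for a constant $\alpha_0 > 0$ (the constant called $a$ there); this is the first condition. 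For the second, note that a single step changes $(\susceptibleDiscreteShifted{\timeDiscrete},\infectedDiscrete{\timeDiscrete})$ by $(-1,+1)$, $(0,-1)$, or $(+1,0)$ (an infection, a recovery, or a deimmunization), and that on $\{s < T\}$, with $T \defeq \inf\{s : X_s \geq b\}$, we have $X_s < b$ and hence $\infectedDiscrete{t_0+s} > \varepsilon_{\mathrm{lo}}\numberOfVertices$; therefore \Cref{lem:constamtStepSIRS} bounds $|X_{s+1}-X_s|$ by a constant $K$. The second condition then holds with $\delta \defeq 1$ and the (constant) function $r(l) \defeq 2^K$, which satisfies $1 \leq r(l) \in \smallO{l/\log(l)}$ because $l \in \bigTheta{\numberOfVertices}$.

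\Cref{pre:NegativeDrift} now provides a constant $c_1 > 0$ with $\Pr{T \leq 2^{c_1\numberOfVertices}}[E_{\varepsilon_0}] = 2^{-\bigOmega{\numberOfVertices}}$, using that $X_0 = \potentialSIRSClique{\lyapunovParameter}{t_0} \leq a$ on $E_{\varepsilon_0}$ (since $\infectedDiscrete{t_0} \geq \varepsilon_0\numberOfVertices \geq \varepsilon_{\mathrm{hi}}\numberOfVertices$). Because $\infectedDiscrete{\timeDiscrete}$ changes by at most $1$ per step, every trajectory from $\infectedDiscrete{t_0} \geq \varepsilon_0\numberOfVertices$ down to $0$ must hit a step with $1 \leq \infectedDiscrete{\timeDiscrete} \leq \varepsilon_{\mathrm{lo}}\numberOfVertices$, where \Cref{lem:lowEpsilonSIRS} forces $\potentialSIRSClique{\lyapunovParameter}{\timeDiscrete} \geq b$; hence the number of steps until extinction is at least $T$, and the bound above makes it exceed $2^{c_1\numberOfVertices}$ with probability $1 - 2^{-\bigOmega{\numberOfVertices}}$ given $E_{\varepsilon_0}$, so its conditional expectation is $2^{\bigOmega{\numberOfVertices}}$. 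Finally, recovery events fire at rate $\rateRecover{\timeDiscrete} = \infectedDiscrete{\timeDiscrete} \leq \numberOfVertices$ and deimmunization events at rate $\rateSusceptible{\timeDiscrete} = \deimmunizationRate\recoveredDiscrete{\timeDiscrete} \leq \deimmunizationRate\numberOfVertices$ at all times, and the number of infection steps never exceeds the number of recovery steps (the infected count ends at $0$), so the expected number of steps before extinction is at most $(2+\deimmunizationRate)\numberOfVertices$ times the expected elapsed continuous time (both conditioned on $E_{\varepsilon_0}$); combining this with the step bound yields $\E{T}[E_{\varepsilon_0}] = 2^{\bigOmega{\numberOfVertices}}$. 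I expect the main obstacle to be the bookkeeping of the nested thresholds $\varepsilon_0 \geq \varepsilon_{\mathrm{hi}} > \varepsilon_{\mathrm{lo}}$: one must pin down $\varepsilon_{\mathrm{hi}}$ (hence $a$) before shrinking $\varepsilon_{\mathrm{lo}}$ so that the separation $b > a$ can be opened, all while simultaneously respecting the smallness requirements of \Cref{lem:constantDriftSIRS}, \Cref{lem:lowEpsilonSIRS}, and \Cref{lem:constamtStepSIRS}; the conversion from step count to continuous time is a comparatively routine last step.
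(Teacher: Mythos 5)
Your proposal is correct and matches the paper's proof in all essentials: you apply \Cref{pre:NegativeDrift} to the potential $\potentialSIRSClique{\lyapunovParameter}{}$ on a linear-length interval delimited via \Cref{lem:highEpsilonSIRS} and \Cref{lem:lowEpsilonSIRS}, verify the drift and step-size conditions with \Cref{lem:constantDriftSIRS} and \Cref{lem:constamtStepSIRS}, and convert step count to continuous time via a Wald-type argument bounding the trigger rates. The only cosmetic difference is the bookkeeping of the interval endpoints (the paper fixes $[a\numberOfVertices, 2a\numberOfVertices]$ with $a = \max(a_0,a_1)$ and then shrinks $\varepsilon_1$ to push the \Cref{lem:lowEpsilonSIRS} bound above $2a\numberOfVertices$, whereas you take the endpoint $b$ directly from that lemma), which has no effect on the argument.
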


\begin{proof}
We assume that $E_{\varepsilon_0}$ occurs. Let $(\filtrationContinuous{\timePoint})_{\timePoint \in \R_{\geq 0}}$ be the natural filtration of \contactProcess, and let $\timeDiscrete \in \N$ be such that $\infectedDiscrete{\timeDiscrete} \geq \varepsilon_0 \numberOfVertices$. We aim to apply the negative-drift theorem (\Cref{pre:NegativeDrift}) to get the desired bound. To this end, we define a stopping time that is dominated by the number of steps until $T$, and we use the previous lemmas to show that all of the conditions for the drift theorem are satisfied. Note that we shift the time to start at $\timeDiscrete$ instead of $0$. We then translate the bound on the number of steps into a bound on the survival time.

As $\infectedDiscrete{\timeDiscrete} \geq \varepsilon_0 \numberOfVertices$, by \Cref{lem:highEpsilonSIRS}, there exists a constant $a_0 \in \R_{>0}$ such that $\potentialSIRSClique{\lyapunovParameter}{\timeDiscrete} \leq a_0 \numberOfVertices$. Let $\varepsilon_c$ be the minimum of the constants from \Cref{lem:increasedInfection,lem:constantDriftSIRS}. By the contraposition of \Cref{lem:highEpsilonSIRS}, there exists a constant $a_1 \in \R_{>0}$ such that $\potentialSIRSClique{\lyapunovParameter}{\timeDiscrete} \geq a_1 \numberOfVertices$ implies that $\infectedDiscrete{\timeDiscrete} \leq \varepsilon_c \numberOfVertices$. We define $a = \max(a_0,a_1)$ and $T_1 = \inf\{i \in \N_{\geq t} \mid \potentialSIRSClique{\lyapunovParameter}{i} > 2a\numberOfVertices \}$.

We first show that for all $i \in \N$ with $\timeDiscrete \leq i < T_1$ holds that $\infectedDiscrete{i}$ is large enough such that \Cref{lem:constamtStepSIRS} is applicable. Let $\varepsilon_1 \in (0,\infectedEquilibrium/\numberOfVertices)$ be a constant low enough such that $ \frac{\infectedEquilibrium}{\numberOfVertices} \left( \ln \frac{1}{\varepsilon_1} + \ln \frac{\infectedEquilibrium}{\numberOfVertices} - 1\right) > 2a$. Such an $\varepsilon_1$ exists, as $\frac{\infectedEquilibrium}{\numberOfVertices}$ and $a$ are positive constants. Then by the contraposition of \Cref{lem:lowEpsilonSIRS}, for all $i \in \N$, it follows that $\potentialSIRSClique{\lyapunovParameter}{i} \leq 2a \numberOfVertices$ implies that $\infectedDiscrete{i} \geq \varepsilon_1 \numberOfVertices$.

To show that condition 2 of \Cref{pre:NegativeDrift} is satisfied, let $s = 2(1+2 (1+ \deimmunizationRate/\infectionConstant)\varepsilon_1^{-1})$. For all $i \in \N$ with $\timeDiscrete \leq i < T_1$ holds $\potentialSIRSClique{\lyapunovParameter}{i} \leq 2a \numberOfVertices$ and therefore $\infectedDiscrete{i} \geq \varepsilon_1 \numberOfVertices$. Hence, by \Cref{lem:constamtStepSIRS}, for all $i \in \N_{\geq \timeDiscrete}$ holds that $|\potentialSIRSClique{\lyapunovParameter}{i+1} - \potentialSIRSClique{\lyapunovParameter}{i}| \cdot \indicator{i<T_1}\leq s \cdot \indicator{i<T_1}$. Thus, for all $i \in \N_{\geq \timeDiscrete}$ and $j \in \R_{>0}$ holds that $\Pr{|\potentialSIRSClique{\lyapunovParameter}{i+1} - \potentialSIRSClique{\lyapunovParameter}{i}|\geq j}[\filtrationDiscrete{i}] \cdot \indicator{i<T_1} \leq \frac{2^s}{2^j} \cdot \indicator{i<T_1}$. Note that $2^s$ is a constant.

We now show that condition 1 is satisfied as well. We already showed that for all $i \in \N$ with $a \numberOfVertices< \potentialSIRSClique{\lyapunovParameter}{i}< 2a \numberOfVertices$ holds $\varepsilon_1 \numberOfVertices \leq \infectedDiscrete{\timeDiscrete} \leq \varepsilon_c\numberOfVertices$. Hence, the conditions for \Cref{lem:constantDriftSIRS} are satisfied, and we get that there exists a constant $r \in \R_{>0}$ such that for all $i \in \N$ holds that $\E{\potentialSIRSClique{\lyapunovParameter}{i+1} - \potentialSIRSClique{\lyapunovParameter}{i}}[\filtrationDiscrete{i}] \cdot \indicator{a \numberOfVertices< \potentialSIRSClique{\lyapunovParameter}{i}< 2a \numberOfVertices} \leq -r \cdot \indicator{a \numberOfVertices< \potentialSIRSClique{\lyapunovParameter}{i}< 2a \numberOfVertices}$.

Now all of the conditions of \Cref{pre:NegativeDrift} are satisfied, and we get that there exists a constant $c^* \in \R_{>0}$ such that
\begin{align*}
    \Pr{T_1 - \timeDiscrete \leq 2^{c^* a \numberOfVertices / 2^s}}[\filtrationDiscrete{\timeDiscrete}] \cdot \indicator{\potentialSIRSClique{\lyapunovParameter}{\timeDiscrete} \leq a \numberOfVertices} = 2^{-\bigOmega{a \numberOfVertices / 2^s}} \cdot \indicator{\potentialSIRSClique{\lyapunovParameter}{\timeDiscrete} \leq a \numberOfVertices}.
\end{align*}

Note that this probability goes towards~$0$ as \numberOfVertices goes towards infinity. Hence, $\E{T_1}[\filtrationDiscrete{\timeDiscrete}] \cdot \indicator{\potentialSIRSClique{\lyapunovParameter}{\timeDiscrete} \leq a \numberOfVertices}= 2^{ \bigOmega{\numberOfVertices}} \cdot \indicator{\potentialSIRSClique{\lyapunovParameter}{\timeDiscrete} \leq a \numberOfVertices}$. Remember that $\infectedDiscrete{\timeDiscrete} \geq \varepsilon_0 \numberOfVertices$ implies $\potentialSIRSClique{\lyapunovParameter}{\timeDiscrete} \leq a \numberOfVertices$. We therefore get $\E{T_1}[\filtrationDiscrete{\timeDiscrete}] \cdot \indicator{\infectedDiscrete{\timeDiscrete} \geq \varepsilon_0 \numberOfVertices}= 2^{ \bigOmega{\numberOfVertices}} \cdot \indicator{\infectedDiscrete{\timeDiscrete} \geq \varepsilon_0 \numberOfVertices}$.

We showed that for all $i \in \N$ with $\timeDiscrete \leq i < T_1$ holds that $\infectedDiscrete{i} \geq \varepsilon_1 \numberOfVertices >0$. Thus,~$T$ dominates~$\timeContinuous{T_1}$. Note that clocks in~$\contactProcess$ trigger at an arbitrarily high rate, as we do not have an upper bound on~$\rateTotalOutside{\timeDiscrete}$. However, the amounts of recovery triggers, infection triggers, and deimmunization triggers that occur until $\timeContinuous{T_1}$ differ by at most $\numberOfVertices$, pairwise by type, and each of them also has an exponential expectation. As we only consider $\numberOfVertices$ recovery clocks, they trigger at a rate of at most~$\numberOfVertices$, and the expected time between each trigger is at least $\frac{1}{\numberOfVertices}$. By Wald's equation (\Cref{pre:wald}), we get that
\begin{align*}
    \E{T}[\filtrationContinuous{0}] \cdot \indicator{E_{\varepsilon_0}} &\geq \E{\timeContinuous{T_1}}[\filtrationContinuous{0}] \cdot \indicator{E_{\varepsilon_0}}\\
    &\geq \frac{1}{\numberOfVertices}2^{ \bigOmega{\numberOfVertices}} \cdot \indicator{E_{\varepsilon_0}}. \qedhere
\end{align*}
\end{proof}

We now prove our main result.

\SIRSClique

\begin{proof}
For all $\varepsilon \in (0,1)$, let~$E_\varepsilon$ be the event that there exists a $\timeDiscrete \in \N$ such that $\infectedDiscrete{\timeDiscrete} \geq \varepsilon \numberOfVertices$. By \Cref{lem:farFromEdgeSIRS}, there exists an $\varepsilon \in \R_{>0}$ such that for sufficiently large \numberOfVertices holds that $\Pr{E_\varepsilon} \geq \frac{1}{\numberOfVertices +2}$. By \Cref{lem:longSurvivalCliqueSIRS}, it holds that $\E{T}[E_\varepsilon]=2^{ \bigOmega{\numberOfVertices}}$. Using the law of total expectation, we get
\begin{align*}
    \E{T} &= \Pr{E_\varepsilon} \E{T}[E_\varepsilon] + \Pr{\overline{E_\varepsilon}}[][\big] \E{T}[\overline{E_\varepsilon}][\big]\\
    &\geq \Pr{E_\varepsilon} \E{T}[E_\varepsilon]\\
    &\geq \frac{1}{\numberOfVertices +2} 2^{ \bigOmega{\numberOfVertices}}\\
    &= 2^{ \bigOmega{\numberOfVertices}}. \qedhere
\end{align*}
\end{proof}

\section{Special graph classes}
\label{sec:graph_classes}
We present the implications of \Cref{thm:cliqueSIRS} on special graph classes. We focus our attention to two random graph models, namely \erdosGraphs and hyperbolic random graphs.

\subsection{\erdosGraphs}

In order to apply \cref{thm:cliqueSIRS} to \erdosGraphs, we make use of the following result.

\begin{theorem}[{\cite[Theorem~$1.2$]{coja2007laplacian}}]\label{pre:erdosExpansion}
Let $G \sim G_{\numberOfVertices,p}$ be an \erdosGraph with $(\numberOfVertices-1)p \geq c_1 \ln(\numberOfVertices)$ for a sufficiently large constant $c_1 \in \R_{>0}$. Then asymptotically almost surely, for the spectral expansion $\spectralGap$ of the Laplacian of $G$ holds $\spectralGap \in \bigO{(p(\numberOfVertices-1))^{-1/2})}$.
\end{theorem}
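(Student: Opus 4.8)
The plan is to express the spectral expansion as a Rayleigh quotient over degree-balanced vectors and then reduce the whole statement to a single spectral-norm estimate for the centred adjacency matrix. Write $A$ for the adjacency matrix of $G$, let $D$ be the diagonal matrix of degrees, put $\bar d \defeq (\numberOfVertices-1)p$ for the common expected degree, and let $\bar A \defeq p(\mathbf{1}\mathbf{1}^{\top}-\mathrm{Id})$ be the entrywise mean of $A$; note that $\bar A$ has $\mathbf{1}$ as an eigenvector and acts as $-p\,\mathrm{Id}$ on $\mathbf{1}^{\perp}$. Since $D^{1/2}\mathbf{1}$ is always an eigenvector of the normalized Laplacian with eigenvalue $0$ (so it realises $\lambda_1$), substituting $y = D^{-1/2}x$ into the Rayleigh quotient of $D^{-1/2}AD^{-1/2}$ restricted to the $\mathcal A$-invariant subspace $(D^{1/2}\mathbf{1})^{\perp}$ yields the exact identity
\begin{equation*}
    \spectralGap = \max\left\{ \frac{\lvert y^{\top}A y\rvert}{\sum_{i}\mathrm{deg}(v_i)\,y_i^{2}} \;:\; y\in\R^{\numberOfVertices}\setminus\{0\},\ \textstyle\sum_{i}\mathrm{deg}(v_i)\,y_i = 0 \right\}.
\end{equation*}

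The first ingredient is a degree-concentration statement: by Chernoff bounds and a union bound over the $\numberOfVertices$ vertices, asymptotically almost surely every vertex satisfies $\mathrm{deg}(v_i) = (1\pm\varepsilon)\bar d$ for an arbitrarily small constant $\varepsilon > 0$, and this is precisely where the hypothesis $\bar d \ge c_1\ln\numberOfVertices$ with a large constant $c_1$ enters. On this event the denominator in the identity above equals $(1\pm\varepsilon)\bar d\,\lVert y\rVert^{2}$, so it suffices to prove the numerator bound $\lvert y^{\top}Ay\rvert = \bigO{\sqrt{\bar d}}\,\lVert y\rVert^{2}$ uniformly over admissible $y$, since that gives $\spectralGap = \bigO{\bar d^{-1/2}} = \bigO{(p(\numberOfVertices-1))^{-1/2}}$, as claimed.

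The second ingredient, which carries the proof, is the bound $\lVert A - \bar A\rVert_{\mathrm{op}} = \bigO{\sqrt{\bar d}}$, valid asymptotically almost surely. Granting it, write an admissible $y$ as $y = y_{\perp} + y_{\parallel}$ with $y_{\parallel}$ the orthogonal projection onto $\mathrm{span}(\mathbf{1})$. Two observations then close the numerator bound. First, because $\bar A$ acts as $-p\,\mathrm{Id}$ on $\mathbf{1}^{\perp}$, we have $\lvert y_{\perp}^{\top}A y_{\perp}\rvert \le (\lVert A-\bar A\rVert_{\mathrm{op}} + p)\lVert y_{\perp}\rVert^{2} = \bigO{\sqrt{\bar d}}\,\lVert y\rVert^{2}$. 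Second, and crucially, the vector of degree deviations is exactly $(A - \bar A)\mathbf{1}$, so the balance constraint becomes $\bar d\,(\mathbf{1}^{\top}y) = -\langle (A-\bar A)\mathbf{1},\, y\rangle$, whence $\lvert \mathbf{1}^{\top}y\rvert \le \lVert A-\bar A\rVert_{\mathrm{op}}\,\sqrt{\numberOfVertices}\,\lVert y\rVert / \bar d$ and therefore $\lVert y_{\parallel}\rVert = \bigO{\bar d^{-1/2}}\,\lVert y\rVert$; this last estimate is a factor $\sqrt{\ln\numberOfVertices}$ stronger than what the worst-case degree deviation alone would give, and it is what makes the exponent $-1/2$ attainable rather than the weaker $\sqrt{\ln\numberOfVertices/\bar d}$. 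Combining with $\lVert A\rVert_{\mathrm{op}} \le \lVert A-\bar A\rVert_{\mathrm{op}} + \lVert \bar A\rVert_{\mathrm{op}} = \bigO{\bar d}$, the mixed terms satisfy $\lvert y_{\parallel}^{\top}A y_{\perp}\rvert + \lvert y_{\parallel}^{\top}A y_{\parallel}\rvert \le \lVert A\rVert_{\mathrm{op}}\lVert y_{\parallel}\rVert(\lVert y_{\perp}\rVert + \lVert y_{\parallel}\rVert) = \bigO{\bar d}\cdot\bigO{\bar d^{-1/2}}\lVert y\rVert^{2} = \bigO{\sqrt{\bar d}}\,\lVert y\rVert^{2}$, which finishes the estimate.

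The main obstacle is the spectral-norm bound $\lVert A - \bar A\rVert_{\mathrm{op}} = \bigO{\sqrt{\bar d}}$ in the sparse regime $\bar d = \Theta(\ln\numberOfVertices)$: the moment (trace) method delivers it comfortably only when $\bar d$ exceeds a fixed power of $\ln\numberOfVertices$, while the range permitted here calls for either an $\varepsilon$-net argument combined with Bernstein's inequality that treats heavy and light entries separately (in the style of Feige and Ofek) or a sharp non-asymptotic matrix-concentration inequality; the degree concentration established above is exactly what makes such an argument go through. Since this is an established result, in the article we simply invoke \cite[Theorem~$1.2$]{coja2007laplacian} rather than reproduce the argument.
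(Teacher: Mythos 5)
This statement is not proved in the paper at all: it is quoted verbatim from \cite[Theorem~1.2]{coja2007laplacian} and used as a black box, so there is no ``paper's own proof'' to compare against. That said, your reconstruction is a sound outline of the standard route to such a bound: the Rayleigh-quotient identity over degree-balanced vectors is correct (this is exactly $\spectralGap$ since $D^{1/2}\mathbf{1}$ spans the eigenspace of $\lambda_1 = 0$ for a connected graph, and connectivity holds a.a.s.\ in the regime $\bar d \geq c_1 \ln n$); degree concentration from Chernoff controls the denominator; the decomposition $y = y_\perp + y_\parallel$ with the observation that $(A-\bar A)\mathbf{1}$ is precisely the degree-deviation vector gives $\lVert y_\parallel\rVert = \bigO{\bar d^{-1/2}}\lVert y\rVert$, which is indeed the sharper bound needed to avoid a spurious $\sqrt{\ln n}$ loss; and the cross terms close via $\lVert A\rVert_{\mathrm{op}} = \bigO{\bar d}$. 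You also correctly isolate the genuinely hard ingredient, the Feige--Ofek-type spectral-norm bound $\lVert A - \bar A\rVert_{\mathrm{op}} = \bigO{\sqrt{\bar d}}$ in the sparse regime $\bar d = \Theta(\ln n)$, which your sketch invokes rather than proves; since the theorem is an external citation, leaving this as an invocation is appropriate, but it means your proposal is a reduction to that norm estimate rather than a self-contained proof. (One cosmetic note: the symbol $\mathcal A$ in ``the $\mathcal A$-invariant subspace'' is undefined; presumably you mean $D^{-1/2}AD^{-1/2}$.)
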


By Chernoff bounds, it holds that the vertex degrees in \erdosGraphs are tightly distributed around the average degree $\degree$ if $\degree \in \smallOmega{\ln \numberOfVertices}$. Therefore, \erdosGraphs satisfy with high probability our definition of an $(\numberOfVertices,(1 \pm \degreeGap)\degree,\spectralGap)$-expander. Combining this with \Cref{thm:cliqueSIRS}, we obtain the following corollary.

\ERGraphs

\subsection{Hyperbolic random graphs}

For the formal definition of a hyperbolic random graph, we refer the reader to the article by \textcite{KPKVB10}.
The two key properties we require for our main result to be applicable on hyperbolic random graphs are the following.

\begin{theorem}[{\cite[Theorem~$1$]{friedrich2018diameter}}]\label{pre:hyperbolicDiameter}
Let $G$ be a hyperbolic random graph with $\numberOfVertices$ vertices that follows a power-law degree distribution with exponent $\hyperbolicExponent \in (2,3)$. Then the diameter of the giant component of $G$ is \bigO{(\log n)^{2/(3-\hyperbolicExponent)}} with probability $1- \bigO{n^{-3/2}}$.
\end{theorem}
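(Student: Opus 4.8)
The plan is to exploit the two-scale geometry of the native (hyperbolic-disc) representation of the model, in which the $\numberOfVertices$ points lie in a disc of radius $R = 2\ln\numberOfVertices + \bigO{1}$, the radial coordinate $r$ has density proportional to $\sinh(\alpha r)$ on $[0,R]$ with $\alpha = (\hyperbolicExponent-1)/2 \in (1/2,1)$, and two points $u,v$ at radii $r(u),r(v)$ and angular distance $\theta$ are adjacent exactly when their hyperbolic distance is at most $R$, which (up to lower-order terms) means $\theta \le 2\eulerE[(R-r(u)-r(v))/2]$, and which always holds once $r(u)+r(v)\le R$. First I would fix a single high-probability ``regular'' event on which every point count we use (numbers of points in the relevant family of $\poly{\numberOfVertices}$ angular/radial boxes) is within a constant factor of its mean, via Chernoff bounds and a union bound; this event fails with probability $\numberOfVertices^{-\omega(1)}$, and I would condition on it throughout. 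Note that, since $\hyperbolicExponent\in(2,3)$, the target exponent satisfies $2/(3-\hyperbolicExponent) = 1/(1-\alpha) > 2$, so the claim is (deliberately) only a crude, super-logarithmic upper bound.

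Step~1 pins down a dense core. The set $\mathcal{C}$ of points with $r \le R/2$ is a clique, because then $r(u)+r(v)\le R$ for any two of its members; its expected size is $\bigTheta{\numberOfVertices\eulerE[-\alpha R/2]} = \bigTheta{\numberOfVertices^{(3-\hyperbolicExponent)/2}}$, so on the regular event it is non-empty, indeed of polynomial size (this is exactly the polynomial clique the SIRS analysis also uses), and by the standard component-size bounds for this model it lies in the giant component. Hence $\operatorname{diam}$ of the giant is at most $2D+1$, where $D$ is the largest graph distance from a giant vertex to $\mathcal{C}$, and the whole task reduces to bounding $D$.

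Step~2 bounds $D$ by a layered ``move inward'' argument. Writing $\Delta(v) \defeq R - r(v)$ for the radial deficit, on the regular event the expected number of neighbours of $v$ of radius at most $\rho$ is $\bigTheta{\eulerE[\Delta(v)/2 - (\alpha-1/2)(R-\rho)]}$, so once $\Delta(v)$ exceeds a suitable $\bigO{\log\log\numberOfVertices}$ threshold this is at least $\ln\numberOfVertices$ for every $\rho$ with $R-\rho \le \Delta(v)/(\hyperbolicExponent-2)$, and a union bound over all $\numberOfVertices$ vertices shows every such $v$ has a neighbour of deficit at least $\Delta(v)/(\hyperbolicExponent-2)$. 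Iterating this multiplicative map (the factor $1/(\hyperbolicExponent-2)$ exceeds~$1$) carries the deficit from $\bigTheta{\log\log\numberOfVertices}$ past $R/2$ in $\bigO{\log\log\numberOfVertices}$ hops, so every giant vertex of radius at most $R-\bigTheta{\log\log\numberOfVertices}$ is within $\bigO{\log\log\numberOfVertices}$ of $\mathcal{C}$. What remains is the outer band $B$ of vertices of deficit $\bigO{\log\log\numberOfVertices}$ (the low-degree vertices near the boundary): there a vertex's neighbours are typically other band vertices at nearby angle rather than higher-degree vertices, so one partitions $B$ along the angular coordinate into $\poly{\numberOfVertices}$ arcs and runs a finer recursion — within $B$, the sub-band of deficit $\ge \delta$ has induced components whose diameter, and whose distance to the sub-band of deficit $\ge \delta'$ for $\delta'$ only a $(1+\smallO{1})$ factor larger (the largest jump for which the relevant expected counts stay above a big constant), are $\bigO{1}$ — and the number of levels needed to traverse $B$, because only a sub-polynomial multiplicative gain per level can be guaranteed there, is $\bigO{(\log\numberOfVertices)^{2/(3-\hyperbolicExponent)}}$ rather than $\bigO{\log\log\numberOfVertices}$. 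Summing the constant per-level cost over these levels and combining with Step~1 via the triangle inequality gives $\operatorname{diam} = \bigO{(\log\numberOfVertices)^{2/(3-\hyperbolicExponent)}}$; collecting the failure probabilities of all invoked statements keeps the exceptional probability at $\bigO{\numberOfVertices^{-3/2}}$.

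The main obstacle is the outer-band analysis in Step~2. Away from the boundary, edge densities are large enough that a first-moment-plus-union-bound argument works and path lengths are only doubly logarithmic; but for the low-degree vertices the expected number of ``useful'' inward neighbours is merely polylogarithmic (or smaller), so one cannot union-bound vertex-by-vertex and must instead understand the component structure of the induced boundary subgraph — how its pieces glue along the angular circle and how quickly they funnel toward higher-degree vertices. It is precisely the arithmetic of that recursion, governed by $\alpha \in (1/2,1)$, that manufactures the exponent $2/(3-\hyperbolicExponent)$; this is both the technically heaviest part and the reason the resulting bound comes out super-logarithmic.
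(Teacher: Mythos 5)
This statement is not proved in the paper: it is quoted verbatim from Friedrich and Krohmer, so your proposal has to be measured against their argument. Your skeleton does match it: the clique $\{v : r(v)\le R/2\}$ of expected size $\Theta(\numberOfVertices^{(3-\hyperbolicExponent)/2})$ as the common target, a doubly\-/logarithmic greedy descent for every vertex whose radial deficit $R-r(v)$ exceeds a suitable threshold (your neighbour\-/count computation and the multiplicative factor $1/(\hyperbolicExponent-2)>1$ per hop are correct), and a separate treatment of the boundary band as the sole source of the exponent $2/(3-\hyperbolicExponent)$.

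The gap is precisely that boundary band, which is the entire content of the theorem, and your sketch of it is both incomplete and wrong as stated. First, the intermediate claims fail: the induced subgraph on $\{v : R-r(v)\ge\delta\}$ for $\delta$ near $0$ is essentially the whole giant component, so its components cannot have $\bigO{1}$ diameter; and no local statement of the form \enquote{every component at level $\delta$ is within $\bigO{1}$ of level $\delta'$} can hold, because a vertex near the boundary may have all its neighbours at even larger radius (pendant chains hugging the boundary occur with polynomially small, not superpolynomially small, probability --- which is also why your \enquote{regular event} cannot fail with probability $\numberOfVertices^{-\omega(1)}$: boxes near the boundary have $\bigO{1}$ expected occupancy and do not concentrate). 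Second, the proposed mechanism does not generate the exponent: climbing from deficit $\bigO{1}$ to $\Theta(\log\log\numberOfVertices)$ by multiplicative jumps of size $1+\varepsilon$ takes $\Theta\bigl(\varepsilon^{-1}\log\log\log\numberOfVertices\bigr)$ levels, and nothing in the sketch ties $\varepsilon^{-1}$ to $(\log\numberOfVertices)^{2/(3-\hyperbolicExponent)}$. The actual argument is global rather than level\-/by\-/level: a shortest path from a giant\-/component vertex to the inner clique stays in the outer band $\{r>R/2\}$ until its final vertex, so it suffices to bound the longest path confined to the outer band; a first\-/moment count over vertex sequences, integrating the adjacency condition over the radii, shows that the probability that a fixed vertex starts a $k$\-/vertex path inside the outer band is at most $\exp\bigl(-c\,k^{(3-\hyperbolicExponent)/2}\bigr)$, and this stretched exponential is what forces $k=\bigO{(\log\numberOfVertices)^{2/(3-\hyperbolicExponent)}}$ once one demands failure probability $\numberOfVertices^{-5/2}$ per vertex (whence the $1-\bigO{\numberOfVertices^{-3/2}}$ in the statement). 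Without that path\-/counting step the proposal does not establish the theorem.
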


\begin{theorem}[{\cite{friedrich2015cliques}}]\label{pre:hyperbolicClique}
Let $G$ be a hyperbolic random graph with $\numberOfVertices$ vertices that follows a power-law degree distribution with exponent $\hyperbolicExponent \in (2,3)$. Then the size of the largest clique of $G$ is in \bigTheta{n^{(3-\hyperbolicExponent)/2}} with high probability.
\end{theorem}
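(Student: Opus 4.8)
The plan is to prove matching lower and upper bounds on the clique number $\omega(G)$, working in the native polar representation of the model: $\numberOfVertices$ points on a disk of radius $R = 2\ln(\numberOfVertices/\nu)$ for a constant $\nu$, with angle uniform on $[0,2\pi)$ and radial coordinate drawn with density proportional to $\sinh(\alpha r)$ on $[0,R]$, where $\alpha = (\hyperbolicExponent-1)/2 \in (1/2,1)$, and with two points adjacent iff their hyperbolic distance is at most $R$. As usual it helps to argue first in the Poissonised model (a Poisson point process with intensity $\numberOfVertices$ times the vertex density) and transfer afterwards, since a high-probability statement on $\omega(G)$ is insensitive to this change.

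\textbf{Lower bound.} The ball $B_{R/2}$ of hyperbolic radius $R/2$ around the origin induces a clique, because by the triangle inequality any two of its points are within distance $R$ and hence adjacent. One then counts its vertices: a single vertex lies in $B_{R/2}$ (equivalently, has radial coordinate at most $R/2$) with probability $\frac{\cosh(\alpha R/2)-1}{\cosh(\alpha R)-1} = \bigTheta{\eulerE^{-\alpha R/2}} = \bigTheta{\numberOfVertices^{-\alpha}}$, so the number of vertices in $B_{R/2}$ has expectation $\bigTheta{\numberOfVertices^{1-\alpha}} = \bigTheta{\numberOfVertices^{(3-\hyperbolicExponent)/2}}$; since this is polynomial in $\numberOfVertices$, a Chernoff bound pins it to this order with high probability, giving $\omega(G) = \bigOmega{\numberOfVertices^{(3-\hyperbolicExponent)/2}}$.

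\textbf{Upper bound.} The heart is a geometric confinement lemma. Given a clique $C$, let $u \in C$ have the largest radial coordinate $r_u$. Every $x \in C$ has $r_x \le r_u$ and is adjacent to $u$, and the standard estimate for the hyperbolic distance converts adjacency with $u$ into $|\theta_x - \theta_u| \le \min\{\pi,\ 2\eulerE^{(R - r_x - r_u)/2}\}$ up to a $1+\smallO{1}$ factor. Thus $C$ is contained in the deterministic ``lune'' $L(\theta_u, r_u)$ of all disk points meeting these two constraints. Integrating the radial density against the angular half-width $\min\{\pi, 2\eulerE^{(R-r-\rho)/2}\}$ over $r \in [0,\rho]$ shows that the expected number of vertices in $L(\theta,\rho)$ is $\bigO{\numberOfVertices^{(3-\hyperbolicExponent)/2}}$ \emph{uniformly in $(\theta,\rho)$}: the maximiser is $\rho \approx R/2$, while for $\rho < R/2$ the lune collapses to the sparse inner disc $\{r \le \rho\}$ and for $\rho > R/2$ the angular taper together with $\alpha < 1$ makes the count decrease. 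To make this uniform bound rigorous one discretises: take a polynomial-size net of pairs $(\theta,\rho)$ so that every actual lune $L(\theta_u, r_u)$ is contained in a net lune enlarged only by a constant factor, apply a Chernoff bound to the vertex count of each net lune — its mean is $\bigO{\numberOfVertices^{(3-\hyperbolicExponent)/2}} = \smallOmega{\log\numberOfVertices}$, hence its upper tail is superpolynomially small — and union bound over the net. With high probability no lune, and therefore no clique, holds more than $\bigO{\numberOfVertices^{(3-\hyperbolicExponent)/2}}$ vertices, so $\omega(G) = \bigO{\numberOfVertices^{(3-\hyperbolicExponent)/2}}$. Matching the leading constants of the two bounds is then a matter of comparing the constant from the $B_{R/2}$ count with the one from the lune integral at $\rho = R/2$.

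\textbf{Main obstacle.} The upper bound is the delicate part, specifically the twin demands that the confinement region be geometrically tight and that the family one must control be only polynomially large. A crude confinement — e.g.\ to a full angular window of the correct width but ignoring the radial taper — inflates the count to order $\numberOfVertices$ instead of $\numberOfVertices^{(3-\hyperbolicExponent)/2}$, so one genuinely needs the $\min\{\pi, 2\eulerE^{(R-r-\rho)/2}\}$ shape and the case split it forces at $\rho = R/2$; and the net of $(\theta,\rho)$-pairs must be fine enough that enlargement costs only a constant factor, yet coarse enough that the union bound survives against the superpolynomially small Chernoff tails. Carrying out the hyperbolic-distance estimate with enough precision near the angular cap, and absorbing the Binomial-versus-Poisson discrepancy in the counts, are the routine-but-careful remainders of this step.
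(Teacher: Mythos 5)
This statement is imported from \textcite{friedrich2015cliques} and the paper gives no proof of it, so there is nothing internal to compare against; your argument is essentially the standard one from that reference (the central ball $B_{R/2}$ is a clique of the right size for the lower bound; geometric confinement of a clique to a lune around its outermost vertex, plus a net, Chernoff, and a union bound for the upper bound), and the lune integral does peak at $\rho\approx R/2$ with value $\bigTheta{\numberOfVertices^{1-\alpha}}=\bigTheta{\numberOfVertices^{(3-\hyperbolicExponent)/2}}$ as you claim. One small imprecision: the $1+\smallO{1}$ form of the angular threshold $2\eulerE^{(R-r_x-r_u)/2}$ is only valid when $r_x+r_u-R\to\infty$; near $r_x+r_u\approx R$ you should instead use an upper bound $K\eulerE^{(R-r_x-r_u)/2}$ for a fixed constant $K$ (which holds there, and costs only constants in the integral), but this does not affect the $\bigTheta{\cdot}$ conclusion.
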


We first use the poly-logarithmic diameter to show that the infection reaches the largest clique with a sufficient probability when the process starts with at least one infected vertex.

\begin{lemma}\label{lem:hyperbolicPath}
Let $G$ be a hyperbolic random graph with $\numberOfVertices$ vertices that follows a power-law degree distribution with exponent $\hyperbolicExponent \in (2,3)$, and let $\contactProcess$ be an SIRS process on $G$ with infection rate~$\infectionRate$ and with constant deimmunization rate \deimmunizationRate. Further, let~$\contactProcess$ start with at least one infected vertex in the giant component and no recovered vertices in the giant component.
If $\infectionRate \geq \infectionConstant \numberOfVertices^{(\hyperbolicExponent-3)/2}$ for a constant $\infectionConstant \in \R_{>0}$, then the probability that the infection reaches a state in which a vertex in the largest clique is infected is at least $\exp\!\big(\!-\!(\ln n)^{3/(3-\hyperbolicExponent)}\big)$ for sufficiently large~$\numberOfVertices$.
\end{lemma}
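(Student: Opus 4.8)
\emph{Proof proposal.}
The plan is to locate a short path in the giant component from an initially infected vertex to the largest clique and to lower-bound the probability that the infection travels along it. To this end I would first condition on the asymptotically almost sure event that $G$ is ``good'', meaning that its giant component has diameter $D \in \bigO{(\log\numberOfVertices)^{2/(3-\hyperbolicExponent)}}$ (\Cref{pre:hyperbolicDiameter}) and that its largest clique has size $\bigTheta{\numberOfVertices^{(3-\hyperbolicExponent)/2}}$ (\Cref{pre:hyperbolicClique}); since the latter is $\smallOmega{\log\numberOfVertices}$, the clique --- being connected and polynomially large --- lies inside the giant component, the only polynomially large component. I would then fix an infected vertex $v_0$ in the giant component and a shortest path $v_0 = w_0, w_1, \dots, w_\ell = u$ from $v_0$ to some clique vertex $u$, so that $\ell \le D$. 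As the process starts with no recovered vertices in the giant component, after truncating the path at its last initially infected vertex we may assume that $w_0$ is infected and $w_1, \dots, w_\ell$ are susceptible at time $0$.

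Next I would bound from below the probability that the infection cascades along $w_0, \dots, w_\ell$. The key estimate is an exponential race: while $w_i$ is infected, the infection clock of the edge $\{w_i, w_{i+1}\}$ triggers at rate $\infectionRate$ while the recovery clock of $w_i$ triggers at rate $1$, so $w_i$ infects the susceptible $w_{i+1}$ before recovering with probability $\infectionRate/(\infectionRate+1)$; and if $w_{i+1}$ is infected even earlier through some other vertex, this only advances the cascade. Iterating this along the $\ell$ edges of the path --- formally, defining the stopping times at which the path vertices successively become infected and appealing to the strong Markov property at each --- gives that a clique vertex becomes infected with probability at least $(\infectionRate/(\infectionRate+1))^{\ell}$. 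I expect the main obstacle to be making this step fully rigorous: one has to control the states of the not-yet-reached path vertices $w_{i+2}, \dots, w_\ell$ while the cascade is in progress, for instance by restricting attention to the sub-process in which infections enter path vertices only along path edges; the probabilistic content itself is just the elementary race above.

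It remains to plug in the parameters. Since $\infectionRate \ge \infectionConstant \numberOfVertices^{(\hyperbolicExponent-3)/2}$ tends to $0$, for sufficiently large $\numberOfVertices$ we have $\ln\bigl(\infectionRate/(\infectionRate+1)\bigr) = \ln\infectionRate - \ln(1+\infectionRate) \ge \ln\infectionRate - 1 \ge -\frac{3-\hyperbolicExponent}{2}\ln\numberOfVertices - C_1$ for a constant $C_1 \in \R_{>0}$. Using that $q \defeq \ln\bigl(\infectionRate/(\infectionRate+1)\bigr)$ is negative and $\ell \le D \le C_2(\log\numberOfVertices)^{2/(3-\hyperbolicExponent)}$, this yields
\begin{align*}
    \bigl(\infectionRate/(\infectionRate+1)\bigr)^{\ell} &\ge \exp\!\Bigl(-C_2(\log\numberOfVertices)^{2/(3-\hyperbolicExponent)}\Bigl(\frac{3-\hyperbolicExponent}{2}\ln\numberOfVertices + C_1\Bigr)\Bigr)\\
    &= \exp\!\Bigl(-\bigO{(\log\numberOfVertices)^{(5-\hyperbolicExponent)/(3-\hyperbolicExponent)}}\Bigr).
\end{align*}
Because $\hyperbolicExponent > 2$, the exponent $(5-\hyperbolicExponent)/(3-\hyperbolicExponent)$ is strictly smaller than $3/(3-\hyperbolicExponent)$, so for large $\numberOfVertices$ the quantity $(\ln\numberOfVertices)^{3/(3-\hyperbolicExponent)}$ dominates both this big-$\mathrm{O}$ term and the $\smallO{1}$ loss in the exponent incurred by conditioning on $G$ being good; putting everything together gives that a clique vertex becomes infected with probability at least $\exp\!\bigl(-(\ln\numberOfVertices)^{3/(3-\hyperbolicExponent)}\bigr)$ for sufficiently large $\numberOfVertices$, as claimed.
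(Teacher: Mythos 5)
Your proposal is correct and follows essentially the same route as the paper: fix an initially infected vertex in the giant component, take a shortest path (of length at most the polylogarithmic diameter from \Cref{pre:hyperbolicDiameter}) to the largest clique, lower-bound the per-edge cascade probability by $\infectionRate/(\infectionRate+1)$, and multiply along the path; the final comparison of exponents $(5-\hyperbolicExponent)/(3-\hyperbolicExponent) < 3/(3-\hyperbolicExponent)$ is also exactly the paper's. The only differences are cosmetic: you are somewhat more careful about conditioning on the a.a.s.\ structural events and about justifying that the clique lies in the giant component, and you make the strong-Markov bookkeeping for the cascade more explicit, whereas the paper states the chain of conditional probabilities more informally.
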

\begin{proof}
Let $v$ be a vertex that starts infected, and let $d$ be the shortest distance from $v$ to any vertex of the largest clique. Note that $d$ is bounded from above by the diameter of the giant component. Therefore, by \Cref{pre:hyperbolicDiameter}, there exists a constant $a \in \R_{>0}$ such that for sufficiently large $\numberOfVertices$ with a probability of at least $\frac{1}{2}$, it holds that $d\leq a (\ln \numberOfVertices)^{2/(3-\hyperbolicExponent)}$.

For all $i\in \N$, let~$E_i$ be the event that $\contactProcess$ reaches a state with an infected vertex that has a distance of $i$ to the largest clique. Consider for all $i \in \N_{<d}$ the probability $\Pr{E_i}[E_{i+1}]$. Each vertex with a distance of $i+1$ to the largest clique has a neighbor that has a distance of $i$ to the clique. With a probability of $\frac{\infectionRate}{1+\infectionRate}$, an infected vertex infects a specific neighbor before recovering. Therefore, $\Pr{E_i}[E_{i+1}] \geq \frac{\infectionRate}{1+\infectionRate} \geq \frac{\infectionConstant}{2} \numberOfVertices^{(\hyperbolicExponent-3)/(2)}$ for sufficiently large $\numberOfVertices$.

With a probability of at least $\frac{1}{2}$, it holds that $d\leq a (\ln \numberOfVertices)^{2/(3-\hyperbolicExponent)}$. This yields for sufficiently large~$\numberOfVertices$ that
\begin{align*}
    \Pr{E_0} &= \prod_{i=0}^{d-1}{\Pr{E_i}[E_{i+1}]}\\
    &\geq \prod_{i=0}^{d-1}{\frac{\infectionConstant}{2} \numberOfVertices^{\frac{\hyperbolicExponent-3}{2}}}\\
    &= \left(\frac{\infectionConstant}{2} \numberOfVertices^{\frac{\hyperbolicExponent-3}{2}}\right)^d\\
    &\geq \left(\frac{\infectionConstant}{2} \numberOfVertices^{\frac{\hyperbolicExponent-3}{2}}\right)^{a (\ln \numberOfVertices)^{\frac{2}{3-\hyperbolicExponent}}}\\
    &=\eulerE^{\frac{\hyperbolicExponent-3}{2}a(\ln n)^{\frac{5-\hyperbolicExponent}{3-\hyperbolicExponent}}+\ln(c/2)}\\
    &\geq \eulerE^{-(\ln n)^{\frac{3}{3-\hyperbolicExponent}}}.\qedhere
\end{align*}
\end{proof}

When the infection reaches the largest clique of a hyperbolic random graph, \Cref{thm:cliqueSIRS} yields an exponential expected survival time for a sufficiently large infection rate.

\HRGGraphs
\begin{proof}
Let $k$ be the size of the largest clique of $G$. By \Cref{pre:hyperbolicClique}, there exists a constant $a \in \R_{>0}$ such that with high probability it holds that $k \geq a n^{(3-\hyperbolicExponent)/2}$. Let $\infectionConstant = a^{-1}+1$ such that with high probability it holds that $\infectionRate \geq \frac{1+a}{k}$. Let $E$ be the event that there exists a configuration in which a vertex in the largest clique of $G$ is infected. By \Cref{lem:hyperbolicPath}, it holds that $\Pr{E} \geq \exp\!\big(\!-(\ln n)^{3/(3-\hyperbolicExponent)}\big)$ for sufficiently large $\numberOfVertices$. Note that a clique with $k$ vertices is an $(k,(1\pm k^{-1})k,(k-1)^{-1})$-expander. Hence, by \Cref{thm:cliqueSIRS}, it holds that $\E{T}[E]= 2^{\bigOmega{k}}$, as the infection survives that long on the clique alone after its first vertex gets infected.

By the law of total expectation and that with high probability $k \geq a n^{(3-\hyperbolicExponent)/2}$, we conclude
\begin{align*}
    \E{T} &\geq \Pr{E} \cdot \E{T}[E]\\
    &\geq \eulerE^{-(\ln n)^{\frac{3}{3-\hyperbolicExponent}}} \cdot 2^{\bigOmega{n^{(3-\hyperbolicExponent)/2}}}\\
    &= 2^{\bigOmega{n^{(3-\hyperbolicExponent)/2}}}.\qedhere
\end{align*}
\end{proof}

\section*{Acknowledgments}
We would like to thank Silvio Ferreira for bringing \cite{ferreira2016collective} to our attention and for insightful comments that helped us improve the expected survival time of \Cref{lem:starSurvival}.

Andreas Göbel was funded by the project PAGES (project No. 467516565) of the German Research Foundation (DFG).
This project has received funding from the European Union's Horizon 2020 research and innovation program under the Marie Skłodowska-Curie grant agreement No. 945298-ParisRegionFP.
This research was partially funded by the HPI Research School on Data Science and Engineering.

\printbibliography

\end{document}